\def\e{\varepsilon}
\newcommand{\s}{\ensuremath{\mathbb{S}}}
\newtheorem {theorem} {Theorem}
\newtheorem {proposition} [theorem]{Proposition}
\newtheorem {corollary} [theorem]{Corollary}
\newtheorem {lemma}  [theorem]{Lemma}
\newtheorem {remark} [theorem]{Remark}
\newtheorem {definition} [theorem]{Definition}
\newcommand{\R}{\mathbb{R}}
\newcommand{\D}{\ensuremath{\mathbb{D}}}
\tikzset{node distance=3cm, auto}
\begin{document}

\title[Invariant Algebraic Surfaces and Constrained Systems]
	{Invariant Algebraic Surfaces and Constrained Systems}
	
		\author[Paulo R. da Silva and O. H. Perez ]
	{ Paulo R. da Silva and Ot\'{a}vio H. Perez  }

	\address{  S\~{a}o Paulo State University (Unesp), Institute of Biosciences, Humanities and
		Exact Sciences. Rua C. Colombo, 2265, CEP 15054--000. S. J. Rio Preto, S\~ao Paulo,
		Brazil.}
	
	\email{paulo.r.silva@unesp.br}
	
	\email{otavioperez@hotmail.com}

	\thanks{ .}

	\subjclass[2010]{34C05, 34C45, 34D15, 93C70.}

	\keywords {Invariant Manifolds, Constrained Systems, Singular Perturbation. }
	\date{}
	\dedicatory{}
		\maketitle
	\begin{abstract}
	
We study flows of smooth vector fields $X$  over invariant surfaces $M$ which are levels of rational first integrals.
It leads us to  study constrained systems, that is, systems with impasses.
We identify a subset $\mathcal{I} \subset M$  which we call ``pseudo-impasse" set  and
analyze the flow of $X$ by points of $\mathcal{I}$. Systems well known in the literature exemplify our results: 
Lorenz, Chen, Falkner-Skan and Fisher-Kolmogorov.
We also study 1-parameter families of integrable systems and unfolding of minimal sets. Our main tool is 
the geometric singular perturbation theory.
		
	\end{abstract}

\section{Introduction}

Let $X:\R^3\rightarrow\R^3$  be a smooth vector field. A trajectory of $X$ is a smooth curve $\varphi(t)$ satisfying that 
$$\dot{\varphi}(t)=X(\varphi(t)),t\in I\subseteq \R.$$

We say that a smooth function $H:\R^3\rightarrow\R$ is a \emph{first integral} of $X$ if it satisfies $\langle \nabla H, X \rangle = 0$. 
It means that  $H(\varphi(t))=H(\varphi(0))$, for any $t\in I$.

Our main goal is to describe the flow of $X$ on invariant algebraic surfaces, that is  on $M = H^{-1}(0)$ 
with $H$ being a polynomial function. 

Here we focus our attention on surfaces $M = H^{-1}(0)$ with 
$H(x,y,z) = f(x,y)z - g(x,y),$
where $f,g$ are polynomials. Considering these surfaces is not very restrictive. In fact, as we will see in the examples below, many surfaces, including singular parts or being disconnected, can be represented in this way.

The surface $M$ can be written as the disjoint union of two subsets: $\mathcal{G}_{M}$ and 
$\mathcal{I}_{M}$. The first one is the graphic of  $z= (g/f)(x,y)$, and the second one represents the 
subset of $M$ that cannot be written as a graphic. The set $\mathcal{I}_{M}$ is called \emph{pseudo-impasse set}. 
Geometrically, $\mathcal{I}_{M}\subset M$ is a set of lines which are parallel to the $z$-axis. 
See \textit{Proposition \ref{prop-superficie}}, section 3.

Before we present our results, let's start with an example to indicate what kind of problem we are interested in.

\noindent \textbf{Example.} Consider the vector field $X(x,y,z)=(2y,2xz,(x^{2} - 1)z - (y^{2} - 1))$
where $M = H^{-1}(0)$ defines a smooth algebraic invariant surface, with $H(x,y,z) = (x^{2} - 1)z - (y^{2} - 1)$. 
We can write $M = \mathcal{G}_{M}\cup\mathcal{I}_{M}$, where $\mathcal{G}_{M} = \{(x,y,z)\in M | z  = {(y^{2} - 1)}/{(x^{2} - 1)}, x \neq\pm1\}$ and
$\mathcal{I}_{M} = \{(x,y,z)\in M | y^{2} - 1 = 0 = x^{2} - 1, z\in\mathbb{R}\}.$
$\mathcal{G}_{M}$ is a graphic  and the pseudo impasse  $\mathcal{I}_{M}$ is a set of four lines 
 that are ortogonally projected on the $xy$-plane. See figure \ref{fig-exemplo-intro}.
The flow of $X$ on $\mathcal{G}_{M}$ is described by
\begin{equation}\label{exe-intro-impasse}
\dot{x} = 2y, \ (x^{2} - 1)\dot{y} = 2x(y^{2} - 1).
\end{equation}
Additional effort is needed to describe the flow in $\mathcal{I}_{M}$. 
The projections of $\mathcal{I}_{M}$ on $xy$-plane are hyperbolic equilibrium points of the system
$\dot{x} = (x^{2} - 1)2y,$ $\dot{y} = 2x(y^{2} - 1).$
It is easy to see that $\mathcal{I}_{M}$ does not contain any 
equilibrium point and the four lines are not invariant.

\begin{figure}[h!]\label{fig-exemplo-intro}
	\center{\includegraphics[width=0.35\textwidth]{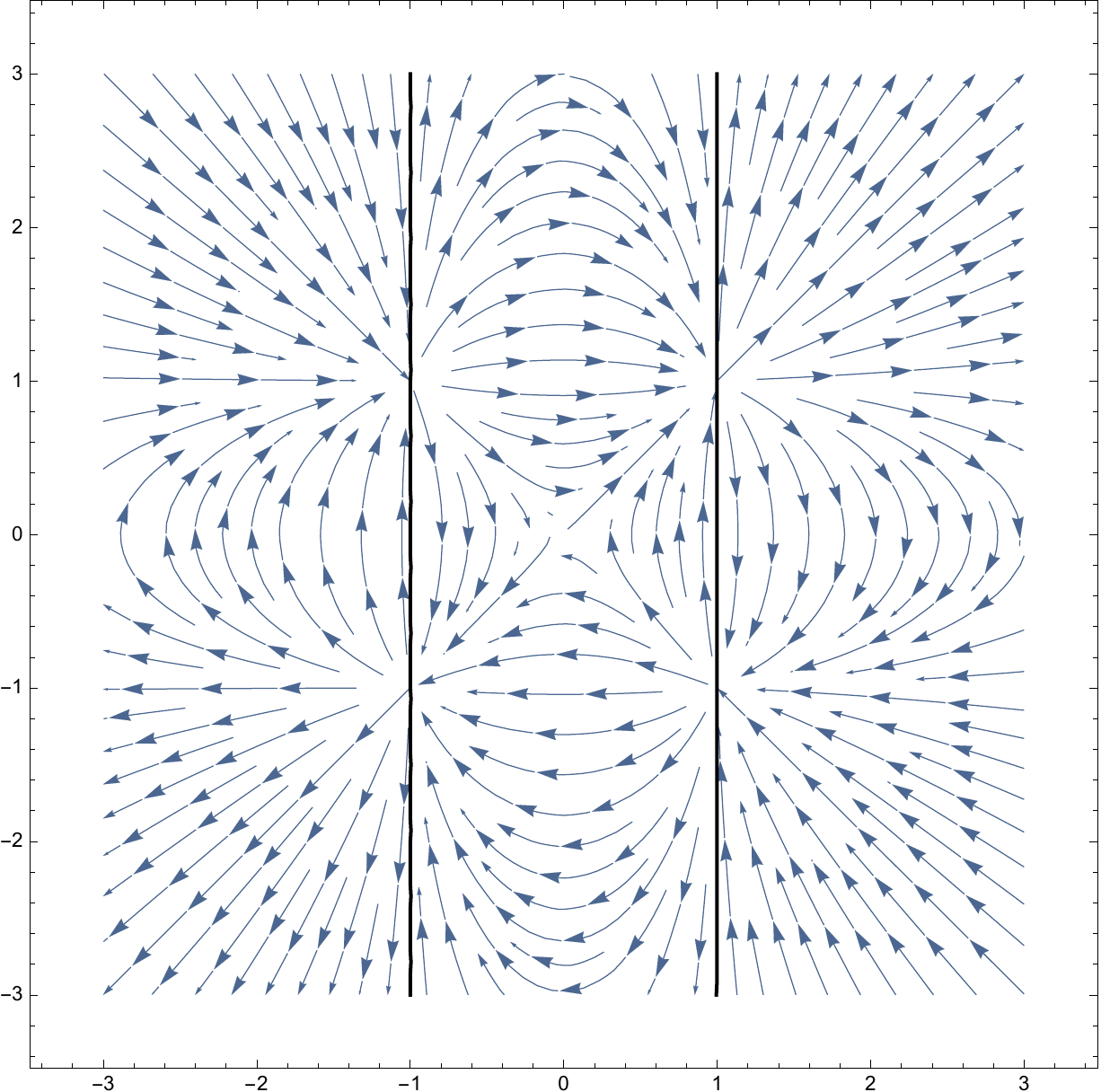}\hspace{0.5cm}\includegraphics[width=0.4\textwidth]{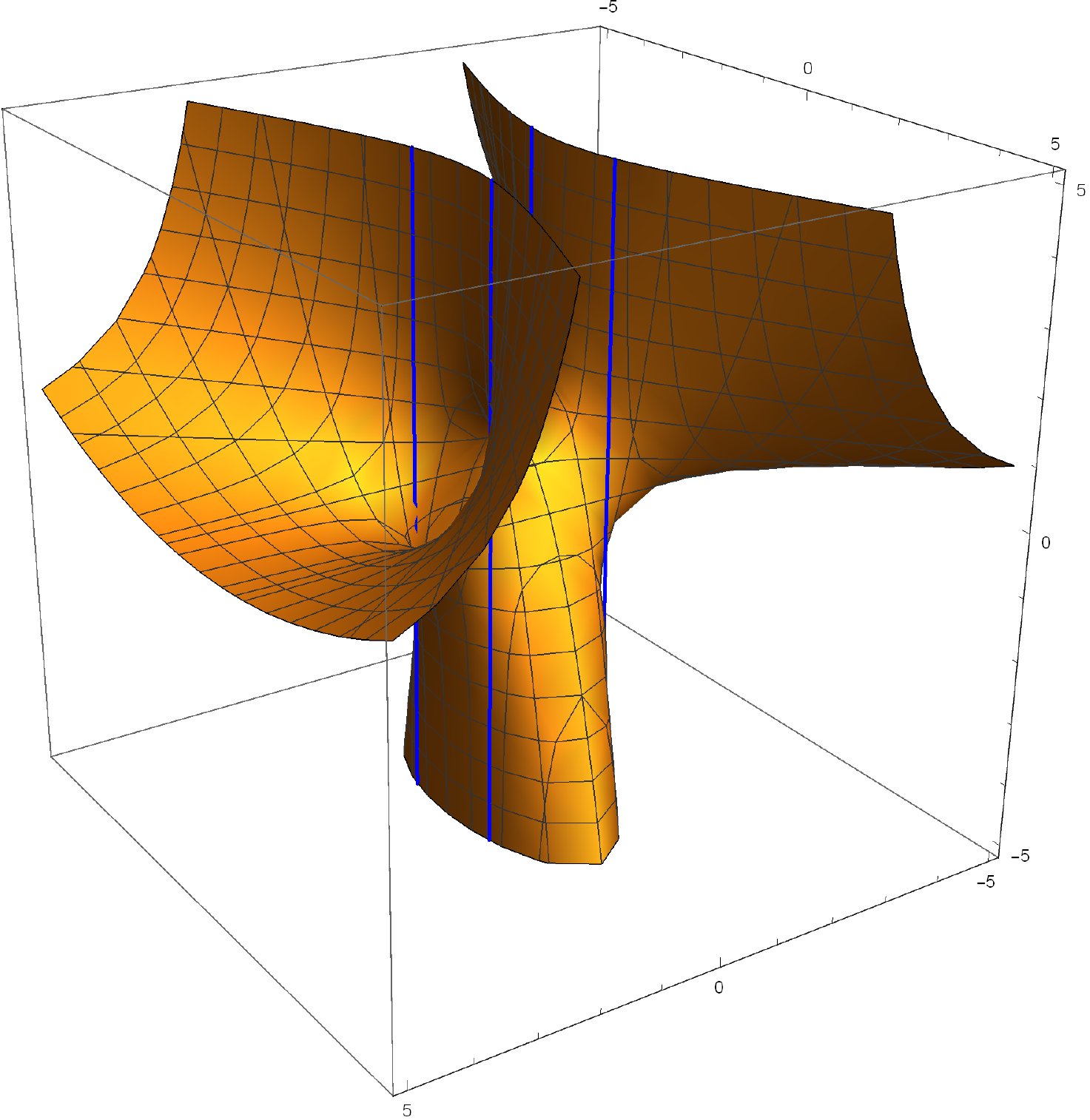}}
	\caption{\small{Phase portrait of   $X(x,y,z)=(2y,2xz,(x^{2} - 1)z - (y^{2} - 1))$ (left) and surface $M$ (right). 
			The pseudo impasse set $\mathcal{I}_{M}$ is denoted in blue.}}
\end{figure}

Systems written as \eqref{exe-intro-impasse} are known as \emph{constrained systems} (or \emph{impasse systems}).  
Constrained systems have been widely studied in the literature. In \cite{Zhito} the author classified normal forms 
 in $\mathbb{R}^{2}$ and in \cite{SotoZhito} the authors gave normal forms defined in $\mathbb{R}^{n}$, $n\geq 3$. 
 Both references assume that the impasse manifold is smooth.  Applications in electrical circuits can be found in \cite{Smale}.\\
 
Below we briefly list some results that we have proved about flows and impasses.

\begin{itemize}
	\item The flow of $X$ on the algebraic invariant manifold $M$  is determined by a constrained system 
	$A(x,y)(\dot{x},\dot{y}) = F(x,y),$ $ (x,y)\in\mathbb{R}^{2}$ (see section 2 for a precise definition). 
	The projection of $\mathcal{I}_{M}$ on $\mathbb{R}^{2}$ lies on the impasse set 
	$\mathcal{I}=\{(x,y):\det A(x,y)=0\}$ and it is a set of equilibrium points of the adjoint 
	vector field $A^{*}F$.  See \textit{Theorem \ref{teo-fluxo-impasse}}, section 3.
	
	\item Let $(x,y)\in\mathbb{R}^{2}$ be the projection of a line $r\subset\mathcal{I}_{M}$. Then 
	$(x,y)\in \mathcal{I}$ is a hyperbolic equilibrium point for the adjoint vector field $A^{*}F$  if, 
	and only if, $(x,y)$ is a hyperbolic node. Moreover, $r$ does not contain equilibrium points of 
	$X$, $r$ is transversal to the flow  and $r$ does not contain any singular points of the surface $M$. 
	See \textit{Proposition \ref{prop-singularidades-impasses}} and \textit{Theorem \ref{teo-fluxo-2-hiperbolico}}, 
	section 3.  If $(x,y)\in \mathcal{I}$ is a non hyperbolic equilibrium point then, under some conditions, $r$  
	intersects the singular part of $M$ or $r$ is invariant by the flow. 
	See \textit{Theorem \ref{teo-fluxo-nao-hip-2}}, section 3.
	
\end{itemize}

In section 4 we exemplify our results with well-known systems in the literature, for example, 
Falkner-Skan Equation, Lorenz System and Chen System. We discuss how the study of flows 
on invariant surfaces by means of a constrained system can be extended in higher dimensions.\\

In the second part of this paper we consider 1-parameter families of first integrals and unfoldings of 
minimal sets. In our approach, singular perturbation theory \cite{Fenichel} is the main tool. More 
precisely, we prove that equilibrium points and periodic orbits of  smooth system which are 
contained in  algebraic invariant surfaces persist under small perturbations.\\

\noindent \textbf{Example.} Let $H_{\e}:\mathbb{R}^{3}\rightarrow\mathbb{R}$ be a family 
of smooth functions given by $H_{\e}(x,y,z) = x - y^{2} - \e z$. For each $\varepsilon > 0$, $H_{\e}$ 
is a first integral of
$X_{\e}(x,y,z)=(2y(y + z) - \e(y - z), y + z,y - z).$ If $\varphi_{0}$ is a trajectory of $X_{0}$  
in the level $H_{0}(x,y,z) = 0$, then $\varphi_{0}$ is a solution of
\begin{equation}\label{exe-intro-spp2}
0 = x - y^{2},\ \dot{y} = y + z,\ \dot{z} = y - z.
\end{equation}

System \eqref{exe-intro-spp2} is an algebraic differential equation and it describes the 
slow flow on the slow manifold of a singular perturbation problem. Furthermore, 
\textit{Proposition \ref{main-theorem}}, section 5, says that $\varphi_{0}$ is a
 trajectory of $X_{0}$  if, and only if, $\varphi_{0}$ is a solution of \eqref{exe-intro-spp2}. 
 This allows us to study the flow of $X_{\e}$ for $\e > 0$. More precisely, we 
 study the flow of $X_{\e}$ in the levels $H_{\varepsilon}(x,y,z) = 0$ for $\e > 0$ using 
 singular perturbation problems. For this purpose, Fenichel Theorem is our main tool.
Concerning to 1--parameter families of smooth vector fields we prove:\\

\begin{itemize}
  \item Let $H_{\e}$ be a family of first integrals of  $n$-dimensional smooth vector fields $X_{\varepsilon}$. 
  The  equilibria of $X_{\e}$ on $\{H_{\e} = 0\}$ are equilibria of a singular perturbation problem. 
  If $p_{0}\in \{H_{0} = 0\}$ is an equilibrium point of $X_{0}$,
   then there exists a sequence of equilibrium points $p_{\e}$  of $X_{\varepsilon}$,  satisfying 
   that  $p_{\e}\rightarrow p_0$ and $p_{\varepsilon}\in \{H_{\varepsilon} = 0\}$. 
   See \textit{Proposition \ref{prop-spp-equilibrio}}, section 5.
  
  \item Under some conditions, periodic orbits of $X_{0}$ contained in $\{H_{0} = 0\}$
   persist under small perturbations. See \textit{Proposition \ref{prop-spp-periodico}}, section 5.
\end{itemize}

The paper is organized as follows. In section \ref{sec-definitions} we present basic concepts 
and definitions concerning constrained systems and singular perturbation theory. 
We start section \ref{sec-constrained} studying some geometric properties of the 
algebraic invariant surface, and then we present results that relate the flows on $M$ 
to constrained systems. We exemplify these results in section \ref{sec-examples} 
with well-known systems in the literature, such as Falkner-Skan, Lorenz and Chen systems. 
We also discuss how the problem of describe flows on invariant surfaces by means of 
constrained systems extends in higher dimensions. Finally, in section \ref{sec-spp} 
we deal with families of first integrals and slow-fast systems.

\section{Preliminaries on the Geometric  Singular Perturbation Theory and Impasses}\label{sec-definitions}

In this section we present basic concepts and definitions concerning constrained systems and singular 
perturbation theory. We also refer \cite{SotoZhito, Zhito} and \cite{Fenichel} for 
an introduction of constrained systems and singular perturbation theory, respectively.

\subsection{Constrained systems}

A \emph{constrained system} (or \emph{impasse system}) is given by
\begin{equation}\label{impasse}
A(\mathbf{x})\dot{\mathbf{x}} = F(\mathbf{x}),\quad \mathbf{x}\in\R^n
\end{equation}
where   $F:\R^n\rightarrow\R^n$ is a smooth vector field and $A(\mathbf{x})$ is a square matrix of order $n$ whose entries
$a_{ij}(\mathbf{x})$  smoothly depend on $\mathbf{x}$. This kind of system 
generalizes vector fields because at the points where $\det A(\mathbf{x}) \neq 0$ 
we can rewrite \eqref{impasse} as
$\dot{\mathbf{x}} = A^{-1}(\mathbf{x})F(\mathbf{x}).$
On the other hand, at the points where $\det A(\mathbf{x}) = 0$ (called \emph{impasse points}) 
we cannot assure the existence and/or uniqueness of the solutions.
Another particularity of \eqref{impasse} is the existence of the 
\emph{impasse manifold}, defined by
\begin{equation}\label{impasse-manifold}
\mathcal{I} = \{\mathbf{x}\in\mathbb{R}^{n} | \det A(\mathbf{x}) = 0\}.
\end{equation}

We can draw the phase portrait of \eqref{impasse} as follows. We relate the system \eqref{impasse} 
to the vector field $A^{*}F$, where $A^{*}$ is the {adjoint matrix of $A$} characterized by 
$AA^{*} = A^{*} A = \det (A)I$. Thus, the phase portrait of \eqref{impasse} can be seen as 
the phase portrait of the vector field $A^{*}F$ by removing the impasse points from its orbits 
and inverting its orientation where $\det A < 0$. As in \cite{CardinSilvaTeixeira}, the vector 
field $A^{*}F$ will be called {adjoint vector field}.

It can be found normal forms for constrained systems in $\mathbb{R}^{2}$ and $\mathbb{R}^{n}$ 
with $n\geq 3$ in \cite{Zhito} and \cite{SotoZhito}, respectively. In both references, 
the authors studied the dynamics of \eqref{impasse} in the neighborhood of a {regular point} 
$\mathbf{x}_{0}\in \mathcal{I}$, that is, $d\big{(}\det A(\mathbf{x})\big{)}(\mathbf{x}_{0}) \neq 0$. 
Moreover, they adopted the hypothesis that the trajectories of the system \eqref{impasse} 
either do not intercept $\mathcal{I}$, or intercept $\mathcal{I}$ in a finite number of isolated points.

Let $\mathbf{x}_{0}\in \mathcal{I}$ be a regular impasse point and consider the following conditions.
\begin{itemize}
	\item[(A)] The vector space $\ker A(\mathbf{x}_{0})$ is transversal to $\mathcal{I}$.
	\item[(B)] The vector $F(\mathbf{x}_{0})$ does not belong to the range of $A(\mathbf{x}_{0})$.
\end{itemize}

By linear algebra, we know that Im$A = \ker A^{*}$ and $\ker A = $ Im$A^{*}$. Therefore, 
condition {(A)} means that the vector field $A^{*}F$ is transversal to $\mathcal{I}$ at 
$\mathbf{x}_{0}$ and condition {(B)} means that $\mathbf{x}_{0}$ is not an equilibrium point of $A^{*}F$.

\begin{definition}\label{def-impasse-singularidades}
	Let $\mathbf{x}_{0}\in \mathcal{I}$ be a regular impasse point of \eqref{impasse}. 
	\begin{enumerate}
		\item $\mathbf{x}_{0}$ is non singular if $\mathbf{x}_{0}$ satisfies conditions 
		{(A)} and {(B)}.
		\item $\mathbf{x}_{0}$ is a K-singularity (kernel singularity) if $\mathbf{x}_{0}$ 
		satisfies {(B)} and does not satisfy {(A)}.
		\item $\mathbf{x}_{0}$ is an R-singularity (range singularity) if $\mathbf{x}_{0}$ 
		satisfies {(A)} and does not satisfy {(B)}.
		\item $\mathbf{x}_{0}$ is an RK-singularity (range-kernel singularity) if $\mathbf{x}_{0}$ 
		does not satisfies conditions {(A)} and {(B)}.
	\end{enumerate}
\end{definition}

\subsection{Slow--fast systems and Fenichel Theory}

A \emph{singularly perturbed system} is a system of the form
\begin{equation}\label{sis-slowfast}
\varepsilon\dot{\mathbf{x}} = f(\mathbf{x},\mathbf{y},\e), \ \dot{\mathbf{y}} = g(\mathbf{x},\mathbf{y},\e),
\end{equation}
where $f,g$ are smooth, $\mathbf{x}\in\mathbb{R}^{n}$, $\mathbf{y}\in\mathbb{R}^{m}$ and $\varepsilon > 0$ is small. 
The dot $\cdot$ denotes the derivative with respect to $t$.

The parameter $\varepsilon$ measures the variation rate of $\mathbf{x}$ and $\mathbf{y}$. When 
$\e = 0$, the system (\ref{sis-slowfast}) reduces to the differential-algebraic system
\begin{equation}\label{sis-lento}
0 = f(\mathbf{x},\mathbf{y},0), \ \dot{\mathbf{y}} = g(\mathbf{x},\mathbf{y},0).
\end{equation}

System (\ref{sis-lento}) is called \emph{reduced problem} or \emph{slow equation}. By taking 
$t = \e\tau$ in (\ref{sis-slowfast}), we obtain the system
\begin{equation}\label{sis-slowfast2}
\mathbf{x}' =  f(\mathbf{x},\mathbf{y},\e), \ \mathbf{y}' =  \e g(\mathbf{x},\mathbf{y},\e),
\end{equation}
where $'$ denotes the derivative with respect to $\tau$. In the limit $\e = 0$, we obtain the
 \emph{fast equation} (or \emph{layer problem}) given by
\begin{equation}\label{sis-rapido}
\mathbf{x}' =  f(\mathbf{x},\mathbf{y},0), \ \mathbf{y}' =  0.
\end{equation}

System \eqref{sis-rapido} is reduced to $n$ differential equation with respect to the fast 
variable $\mathbf{x}$, which depends on the slow variable $\mathbf{y}$ as a parameter. For $\e>0$, 
systems (\ref{sis-slowfast}) and (\ref{sis-slowfast2}) are equivalents.

The set
\begin{equation}\label{slowmani}
S_{0} = \{(\mathbf{x},\mathbf{y})\in\mathbb{R}^{n}\times\mathbb{R}^{m}|f(\mathbf{x},\mathbf{y},0) = 0\}
\end{equation}
is the phase space of (\ref{sis-lento}) and it is known in the literature as \emph{critical manifold} 
or \emph{slow manifold}. Notice that $S_{0}$ is the set of equilibrium points of (\ref{sis-rapido}).

We can interpret the phase portrait (\ref{sis-slowfast}) and (\ref{sis-slowfast2}) when $\e$ is 
close to $0$ as follows. A point outside $S_{0}$ moves from a stable fast fiber according to the 
dynamics of (\ref{sis-rapido}), until it reaches a stable branch of $S_{0}$. Then, the dynamics 
change to (\ref{sis-lento}). If the corresponding solution reaches a singularity or a bifurcation 
point (where $S_{0}$ loses stability), thus the dynamics changes to (\ref{sis-rapido}).\\

A point $p = (\mathbf{x},\mathbf{y})\in S_{0}$ is \emph{normally hyperbolic} if 
$f_\mathbf{x} (p, 0)$ is a matrix whose all 
eigenvalues have nonzero real parts being $k^s$ eigenvalues with negative real parts and $k^u$ eigenvalues with 
positive real parts. The set of all normally hyperbolic points of $S_{0}$ is denoted by $\mathcal{NH}(S_{0})$.\\

The following theorem is one of the most important results of singular perturbation theory and 
it is due to Fenichel. 
Such result describes how is the flow of system \eqref{sis-slowfast} for $\e$ is sufficiently small. 
See \cite{Fenichel} for details.

\begin{theorem}\label{teo-Fenichel}
Let $N_{0}\subset \mathcal{NH}(S_{0})$ be a $j$-dimensional normally hyperbolic compact 
submanifold of $S_{0}$ for \eqref{sis-lento}, with a $(j + j^{s})$-dimensional local stable 
manifold $W^{s}$ and a $(j + j^{u})$-dimensional local unstable manifold $W^{u}$. Suppose 
that system \eqref{sis-slowfast} satisfies $f,g\in C^{r}$. Then for $\varepsilon > 0$ 
sufficiently small the following statements are true.
	\begin{itemize}
		\item[F1.] There exists a $C^{r-1}$ family of compact locally invariant manifolds 
		$\{N_{\varepsilon}\}_{\varepsilon}$ of \eqref{sis-slowfast} converging to $N_{0}$, 
		according Hausdorff distance. Moreover, $N_{\varepsilon}$ is diffeomorphic to $N_{0}$.
		\item[F2.] There exist $C^{r-1}$ families of $(j +j^s + k^s)$-dimensional and 
		$(j +j^u +k^u)$-dimensional manifolds $N^{s}_{\varepsilon}$ and $N^{u}_{\varepsilon}$, 
		respectively, such that $N^{s}_{\varepsilon}$ and $N^{u}_{\varepsilon}$ are 
		the local stable and unstable manifolds of $N_{\varepsilon}$.
	\end{itemize}
\end{theorem}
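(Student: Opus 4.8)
The plan is to recognize Fenichel's theorem as a manifestation of the persistence theorem for normally hyperbolic invariant manifolds (NHIMs) of Hirsch--Pugh--Shub, applied to the layer problem \eqref{sis-rapido} with $\e$ treated as a parameter. The starting observation is that the critical manifold $S_{0}$ is precisely the set of equilibria of the fast flow \eqref{sis-rapido}. Hence any compact piece $N_{0}\subset\mathcal{NH}(S_{0})$ is an invariant manifold of \eqref{sis-rapido} on which the flow is trivial, so that all tangential Lyapunov exponents vanish. The normal hyperbolicity hypothesis, namely that $f_{\mathbf{x}}(p,0)$ has no eigenvalue on the imaginary axis, with $k^{s}$ stable and $k^{u}$ unstable directions, then supplies an invariant splitting of the normal bundle into fast stable and unstable subbundles with contraction and expansion rates bounded away from zero. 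Because the tangential rates are zero, this makes $N_{0}$ $r$-normally hyperbolic for every $r$, so the hypotheses of the abstract persistence theorem are met for the $C^{r}$ regularity assumed.

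Next I would turn $\e$ into a dynamical variable by appending $\e' = 0$ to \eqref{sis-slowfast2}, working in the extended phase space $\mathbb{R}^{n}\times\mathbb{R}^{m}\times\mathbb{R}$. In this extended system the slice $\{\e = 0\}$ is invariant and carries the layer problem, so $N_{0}\times\{0\}$ is an invariant, normally hyperbolic manifold whose only neutral transverse direction is $\partial_{\e}$. Invoking the parametrized version of the NHIM persistence theorem (equivalently, a center-manifold reduction along the invariant parameter direction $\e$), I obtain for each small $\e > 0$ a locally invariant manifold $N_{\e}$ of \eqref{sis-slowfast2}, depending $C^{r-1}$ on $\e$, diffeomorphic to $N_{0}$, and converging to $N_{0}$ in Hausdorff distance. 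Since \eqref{sis-slowfast} and \eqref{sis-slowfast2} share orbits for $\e > 0$, this yields assertion F1, and the same theorem produces the persistence of the fast stable and unstable fibrations of $N_{0}$, giving fast stable/unstable manifolds of $N_{\e}$ of fibre dimensions $k^{s}$ and $k^{u}$.

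To reach F2 I would assemble the full stable and unstable manifolds by combining the fast hyperbolic directions transverse to $S_{0}$ with the slow hyperbolic directions of $N_{0}$ inside $S_{0}$. Concretely, the step above first persists the whole critical manifold $S_{0}$ to an invariant manifold $S_{\e}$ carrying its fast stable and unstable fibrations (the $k^{s}$- and $k^{u}$-dimensional fibres). On $S_{\e}$ the induced flow is an $O(\e)$ perturbation of the slow flow \eqref{sis-lento}, for which $N_{0}$ is by hypothesis normally hyperbolic with stable and unstable manifolds $W^{s}$ and $W^{u}$ of dimensions $j + j^{s}$ and $j + j^{u}$. Applying NHIM persistence a second time, now within the $m$-dimensional manifold $S_{\e}$, persists $N_{0}$ to $N_{\e}\subset S_{\e}$ together with its slow stable and unstable manifolds of the same dimensions. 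Saturating these slow manifolds by the fast stable and unstable fibres over $S_{\e}$ produces $N^{s}_{\e}$ and $N^{u}_{\e}$, of dimensions $j + j^{s} + k^{s}$ and $j + j^{u} + k^{u}$ respectively, as required.

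The main obstacle, and the point where care is essential, is the failure of strict normal hyperbolicity in the extended system caused by the neutral $\e$-direction: persistence cannot be read off from the naive NHIM theorem but must go through its parametrized form, or equivalently through a Lyapunov--Perron / graph-transform fixed-point argument in which $\e$ enters only as a parameter and the contraction constant is governed by the spectral gap from normal hyperbolicity. A secondary technical point is the two-scale assembly of $N^{s}_{\e}$ and $N^{u}_{\e}$: one must verify that the fast fibration of $S_{\e}$ and the slow invariant foliation of $N_{\e}$ glue together into genuine $C^{r-1}$ invariant manifolds of the advertised dimensions, and that the passage from $C^{r}$ to $C^{r-1}$ is exactly the loss of derivative incurred by the abstract theorem. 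I would not reprove the abstract persistence theorem, citing \cite{Fenichel} for it, and would concentrate the argument on verifying its hypotheses and on the dimension bookkeeping.
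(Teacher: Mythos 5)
The paper does not actually prove this theorem: it is quoted as Fenichel's classical result and the reader is referred to \cite{Fenichel}, so there is no in-paper argument to compare yours against. Your sketch follows the standard modern route (essentially Fenichel's own, via the Hirsch--Pugh--Shub persistence machinery): append $\e'=0$, observe that the critical manifold consists of equilibria of the layer problem so that all tangential rates vanish and the spectral hypothesis on $f_{\mathbf{x}}$ yields $r$-normal hyperbolicity for every $r$, and invoke persistence. That is the right strategy, and your two-step assembly of $N^{s}_{\e}$ and $N^{u}_{\e}$ (fast fibration over $S_{\e}$ combined with the slow stable/unstable manifolds of $N_{\e}$ inside $S_{\e}$) is the correct bookkeeping for F2.

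One step as written is wrong, although your third paragraph repairs it. You claim that $N_{0}\times\{0\}$ is normally hyperbolic in the extended layer problem ``whose only neutral transverse direction is $\partial_{\e}$.'' Unless $j=m$, this is false: every point of $S_{0}$ is an equilibrium of \eqref{sis-rapido}, so the $m-j$ directions tangent to $S_{0}$ but transverse to $N_{0}$ --- in particular the $j^{s}+j^{u}$ directions in which $N_{0}$ is hyperbolic only for the \emph{slow} flow --- are also neutral for the fast flow; that hyperbolicity is of order $\e$ on the fast time scale and invisible at $\e=0$. Consequently a single application of the persistence theorem to $N_{0}\times\{0\}$ does not directly produce $N_{\e}$ together with stable and unstable manifolds of the advertised dimensions. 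The object whose only extra neutral directions are $\partial_{\e}$ and the tangent directions is a compact piece of $S_{0}\times\{0\}$ itself; you must persist that first, then pass to the slow time scale and run the persistence argument a second time for the reduced flow on $S_{\e}$, exactly as you do afterwards. If you delete the premature claim and let the two-step argument carry both F1 and F2, the sketch is the standard proof, modulo the deferred abstract persistence theorem and the genuinely delicate smoothness in $\e$ down to $\e=0$, for which citing \cite{Fenichel} is appropriate.
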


\subsection{Catastrophes as invariant surfaces}

The slow flow \eqref{sis-lento} of a singular perturbation problem, with $(x,y,z)\in\R^3$, 
is given by a constrained system
\begin{equation}\label{potential-equation}
0 =  V_x(x,y,z), \ \dot{y} = \beta(x,y,z), \ \dot{z} = \gamma(x,y,z),
\end{equation}
where $V$ is the \emph{potential function}. According Takens \cite{Takens}, under
 topological equivalence, there are 12 normal forms of generic constrained differential
equations. 
  \begin{center}
  \begin{tiny}
  \begin{tabular}{|c|c|c|}
    \hline
    $V(x,y,z)$ & $X(x,y,z)$ & Type\\
    \hline
    $\frac{x^{2}}{2}$ 
     & $ (0,1,0)      $ & Flow-box \\
     & $ (0,y,z)$   & Source \\
     & $  (0,y,-z)$ & Saddle \\
     & $       (0,-y,-z)$ & Sink \\
    \hline
     $\frac{x^{3}}{3} + yx$ 
      & $        (0,1,0)$ & Flow-box 1\\
      & $      (0,-1,0)$ & Flow-box 2\\
      & $       (0,3x + z,1)$ & Source \\
      & $        (0,-3x + z,1)$ & Sink \\
      & $         (0,-z,1)$ & Saddle \\
      & $         (0,x + z,1)$ & Focus \\
     \hline
    $\frac{x^{4}}{4} + \frac{zx^{2}}{2} + yx$ 
      & $       (0,1,0)$ & Flow-box 1 \\
      & $         (0,-1,0)$ & Flow-box 2 \\
     \hline
  \end{tabular}
  \end{tiny}
  \end{center}

Following Proposition, whose proof can be found in \cite{BKK}, says the slow flow on the slow manifold is given by 
a constrained system.

\begin{proposition} Consider  system \eqref{potential-equation} 
and  a point $p$ satisfying
\begin{itemize}
  \item[(a)] $V_x(p) = 0$, $V_{xx}(p) = 0$;
  \item[(b)] $V_{xy}(p) \neq 0$ or $V_{xz}(p) \neq 0$.
\end{itemize}
Then system \eqref{potential-equation} can be written as
\begin{equation}\label{dessing}
-V_{xx}\dot{x} = V_{xy}\beta + V_{xz}\gamma, \ \dot{z} = \gamma,
\end{equation}
where $y = \xi(x,z)$ is the solution of $V_x(p) = 0$.
\end{proposition}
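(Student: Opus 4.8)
The plan is to exploit the fact that every solution of the reduced problem \eqref{potential-equation} is confined to the slow manifold $S_0 = \{V_x = 0\}$, and to re-parametrize this surface near $p$ by slaving $y$ rather than $x$. Since the constraint $V_x(x(t),y(t),z(t)) = 0$ holds identically along any trajectory, I would differentiate it with respect to $t$ and use the chain rule together with $\dot y = \beta$, $\dot z = \gamma$. This immediately produces the relation $V_{xx}\dot x + V_{xy}\beta + V_{xz}\gamma = 0$, which is the first equation of \eqref{dessing} after moving the $V_{xx}\dot x$ term to the left-hand side; the second equation $\dot z = \gamma$ is simply retained from \eqref{potential-equation}. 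The conceptual content is therefore that $x$ is promoted to a dynamical variable whose velocity is determined only implicitly, while $y$ becomes the slaved coordinate.

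The reason the re-parametrization is available --- and the reason it is needed --- is encoded in hypotheses (a) and (b). Condition (a) says $V_{xx}(p) = 0$, so the usual resolution $x = x(y,z)$ of the constraint via the implicit function theorem degenerates at $p$ and one cannot take $(y,z)$ as coordinates on $S_0$ near $p$. Condition (b) repairs this: assuming without loss of generality that $V_{xy}(p) \neq 0$ (the case $V_{xz}(p) \neq 0$ is symmetric), the implicit function theorem applied to $V_x$ in the variable $y$ yields a smooth function $y = \xi(x,z)$, defined on a neighborhood of $p$, with $V_x(x,\xi(x,z),z) \equiv 0$. Evaluating the coefficients $V_{xx}, V_{xy}, V_{xz}, \beta, \gamma$ along this graph then gives \eqref{dessing} as a genuine constrained system of the form \eqref{impasse} in the variables $(x,z)$, with matrix $\mathrm{diag}(-V_{xx}, 1)$ and impasse manifold $\{V_{xx} = 0\}$.

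The step I expect to require the most care is checking that \eqref{dessing} faithfully reproduces the original slow flow, rather than merely being a formal consequence of differentiating the constraint. On the open set where $V_{xx} \neq 0$ both parametrizations are valid, and there one has $x = x(y,z)$ with $\dot x = x_y\beta + x_z\gamma$; implicitly differentiating $V_x(x(y,z),y,z) = 0$ gives $x_y = -V_{xy}/V_{xx}$ and $x_z = -V_{xz}/V_{xx}$, so that $-V_{xx}\dot x = V_{xy}\beta + V_{xz}\gamma$ exactly as in \eqref{dessing}. This confirms that the two descriptions agree off the degenerate set, and that the whole effect of the construction is to absorb the singularity at $p$ into the impasse manifold of the rewritten system, which is precisely the desingularization the proposition is meant to provide.
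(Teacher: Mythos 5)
Your argument is correct and is essentially the argument the paper relies on (the paper itself only cites \cite{BKK} for the proof): differentiate the constraint $V_x=0$ along trajectories to obtain $-V_{xx}\dot{x}=V_{xy}\beta+V_{xz}\gamma$, and use hypothesis (b) with the implicit function theorem to eliminate $y$ via $y=\xi(x,z)$, turning the reduced problem into a constrained system in $(x,z)$ with impasse set $\{V_{xx}=0\}$. The only caveat is that writing the slaved variable as $y=\xi(x,z)$, as in the statement, genuinely requires $V_{xy}(p)\neq 0$; when only $V_{xz}(p)\neq 0$ one obtains the analogous system in $(x,y)$ with $\dot{y}=\beta$ retained instead, so the two cases are symmetric but not literally identical in form.
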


If $V(x,y,z) = \frac{x^{2}}{2}$, then equation \eqref{dessing} defines a smooth system.\\

 If $V(x,y,z) = \frac{x^{3}}{3} + yx$, then equation \eqref{dessing} defines a 
constrained system where $\mathcal{I} = \{(x,z)\in\mathbb{R}^{2}|x = 0\}$. 
For the Flow-Box normal forms, system \eqref{dessing} is given by
$-2x\dot{x} = \mp 1, \ \dot{z} = 0$, where the adjoint vector field is
$\dot{x} = \mp 1, \ \dot{z} = 0.$ It follows that the origin is non singular. 
The source and sink normal forms are
$-2x\dot{x} = \pm 3x + z, \ \dot{z} = 1,$
where the adjoint vector field is
$\dot{x} = \pm 3x + z, \ \dot{z} = -2x.$ Since the origin is a node for the adjoint vector field,
 it is a R-singularity. The same occurs for the saddle and focus normal forms. 
For the saddle normal form, system \eqref{dessing} takes form
$-2x\dot{x} = -z, \ \dot{z} = 1,$
where the adjoint vector field is
$\dot{x} = -z, \ \dot{z} = -2x.$
For the focus normal form, system \eqref{dessing} takes form
$
-2x\dot{x} = x+z, \ \dot{z} = 1,
$
where the adjoint vector field is
$
\dot{x} = x+z, \ \dot{z} = -2x.$\\

If $V(x,y,z) = \frac{x^{4}}{4} + \frac{zx^{2}}{2} + yx$, then the impasse set 
of \eqref{dessing} is given by $\mathcal{I} = \{(x,z)\in\mathbb{R}^{2}|3x^{2} + z = 0\}$
 and the flow-box normal forms are given by
$
-(3x^{2} + z)\dot{x} = 1, \ \dot{z} = 0,
$
where the adjoint vector field is
$\dot{x} = 1, \ \dot{z} = 0.$
In this case, the origin is a K-singularity.

\begin{figure}[h!]\label{fig-catastrofe-no}
	\center{\includegraphics[width=0.3\textwidth]{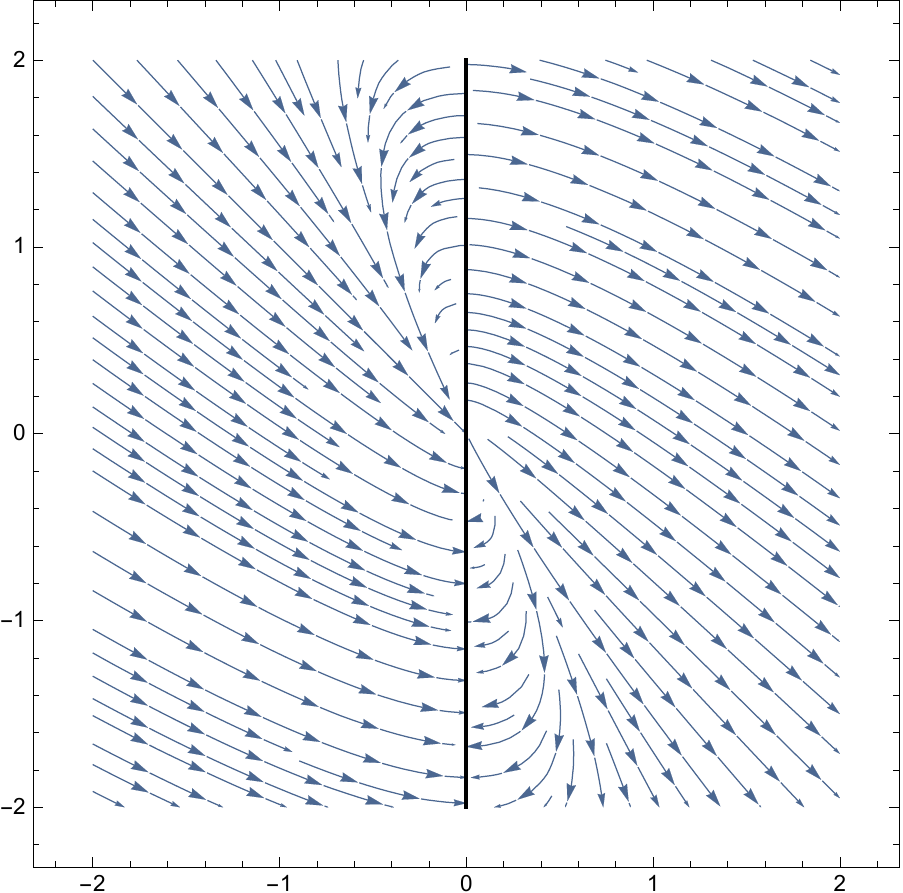}
		\hspace{0.5cm}\includegraphics[width=0.3\textwidth]{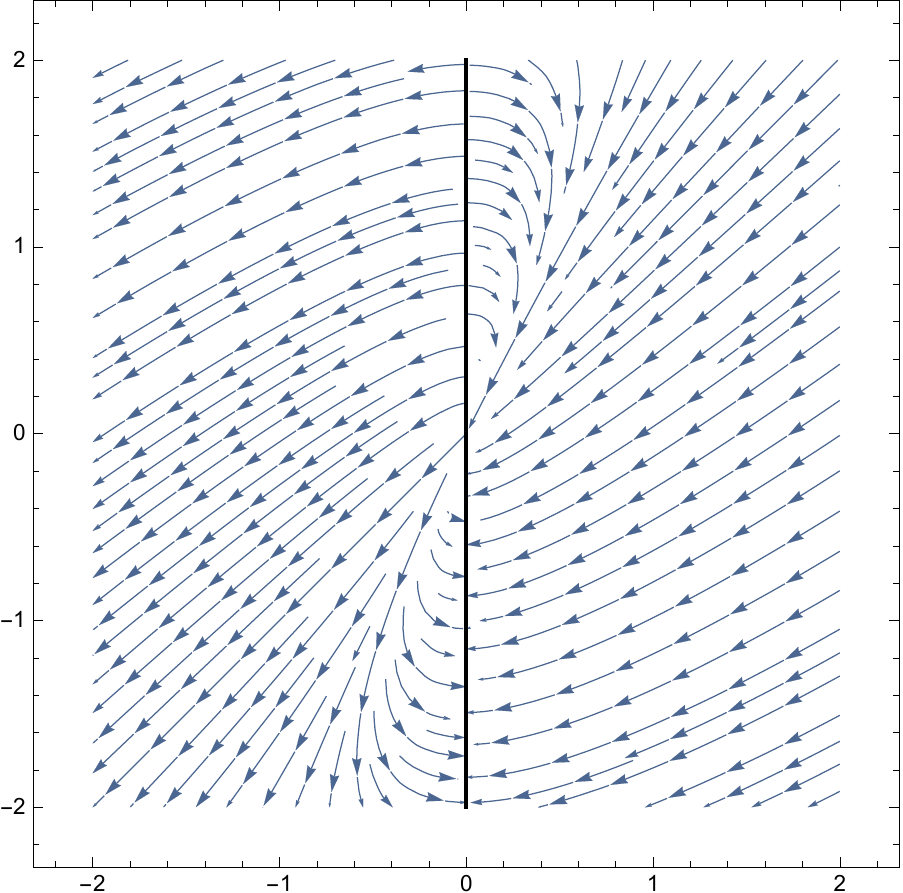}}
	\caption{\small{Phase portraits of \eqref{dessing} for the source (left) and sink (right) cases}.}
\end{figure}

\begin{figure}[h!]\label{fig-catastrofe-sela-foco}
	\center{\includegraphics[width=0.3\textwidth]{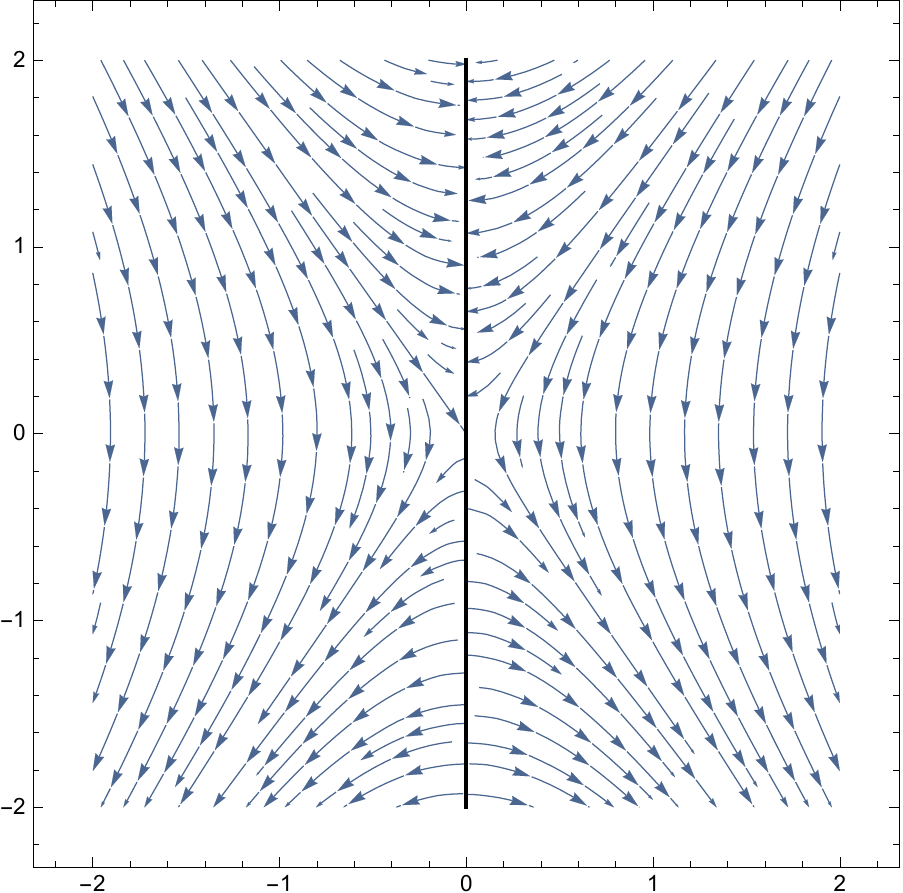}\hspace{0.5cm}
		\includegraphics[width=0.3\textwidth]{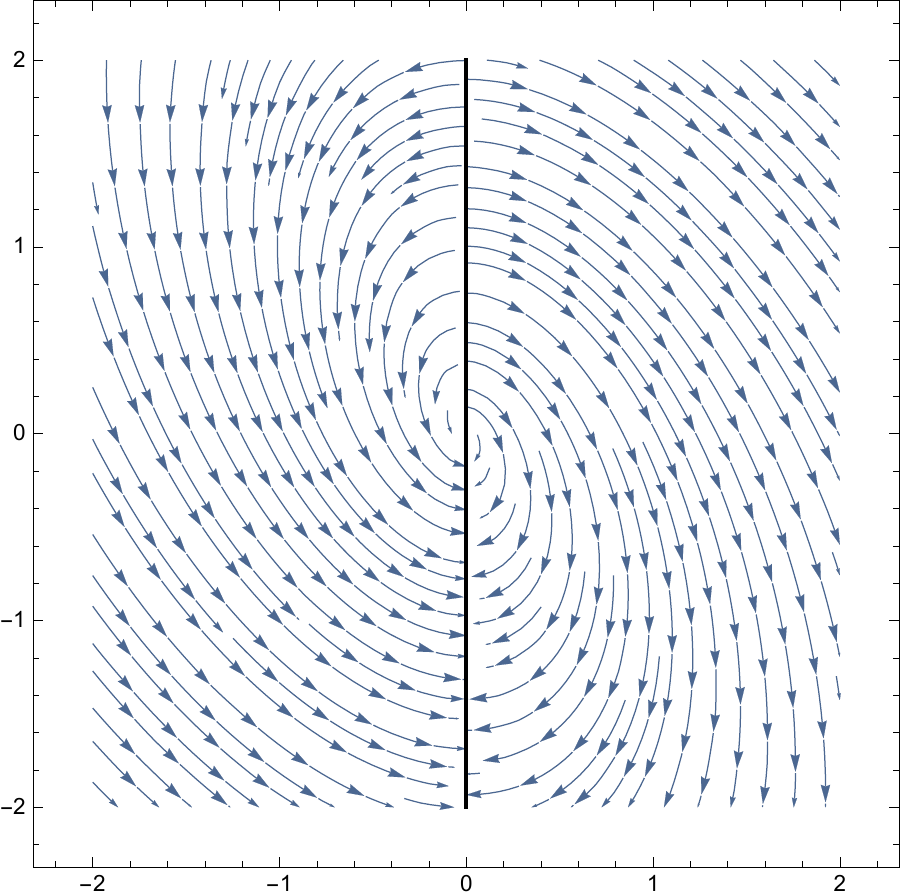}}
	\caption{\small{Phase portraits of \eqref{dessing} for the saddle (left) and focus (right) cases}.}
\end{figure}

\begin{figure}[h!]\label{fig-catastrofe-dobra}
	\center{\includegraphics[width=0.3\textwidth]{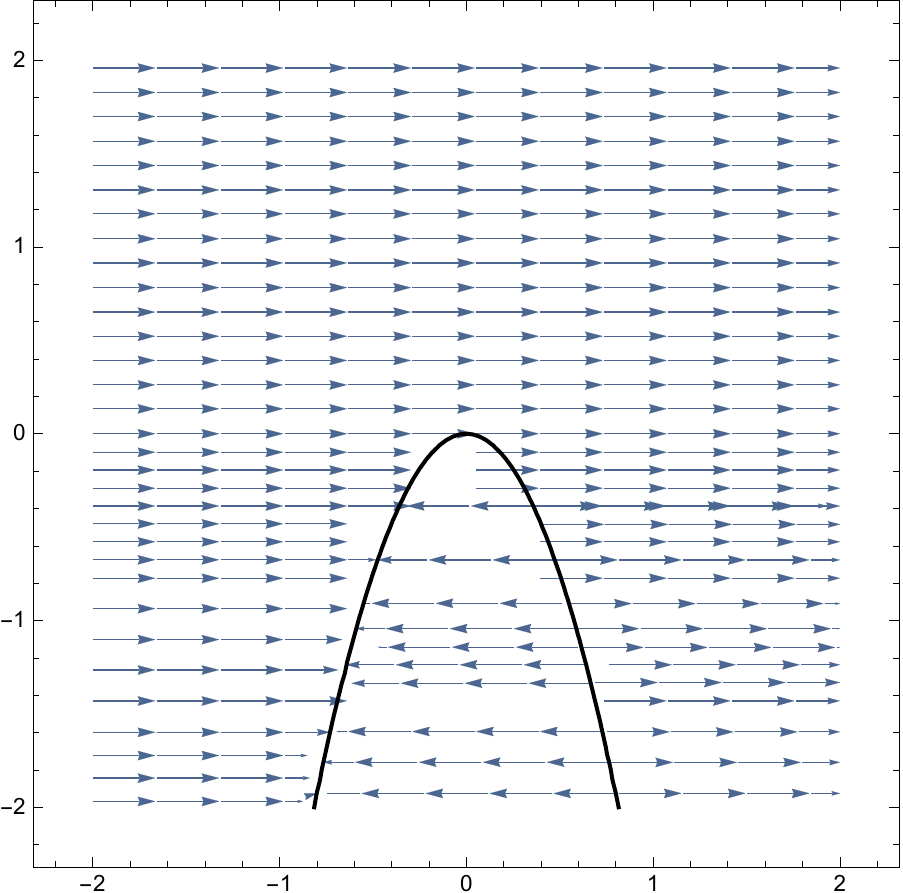}}
	\caption{\small{Phase portraits of \eqref{dessing}, fold case.}}
\end{figure}

\section{Phase portrait on invariant algebraic surfaces}\label{sec-constrained}
Let $X$ be a smooth vector field in $\R^3$ with polynomial first integral
$H(x,y,z) = f(x,y)z - g(x,y).$  Denote 
$M=H^{-1}(0) = \mathcal{G}_{M}\cup\mathcal{I}_{M}$
where  $\mathcal{G}_{M}$ is the  graphic of $z=(g/f)(x,y)$ and 
 $\mathcal{I}_{M}\subset M$ is the \textit{pseudo-impasse} subset of $M$.
 The next proposition summarizes some properties of $M$.

\begin{proposition}\label{prop-superficie} Let $\mathcal{Z}_f$ and $\mathcal{Z}_g$ be the zero sets of $f$ and $g$, respectively.
	\begin{itemize}
		\item[(a)] If $\mathcal{Z}_f\cap\mathcal{Z}_g = \emptyset$, then $\mathcal{I}_{M} =
		 \emptyset$ and $M = \mathcal{G}_{M}$.
		\item[(b)] If $\mathcal{Z}_f\cap\mathcal{Z}_g$ is a set containing isolated points, then 
		$\mathcal{I}_{M}$ is a set formed by lines which are parallel to the $z$-axis. See figure \ref{conjunto-IM}.
		\item[(c)] If $\mathcal{Z}_f$ and $\mathcal{Z}_g$ coincide in an open set of $\mathbb{R}^{2}$, 
		then $\mathcal{I}_{M}$ is a surface that is ortogonal to the $xy$-plane.
		\item[(d)] If there exist $p\neq q\in\R^2$ such that $f(p).f(q)<0$ and $\mathcal{I}_{M}=\emptyset$ 
		then $M$ is disconnected.
		\item[(e)] If $p$ is a singular point of $M$, then $p\in\mathcal{I}_{M}$.
	\end{itemize}
\end{proposition}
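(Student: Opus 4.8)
The plan is to reduce every item to a single structural description of the fibers of the vertical projection $\pi(x,y,z)=(x,y)$ restricted to $M$. First I would fix $(x,y)$ and solve $f(x,y)z = g(x,y)$ for $z$: if $f(x,y)\neq 0$ there is the unique solution $z = g(x,y)/f(x,y)$, contributing a single point lying in $\mathcal{G}_{M}$; if $f(x,y)=0$ but $g(x,y)\neq 0$ the fiber is empty; and if $f(x,y)=g(x,y)=0$ the whole vertical line $\{(x,y)\}\times\mathbb{R}$ lies in $M$. This yields the key identity
\[
\mathcal{I}_{M} = (\mathcal{Z}_f\cap\mathcal{Z}_g)\times\mathbb{R},
\]
together with the fact that $\pi$ maps $\mathcal{G}_{M}$ bijectively onto $D:=\{f\neq 0\}$ with continuous inverse $(x,y)\mapsto(x,y,g/f)$, hence homeomorphically.

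From this identity, items (a), (b) and (c) are immediate. For (a), $\mathcal{Z}_f\cap\mathcal{Z}_g=\emptyset$ gives $\mathcal{I}_{M}=\emptyset$ directly, and since no point of $M$ can have $f=0$ (that would force $g=0$), every point of $M$ lies in $\mathcal{G}_{M}$. For (b), if $\mathcal{Z}_f\cap\mathcal{Z}_g$ is a discrete set $\{p_i\}$ then $\mathcal{I}_{M}=\bigcup_i\{p_i\}\times\mathbb{R}$ is a union of lines parallel to the $z$-axis. For (c), reading ``coincide in an open set'' as the condition that $f$ and $g$ share a common factor (so that $\mathcal{Z}_f\cap\mathcal{Z}_g$ contains a curve $\Gamma$), the set $\mathcal{I}_{M}\supseteq\Gamma\times\mathbb{R}$ is a cylinder with rulings orthogonal to the $xy$-plane.

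For (e), I would compute $\nabla H=(f_xz-g_x,\,f_yz-g_y,\,f)$ and observe that its $z$-component is exactly $H_z=f$. At a singular point $p=(x_0,y_0,z_0)$ of $M$ one has $\nabla H(p)=0$, so in particular $f(x_0,y_0)=0$; combined with $H(p)=0$, i.e.\ $f(x_0,y_0)z_0=g(x_0,y_0)$, this forces $g(x_0,y_0)=0$. Hence $(x_0,y_0)\in\mathcal{Z}_f\cap\mathcal{Z}_g$ and $p\in\mathcal{I}_{M}$ by the identity above.

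Finally, (d) is where the projection homeomorphism does the work. When $\mathcal{I}_{M}=\emptyset$ we have $M=\mathcal{G}_{M}\cong D=\{f\neq 0\}$ via $\pi$, so $M$ is connected if and only if $D$ is. Writing $D=\{f>0\}\sqcup\{f<0\}$ as a disjoint union of open sets, the hypothesis $f(p)f(q)<0$ places $p$ and $q$ in different pieces, both of which are therefore nonempty; thus $D$ is disconnected and so is $M$. The only delicate points I anticipate are making the reading of (c) precise (replacing the informal ``coincide in an open set'' by the common-factor condition that actually produces a curve inside $\mathcal{Z}_f\cap\mathcal{Z}_g$) and verifying that $\pi|_{M}$ is genuinely a homeomorphism onto $D$ rather than merely a continuous bijection, which is what legitimizes transferring connectedness in (d).
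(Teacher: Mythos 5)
Your proof is correct and follows essentially the same route as the paper: the fiberwise analysis of the vertical projection giving $\mathcal{I}_{M}=(\mathcal{Z}_f\cap\mathcal{Z}_g)\times\mathbb{R}$, the splitting of $M$ into the disjoint open pieces over $\{f>0\}$ and $\{f<0\}$ for (d), and the observation $H_z=f$ for (e). You are in fact slightly more careful than the paper at two points -- deducing $g(x_0,y_0)=0$ from $H(p)=0$ in (e), and verifying that the projection is a homeomorphism in (d) -- but the argument is the same.
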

\begin{proof}
	We can see the sets $\mathcal{Z}_f$ and $\mathcal{Z}_g$ as  curves in $\mathbb{R}^{2}$. 
	This implies that $\mathcal{I}_{M}$ is a set of lines such that they are parallel to the $z$-axis 
	and they intercept the $xy$-plane at the points of intersection of the curves $\mathcal{Z}_f$ 
	and $\mathcal{Z}_g$. Then it follows that the itens (a),(b) and (c) are true. For item (d), suppose 
	that $\mathcal{I}_{M} = \emptyset$, that is, $M = \mathcal{G}_{M}$. By hypothesis, $f$ 
	assumes positives and negatives values, therefore we can write $M = \mathcal{G}^{+}_{M}\cup\mathcal{G}^{-}_{M}$, 
	where $\mathcal{G}^{+}_{M} = \{f(x,y) > 0, z = (g/f)(x,y)\}$ 
	and $\mathcal{G}^{-}_{M} = \{f(x,y) < 0, z = (g/f)(x,y)\}$. 
	The sets $\mathcal{G}^{+}_{M}$ and $\mathcal{G}^{-}_{M}$ are open sets
	 contained in $M$ such that $\mathcal{G}^{+}_{M}\cap\mathcal{G}^{-}_{M} = \emptyset$, 
	 thus the surface is disconnected.
	Now note that a point $p = (x_{0},y_{0},z_{0})\in M$ is singular if, and only if, 
	$\nabla H(p) = 0$. In other words, $p$ will be a singular point of $M$ if
	$$
	f_{x}(x_{0},y_{0})z_{0} = g_{x}(x_{0},y_{0}), \ f_{y}(x_{0},y_{0})z_{0} = g_{y}(x_{0},y_{0}), \ f(x_{0},y_{0}) = 0.
	$$
	The last equation assures that $p\in \mathcal{I}_{M}$ and thus item (e) holds.
\end{proof}

\begin{figure}[h!]
	\center{\includegraphics[width=0.4\textwidth]{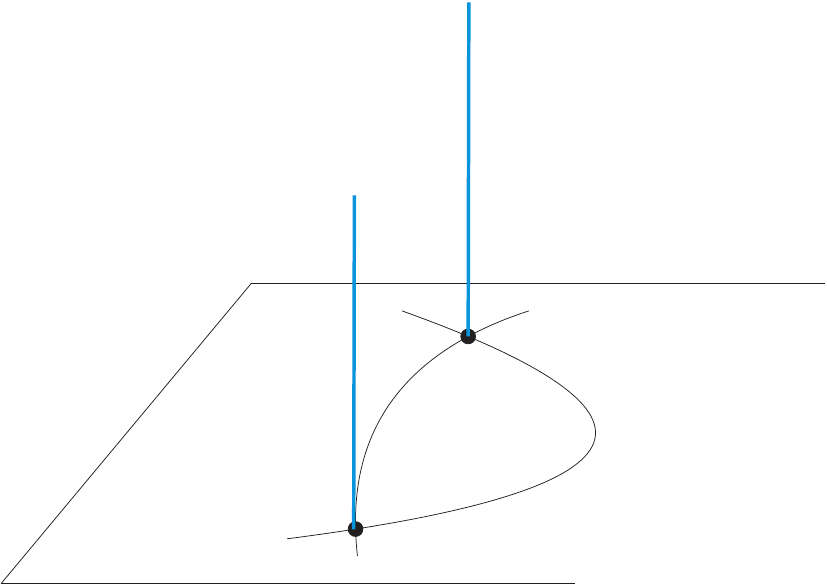}}\\
	\caption{\small{Pseudo-impasse set $\mathcal{I}_{M}$ in blue.}}\label{conjunto-IM}
\end{figure}

The set $\mathcal{I}_{M}$ is not necessarily the set of all singular points of $M$. 
As we shall see, there are examples of regular surfaces such that $\mathcal{I}_{M}\neq \emptyset$. 
Moreover, the hypothesis that $f$ assumes positive and negative values is crucial 
in Proposition \ref{prop-superficie}.\\

In the particular case where $X$ is a polynomial vector field
we can $\dot{\mathbf{x}}=X(\mathbf{x})$ to an analytic system on a 
closed ball of radius one, whose interior is diffeomorphic to $\R^3$ and its  boundary,
the $2$--dimensional sphere $\s^2,$ plays the role of the infinity.
This closed ball is denoted by $\D^3$ and called the
\emph{Poincar\'{e} ball}, because the technique for doing such an
extension is precisely the {\it Poincar\'{e} compactification} for a
polynomial differential system  in $\R^3$, which is described in
details in \cite{CL}. Besides, we also can extend $M$ to the infinity.  
In \cite{LlibreMessiasSilva} the authors proved the following Lemma in 
order to obtain the expression of a surface at infinity.

\begin{lemma}\label{lemma-paulo}
	Let $H:\mathbb{R}^{3}\rightarrow\mathbb{R}$ be a polynomial  of degree 
	$m$ and $H(x,y,z) = 0$ be an algebraic surface.
	 The extension of this surface to the boundary of the Poincar\'{e} ball 
	 $x^2+y^2+z^2=1$ is obtained solving the system
	$w^{m}H(x/w,y/w,z/w) = 0$, $w = 0$.
	\end{lemma}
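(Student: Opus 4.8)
The plan is to realize the Poincar\'e compactification explicitly and then reduce the statement to an elementary identity on the homogeneous parts of $H$. First I would recall the construction from \cite{CL}: identify $\R^3$ with the affine hyperplane $\{(\xi,\eta,\zeta,1)\}\subset\R^4$ and central-project it onto the unit sphere $\s^3=\{(x,y,z,w):x^2+y^2+z^2+w^2=1\}$. Concretely, a point $(\xi,\eta,\zeta)\in\R^3$ is sent to $(x,y,z,w)=\Delta^{-1}(\xi,\eta,\zeta,1)$ with $\Delta=\sqrt{\xi^2+\eta^2+\zeta^2+1}$, so the image lies in the open upper hemisphere $\{w>0\}$ and the inverse-projection formulas read $\xi=x/w$, $\eta=y/w$, $\zeta=z/w$. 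Under this identification the interior of $\D^3$ is diffeomorphic to $\R^3$ while the equator $\s^2=\{w=0\}\cap\s^3$ plays the role of the infinity, and the surface $M=H^{-1}(0)$ is carried to $\{(x,y,z,w)\in\s^3 : w>0,\ H(x/w,y/w,z/w)=0\}$.

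The key step is to clear denominators and observe that the result is a genuine polynomial. Writing $H=\sum_{k=0}^{m}H_k$ for the decomposition of $H$ into homogeneous parts $H_k$ of degree $k$ (with $H_m\not\equiv 0$), the homogeneity $H_k(x/w,y/w,z/w)=w^{-k}H_k(x,y,z)$ gives
\[
w^{m}H(x/w,y/w,z/w)=\sum_{k=0}^{m}w^{\,m-k}H_k(x,y,z)=:\widetilde H(x,y,z,w).
\]
Each summand is a monomial of total degree $m$ in $(x,y,z,w)$, so $\widetilde H$ is a homogeneous polynomial of degree $m$ on $\R^4$; in particular it is defined and smooth across $\{w=0\}$. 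Since $w>0$ on the open hemisphere, there $\widetilde H=0$ if and only if $H(x/w,y/w,z/w)=0$, so the zero set of $\widetilde H$ on $\{w>0\}\cap\s^3$ is exactly the image of $M$. The surface $\{\widetilde H=0\}\subset\s^3$ is therefore the natural extension of $M$ to the Poincar\'e ball, and its trace at infinity is obtained by imposing $w=0$; evaluating the displayed formula at $w=0$ leaves only the leading form, so this trace is $\{H_m(x,y,z)=0\}\cap\{x^2+y^2+z^2=1\}$, which is precisely the solution set of the system $w^{m}H(x/w,y/w,z/w)=0$, $w=0$.

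I expect the genuinely delicate point to be not the computation but the precise meaning of \emph{extension}. The argument above shows that the finite part $\{w>0\}$ recovers $M$ exactly, and that passing to $\{w=0\}$ yields the zero set of the leading form $H_m$ on $\s^2$. This algebraic trace at infinity can in general be strictly larger than the set of honest accumulation points of $M$ at infinity: for $H=z-x^2$ one finds $\widetilde H=wz-x^2$, whose restriction to $w=0$ is the full circle $\{x=0\}\cap\s^2$, although the paraboloid never accumulates at the direction $(0,0,-1)$. Consequently the lemma must be read with ``extension'' understood as the algebraic (Zariski) trace, i.e.\ the intersection of the homogeneous surface $\{\widetilde H=0\}$ with the sphere at infinity, rather than as the topological closure of the image of $M$. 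Once this interpretation is fixed, the statement follows from the homogeneity identity above together with the standard properties of the Poincar\'e compactification recorded in \cite{CL}, and no further verification is needed.
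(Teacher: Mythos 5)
Your proof is correct and is the standard argument: the paper itself does not prove this lemma but imports it from \cite{LlibreMessiasSilva}, and your route --- clearing denominators via the homogeneity identity $w^{m}H(x/w,y/w,z/w)=\sum_{k}w^{m-k}H_{k}(x,y,z)$ and setting $w=0$ to isolate the leading form $H_{m}$ --- is precisely the construction recorded there and in \cite{CL}. Your closing remark is a genuine and worthwhile clarification rather than a gap: the lemma's word ``extension'' must indeed be read as the zero set of the homogenization restricted to $\{w=0\}$ (the algebraic trace), which, as your paraboloid example $H=z-x^{2}$ shows, can strictly contain the set of directions actually accumulated by $M$ at infinity; the paper uses the lemma only in this algebraic sense (e.g.\ in Proposition \ref{lemma-infinity}), so your interpretation is the intended one.
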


If $P$ is a polynomial, denote the degree of $P$ by $\mathrm{d}(P)$. We can write $P$ as
$P(x,y) =\sum_{j = 0}^{m}P_{j}(x,y),$
where $P_{j}$ is a homogeneous polynomial of degree $j$.

\begin{proposition}\label{lemma-infinity}
	The extension of the surface $M$ to infinity contains 
	the poles $(0,0,\pm 1)$. In particular, if $\mathrm{d}(f) \geq \mathrm{d}(g)$ 
	then such extensions contain the big circle $\{z = 0\}$.
\end{proposition}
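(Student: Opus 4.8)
The plan is to apply Lemma \ref{lemma-paulo} directly and simply read off the equation of the surface at infinity. Write the homogeneous decompositions $f = \sum_{j=0}^{\mathrm{d}(f)} f_j$ and $g = \sum_{k=0}^{\mathrm{d}(g)} g_k$, with $f_j, g_k$ homogeneous of degrees $j, k$. Since $f(x,y)z$ always carries a factor $z$ while $g(x,y)$ never does, there is no cancellation of leading monomials, so $m := \mathrm{d}(H) = \max\{\mathrm{d}(f)+1,\ \mathrm{d}(g)\}$. Using the homogeneity relation $f_j(x/w,y/w) = w^{-j}f_j(x,y)$ (and likewise for $g_k$), I would compute
\begin{equation*}
w^{m} H(x/w,y/w,z/w) = \sum_{j=0}^{\mathrm{d}(f)} w^{\,m-1-j}\, f_j(x,y)\, z \ -\ \sum_{k=0}^{\mathrm{d}(g)} w^{\,m-k}\, g_k(x,y),
\end{equation*}
which is a genuine polynomial because $m \geq \mathrm{d}(f)+1$ and $m \geq \mathrm{d}(g)$ make every exponent nonnegative.

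Next I would set $w = 0$ and keep only the terms with vanishing power of $w$. The block $f_{m-1}(x,y)z$ survives exactly when $m-1 \leq \mathrm{d}(f)$, and $g_m(x,y)$ survives exactly when $m \leq \mathrm{d}(g)$. Resolving the $\max$ produces three cases:
\begin{itemize}
\item[(i)] if $\mathrm{d}(f) \geq \mathrm{d}(g)$ then $m = \mathrm{d}(f)+1$ and the equation at infinity is $f_{\mathrm{d}(f)}(x,y)\, z = 0$;
\item[(ii)] if $\mathrm{d}(g) = \mathrm{d}(f)+1$ then $m = \mathrm{d}(g)$ and it is $f_{\mathrm{d}(f)}(x,y)\, z - g_{\mathrm{d}(g)}(x,y) = 0$;
\item[(iii)] if $\mathrm{d}(g) > \mathrm{d}(f)+1$ then $m = \mathrm{d}(g)$ and it is $g_{\mathrm{d}(g)}(x,y) = 0$.
\end{itemize}

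To obtain the poles I would evaluate each of these equations at $(x,y,z)=(0,0,\pm 1)$. The surviving forms $f_{\mathrm{d}(f)}$ and $g_{\mathrm{d}(g)}$ are homogeneous of positive degree in $(x,y)$, hence vanish at $(x,y)=(0,0)$; thus the left-hand side is $0$ at both poles in every case, which shows $(0,0,\pm 1)$ lie on the extension. For the \emph{in particular} statement I note that when $\mathrm{d}(f)\geq \mathrm{d}(g)$ the equation at infinity factors as $f_{\mathrm{d}(f)}(x,y)\,z = 0$, which is satisfied by every point with $z=0$; intersecting with $x^2+y^2+z^2 = 1$ yields the full equator $\{z=0\}$, so the big circle is contained in the extension.

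I expect the only real care to lie in the bookkeeping of which homogeneous block survives after setting $w=0$ — that is, correctly matching the exponent $m-1-j$ (respectively $m-k$) to its index and resolving the $\max$ — together with the tacit nondegeneracy hypothesis $\mathrm{d}(f)\geq 1$ needed in cases (i) and (ii) to guarantee that $f_{\mathrm{d}(f)}$ really vanishes at the origin (the purely affine-plane situations with $f$ constant must be excluded, as there the surface compactifies to the equator rather than through the poles). The remaining content is the homogeneity substitution, which is routine.
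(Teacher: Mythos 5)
Your proof is correct and follows essentially the same route as the paper: both apply Lemma \ref{lemma-paulo}, split into the same three cases according to whether $\mathrm{d}(g)$ exceeds, equals, or falls below $\mathrm{d}(f)+1$, and conclude by observing that the surviving homogeneous forms of positive degree vanish at $(x,y)=(0,0)$. Your extra remark that the degenerate situation $\mathrm{d}(f)=0$ must be excluded for the poles to lie on the extension is a genuine point of care that the paper's proof leaves implicit.
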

\begin{proof}
	If $H$ is a polynomial of degree $m$ we have three cases to consider.
	\begin{itemize}
		\item $\mathrm{d}(g) = m$ and $\mathrm{d}(f) < m-1$: Lemma \ref{lemma-paulo} provides $g_{m}(x,y) = 0$ and therefore the
		extension of $M$ is the same as the extension of the surface $\{g_{m}(x,y) = 0, z\in\mathbb{R}\}$.
		\item $\mathrm{d}(g) < m$ and $\mathrm{d}(f) = m-1$: Lemma \ref{lemma-paulo} provides $f_{m-1}(x,y)z = 0$, and this implies
		that the extension of $M$ is the union of the great circle $\{z = 0\}$ with the extensions of the
		surface $f_{m-1}(x,y) = 0$.
		\item $\mathrm{d}(g) = m$ and $\mathrm{d}(f) = m-1$: Lemma \ref{lemma-paulo} provides $f_{m-1}(x,y)z - g_{m}(x,y) = 0$.
	\end{itemize}
	Since $g_{m}(x,y)$ and $f_{m-1}(x,y)$ are homogeneous polynomials of degree $m$ and $m-1$
	respectively,  the poles $(0,0,\pm 1)$ belongs to the projection at the infinity.
\end{proof}

Let $X(x,y,z)=(\alpha(x,y,z), \beta(x,y,z),  \gamma(x,y,z)),$
be a smooth vector field with $\alpha, \beta$ given by
$\alpha(x,y,z) = \sum_{i = 0}^{m}\alpha_{i}(x,y)z^{i}$, $\beta(x,y,z) = \sum_{j = 0}^{n}\beta_{j}(x,y)z^{j}.$

\begin{theorem}\label{teo-fluxo-impasse}
If $M = \{f(x,y)z - g(x,y) = 0\}$ is an invariant algebraic surface of $X=(\alpha, \beta,  \gamma),$ then there exists
 a constrained system defined in $\mathbb{R}^{2}$  such that:
	\begin{enumerate}
		\item The impasse curve of such system is given by $\mathcal{I} = \{(x,y) | f(x,y) = 0\}$.
		\item On $\mathbb{R}^{2}\backslash\mathcal{I}$, the orbits of the constrained system are  the 
		projections of the ones on  $\mathcal{G}_{M}$.
		\item The projection of $\mathcal{I}_{M}$ on $\mathbb{R}^{2}$ is contained in $\mathcal{I}$ 
		and it is a set of equilibrium points of the adjoint system.
	\end{enumerate}
\end{theorem}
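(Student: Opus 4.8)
The plan is to build the constrained system by pushing the flow of $X$ down to the $(x,y)$-plane through the graph parametrization of $\mathcal{G}_{M}$, and then clearing denominators so that the singularities of the parametrization become impasse points. First I would parametrize $\mathcal{G}_{M}$ by $(x,y)$ using $z = g(x,y)/f(x,y)$, valid wherever $f\neq 0$. Since $z$ is determined by $(x,y)$ on $\mathcal{G}_{M}$, the projection $\pi:\mathcal{G}_{M}\to\R^2\setminus\mathcal{I}$, $\pi(x,y,z)=(x,y)$, is a bijection, so trajectories of $X$ on $\mathcal{G}_{M}$ correspond exactly to trajectories of the planar field $\dot{x}=\alpha(x,y,g/f)$, $\dot{y}=\beta(x,y,g/f)$. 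Because $\alpha$ and $\beta$ are polynomials in $z$ of degrees $m$ and $n$, substituting $z=g/f$ and multiplying the two equations by $f^{m}$ and $f^{n}$ respectively yields polynomial right-hand sides $P(x,y)=\sum_{i=0}^{m}\alpha_{i}g^{i}f^{m-i}$ and $Q(x,y)=\sum_{j=0}^{n}\beta_{j}g^{j}f^{n-j}$. This produces the constrained system $A(x,y)(\dot{x},\dot{y})^{\top}=F(x,y)$ with $A=\mathrm{diag}(f^{m},f^{n})$ and $F=(P,Q)$.

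With this $A$, part (1) is immediate: $\det A=f^{m+n}$ vanishes exactly on $\{f=0\}$, so $\mathcal{I}=\{(x,y)\mid f(x,y)=0\}$. For part (2), on $\R^2\setminus\mathcal{I}$ the matrix is invertible and $A^{-1}F$ returns precisely the projected field above, so the orbits of the constrained system coincide with $\pi$ of the orbits on $\mathcal{G}_{M}$; passing to the adjoint field only rescales time by $\det A$ and reverses orientation where $\det A<0$, which does not change orbits as point sets. The only care needed here is to record that $\pi$ is a bijection, so that ``orbits'' on the surface and in the plane are genuinely in correspondence.

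The heart of the argument is part (3). By definition $\mathcal{I}_{M}$ consists of the vertical lines over points where $f=0$ forces $z$ to be free, and membership in $M=\{fz-g=0\}$ at such a point requires $g=0$ as well; hence the projection of $\mathcal{I}_{M}$ is exactly $\mathcal{Z}_f\cap\mathcal{Z}_g$, which is contained in $\mathcal{I}$. It remains to show that the adjoint field $A^{*}F=(f^{n}P,\,f^{m}Q)$ vanishes on $\{f=0,\,g=0\}$. Writing $f^{n}P=\sum_{i=0}^{m}\alpha_{i}\,g^{i}f^{m+n-i}$ and $f^{m}Q=\sum_{j=0}^{n}\beta_{j}\,g^{j}f^{m+n-j}$, every monomial carries a factor $g^{i}f^{m+n-i}$ with $0\le i\le m$ (resp. $0\le j\le n$), so each monomial is divisible by a positive power of $f$ or of $g$ as soon as $(m,n)\neq(0,0)$; therefore both components vanish on $\mathcal{Z}_f\cap\mathcal{Z}_g$, proving that these projected points are equilibria of $A^{*}F$.

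I expect the main obstacle to be the genuinely degenerate case $m=n=0$, where $\alpha,\beta$ do not depend on $z$: then the monomial argument fails and $A$ is constant, so the impasse disappears from the naive construction. To handle it I would return to the invariance identity $\langle\nabla H,X\rangle=0$, which on $\{f=0\}$ reads $(f_{x}z-g_{x})\alpha_{0}+(f_{y}z-g_{y})\beta_{0}=0$ for all $z$; separating the coefficients of $z^{1}$ and $z^{0}$ gives $(\alpha_{0},\beta_{0})\perp\nabla f$ and $(\alpha_{0},\beta_{0})\perp\nabla g$ at each point of $\mathcal{Z}_f\cap\mathcal{Z}_g$, so transversality of the curves $\mathcal{Z}_f$ and $\mathcal{Z}_g$ (the isolated-point situation of Proposition \ref{prop-superficie}(b)) forces $\alpha_{0}=\beta_{0}=0$ there. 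A secondary technical point is justifying that the chosen clearing of denominators is the correct constrained system rather than an artifact of the reparametrization; I would address this by noting that any two admissible clearings differ by a common nonvanishing factor off $\mathcal{I}$ and hence define the same oriented orbits.
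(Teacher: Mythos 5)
Your proposal is correct and follows essentially the same route as the paper: parametrize $\mathcal{G}_{M}$ by $z=g/f$, clear denominators by $f^{m}$ and $f^{n}$ to obtain the constrained system with $\det A=f^{m+n}$, and observe that every term of the adjoint field carries a positive power of $f$ or of $g$, so it vanishes on $\mathcal{Z}_f\cap\mathcal{Z}_g$. Your explicit treatment of the degenerate case $m=n=0$ via the invariance identity is a point the paper leaves implicit, but it does not change the argument.
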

\begin{proof}
	
	If $(x,y,z)\in\mathcal{G}_{M}$ then $z = \xi(x,y) = (g/f)(x,y)$. Thus the orbits of $X$ on $\mathcal{G}_{M}$ are
	 obtained from the solutions of
	\begin{equation}\label{sis-impasse-geral1}
	\dot{x}  =  \sum_{i = 0}^{m}\alpha_{i}(x,y)\Big{(}(g/f)(x,y)\Big{)}^{i}, \quad
	\dot{y}  = \sum_{j = 0}^{n}\beta_{j}(x,y)\Big{(}(g/f)(x,y)\Big{)}^{j},\end{equation}
	
Rewriting \eqref{sis-impasse-geral1} we get the constrained system
	\begin{equation}\label{sis-impasse-geral2}
	f^{m}\dot{x}  =  \sum_{i = 0}^{m}f^{m-i}g^{i}\alpha_{i}, \quad
	f^{n}\dot{y} = \sum_{j = 0}^{n}f^{n-j}g^{j}\beta_{j},
	\end{equation}
	where $f^{k}(x,y) = \Big{(}f(x,y)\Big{)}^{k}$ and $g^{l}(x,y) = \Big{(}g(x,y)\Big{)}^{l}$.
	
	The impasse curve of \eqref{sis-impasse-geral2} is  $\mathcal{I}=\{(x,y)|f(x,y) = 0\}$ and  
	system \eqref{sis-impasse-geral2} has the phase portrait of system \eqref{sis-impasse-geral1} 
	on the region $f(x,y) \neq 0$. Therefore, the orbits of $X$ on $\mathcal{G}_{M}$ are the image
	by $\xi$ of the orbits of 	\eqref{sis-impasse-geral2} on $\mathbb{R}^{2}\backslash\mathcal{I}$.
	
	The adjoint system of \eqref{sis-impasse-geral2} is given by
	\begin{equation}\label{sis-impasse-regularizado}
	\dot{x} =  f^{n}\Bigg{(}\sum_{i = 0}^{m}f^{m-i}g^{i}\alpha_{i}\Bigg{)}, \quad
	\dot{y} = f^{m}\Bigg{(}\sum_{j = 0}^{n}f^{n-j}g^{j}\beta_{j}\Bigg{)}.
	\end{equation}
	
	Moreover, the projection of $\mathcal{I}_{M}$ on $\mathbb{R}^{2}$ is given by
	$\{(x,y)| f(x,y) = 0 = g(x,y)\},$ which is a set containing equilibrium points of the 
	adjoint vector field \eqref{sis-impasse-regularizado}.
\end{proof}

Theorem \ref{teo-fluxo-impasse} shows us that outside the impasse curve $\mathcal{I}$ 
the orbits of system \eqref{sis-impasse-geral2} are the orbits
of $X$ on $\mathcal{G}_{M}$. Thus, our objective is to describe the orbits of $X$ by 
points on $\mathcal{I}_{M}$.

In the proof of Theorem \ref{teo-fluxo-impasse}, observe that $\mathcal{I}$ is a curve 
of equilibrium points for the adjoint system \eqref{sis-impasse-regularizado}. 
This leads us to start our studies with less degenerate cases. For this, we will 
consider a system in $\mathbb{R}^{3}$ given by

\begin{equation}\label{sis-impasse-casoparticular-r3}
\dot{x} = \alpha(x,y), \ \dot{y} = \sum_{j = 0}^{n}\beta_{j}(x,y)z^{j}, \ \dot{z} = \gamma(x,y,z),
\end{equation}
which has invariant algebraic surface $M=H^{-1}(0)$, $H(x,y,x)=f(x,y)z-g(x,y).$
As in the proof of Theorem \ref{teo-fluxo-impasse} we obtain the constrained system
\begin{equation}\label{sis-impasse-casoparticular}
\dot{x} =  \alpha, \quad
f^{n}\dot{y} = g^{n}\beta_{n} + \sum_{j = 0}^{n-1}f^{n-j}g^{j}\beta_{j},
\end{equation}
whose adjoint system is given by
\begin{equation}\label{sis-impasse-casoparticular-regularizado}
\dot{x} = f^{n}\alpha, \quad
\dot{y}  =  g^{n}\beta_{n} + \sum_{j = 0}^{n-1}f^{n-j}g^{j}\beta_{j}.
\end{equation}

We already know by Theorem \ref{teo-fluxo-impasse} that the projection 
$\Pi\big{(}\mathcal{I}_{M}\big{)} \subset \mathcal{I}$
is a set containing only equilibrium points of the adjoint system. 
These are the points that we must study to understand the flows 
of \eqref{sis-impasse-casoparticular-r3} on $\mathcal{I}_{M}\subset M$.

The next result classifies the impasse points by means of the functions $f$ and $g$.

\begin{proposition}\label{prop-pontos-de-impasse}
	Let $p\in\R^2$ be an  impasse point of system \eqref{sis-impasse-casoparticular}.
	\begin{enumerate}
		\item $p$ is a {R-singularity} if and only if $g(p) = 0$ or $\beta_{n}(p) = 0$.
		\item $p$ is a {K-singularity} if and only if  $f_{y}(p) = 0$.
		\item $p$ is a {RK-singularity} if and only if $p$ is R-singularity and K-singularity simultaneously.
		\item $p$ is {non singular} if and only if $p$ does not satisfy any of the previous conditions.
	\end{enumerate}
\end{proposition}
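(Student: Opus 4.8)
The plan is to read the data of the constrained system directly off \eqref{sis-impasse-casoparticular}: it has the form $A(x,y)\dot{\mathbf{x}} = F(x,y)$ with $A(x,y) = \mathrm{diag}(1, f^{n})$ and $F(x,y) = \big(\alpha,\ g^{n}\beta_{n} + \sum_{j=0}^{n-1} f^{n-j}g^{j}\beta_{j}\big)$, so that $\det A = f^{n}$ gives $\mathcal{I} = \{f = 0\}$ and the adjoint matrix is $A^{*} = \mathrm{diag}(f^{n}, 1)$, recovering \eqref{sis-impasse-casoparticular-regularizado}. The whole statement then reduces to evaluating conditions (A) and (B) of Definition \ref{def-impasse-singularidades} at a point $p$ with $f(p) = 0$ and translating each into a condition on $f_{y}$, $g$ and $\beta_{n}$, after which items (1)--(4) follow by matching against the definition of the four singularity types.

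First I would compute the pointwise linear-algebraic data at $p$. Since $f(p) = 0$, we have $A(p) = \mathrm{diag}(1,0)$, hence $\ker A(p) = \mathrm{span}\{(0,1)\}$ and $\mathrm{Im}\,A(p) = \mathrm{span}\{(1,0)\}$. The impasse curve $\mathcal{I} = \{f = 0\}$ has tangent $T_{p}\mathcal{I} = \ker df(p)$ at $p$. Condition (A), the transversality of $\ker A(p)$ to $\mathcal{I}$, then amounts to $(0,1) \notin T_{p}\mathcal{I}$, that is $f_{y}(p) \neq 0$; equivalently, (A) fails precisely when $f_{y}(p) = 0$, which is the K-condition.

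Next I would handle condition (B). Because $\mathrm{Im}\,A(p)$ is the $x$-axis, $F(p)$ lies in the range iff its second coordinate vanishes. Evaluating that coordinate at $p$, every summand $f^{n-j}g^{j}\beta_{j}$ with $0 \le j \le n-1$ carries the factor $f^{n-j}$ with $n-j \ge 1$ and so vanishes at $p$, leaving only $g(p)^{n}\beta_{n}(p)$. Thus (B) holds iff $g(p)^{n}\beta_{n}(p) \neq 0$, i.e. iff $g(p) \neq 0$ and $\beta_{n}(p) \neq 0$, and (B) fails iff $g(p) = 0$ or $\beta_{n}(p) = 0$ — the R-condition. (As a consistency check, this matches the remark that failure of (B) means $p$ is an equilibrium of the adjoint field, since $A^{*}F(p) = (0,\ g(p)^{n}\beta_{n}(p))$.) Combining the two computations with Definition \ref{def-impasse-singularidades} yields all four items: failure of (B) is the R-case, failure of (A) is the K-case, their conjunction is the RK-case, and their simultaneous negation is the non-singular case.

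The step I expect to require the most care is the bookkeeping in condition (B): one must verify that all terms $f^{n-j}g^{j}\beta_{j}$ with $j \le n-1$ genuinely vanish at $p$, so that the second coordinate of $F(p)$ is controlled solely by $g(p)^{n}\beta_{n}(p)$. A secondary subtlety worth flagging is that $\det A = f^{n}$ has a zero of order $n$ along $\mathcal{I}$, so for $n \ge 2$ the literal regularity condition $d(\det A)(p) \neq 0$ need not hold; this does not affect the classification, since conditions (A) and (B) depend only on $A(p)$, $F(p)$ and $\nabla f(p)$, all evaluated pointwise, and the impasse curve is smooth wherever $\nabla f(p) \neq 0$.
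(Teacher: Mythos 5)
Your proposal is correct and follows the same route as the paper, whose proof is the one-line remark that the claim ``follows directly from Definition \ref{def-impasse-singularidades} and the expression of the adjoint system''; you have simply written out the pointwise computation of $\ker A(p)$, $\mathrm{Im}\,A(p)$ and the surviving term $g(p)^{n}\beta_{n}(p)$ that this remark alludes to. Your observation that $\det A = f^{n}$ is not a regular value for $n\ge 2$, so that the transversality in condition (A) must be read against $\{f=0\}$ rather than against $d(f^{n})$, is a genuine subtlety the paper passes over silently, and your resolution of it is the right one.
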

\begin{proof}
	It follows directly from Definition \ref{def-impasse-singularidades} and
	 the expression of the adjoint system \eqref{sis-impasse-casoparticular-regularizado}.
\end{proof}

We  say that an impasse point of system \eqref{sis-impasse-casoparticular} $p$
 is  \textit{R-singularity of first kind} if $g(p) = 0$ and
 \textit{R-singularity of second kind} if $\beta_{n}(p) = 0$. The set of R-singularities of first
 kind is exactly the projection of $\mathcal{I}_{M}$ on the $xy$-plane. Moreover, it may appear R-singularities of first and second kind simultaneously.

\begin{corollary}\label{coro-superficie}
	Consider the adjoint system \eqref{sis-impasse-casoparticular-regularizado} 
	and the surface  $M=H^{-1}(0)$, $H(x,y,x)=f(x,y)z-g(x,y).$
	\begin{enumerate}
		\item Let $p$ be a singular point of $M$. Thus the projection of $p$ on $xy$-plane is a R-singularity
		of first kind for the adjoint system.
		\item If there exists $p\neq q\in\R^2$ such that $f(p)f(q)<0$  and the impasse curve $\mathcal{I}$
		does not have R-singularities of first kind then $M$ is disconnected.
	\end{enumerate}
\end{corollary}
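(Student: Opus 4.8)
The plan is to obtain both items as direct consequences of the structural results already proved, with essentially no new computation: Proposition~\ref{prop-superficie} (items (d) and (e)), Theorem~\ref{teo-fluxo-impasse}, Proposition~\ref{prop-pontos-de-impasse}, and the observation recorded just after Proposition~\ref{prop-pontos-de-impasse} that the set of R-singularities of first kind coincides with the projection $\Pi(\mathcal{I}_{M})$ on the $xy$-plane. All the work lies in translating the hypotheses of each item into the vanishing of $f$ and $g$.

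For item (1), I would begin with Proposition~\ref{prop-superficie}(e), so that a singular point $p=(x_{0},y_{0},z_{0})$ of $M$ belongs to $\mathcal{I}_{M}$. From the gradient computation in its proof, $\nabla H(p)=(f_{x}z_{0}-g_{x},\,f_{y}z_{0}-g_{y},\,f)$ vanishes, and in particular its third coordinate gives $f(x_{0},y_{0})=0$; hence the projection $(x_{0},y_{0})$ lies on the impasse curve $\mathcal{I}$. Because $p\in M$ forces $H(p)=f(x_{0},y_{0})z_{0}-g(x_{0},y_{0})=0$ together with $f(x_{0},y_{0})=0$, I conclude $g(x_{0},y_{0})=0$. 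By Proposition~\ref{prop-pontos-de-impasse}(1) the equality $g(x_{0},y_{0})=0$ certifies that $(x_{0},y_{0})$ is an R-singularity, and by the very definition of R-singularity of first kind ($g=0$) it is of first kind, which is exactly the assertion.

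For item (2), the key step is to convert the hypothesis ``$\mathcal{I}$ has no R-singularity of first kind'' into ``$\mathcal{I}_{M}=\emptyset$''. This uses the identification of the R-singularities of first kind with $\Pi(\mathcal{I}_{M})=\{(x,y):f=0=g\}$: if this base set is empty, then the vertical lines that constitute $\mathcal{I}_{M}$ have no support, so $\mathcal{I}_{M}=\emptyset$. Together with the remaining hypothesis that $f(p)f(q)<0$ for some $p\neq q$, Proposition~\ref{prop-superficie}(d) then applies verbatim and yields that $M$ is disconnected.

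The closest thing to an obstacle is keeping these set-level translations honest. In item (1) one must use the defining equation $H(p)=0$, not merely the gradient condition, to upgrade $f=0$ to $g=0$ at the projection; and in item (2) one must be sure that the absence of first-kind R-singularities empties all of $\mathcal{I}_{M}$ rather than only its singular part. Both points are immediate once the equalities $f=0$ and $g=0$ are tracked with care.
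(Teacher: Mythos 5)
Your proof is correct and follows essentially the same route as the paper's: item (1) via Proposition \ref{prop-superficie}(e) together with the identification of first-kind R-singularities with $\Pi(\mathcal{I}_{M})=\{f=0=g\}$, and item (2) by converting the absence of first-kind R-singularities into $\mathcal{I}_{M}=\emptyset$ and invoking Proposition \ref{prop-superficie}(d). You merely spell out the intermediate equalities ($f=0$ from $\nabla H(p)=0$, then $g=0$ from $H(p)=0$) that the paper leaves implicit.
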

\begin{proof}
	For item 1, recall that Proposition \ref{prop-superficie} assures that $p\in\mathcal{I}_{M}$.
	 It follows from the last remark that the projection of $p$ is a R-singularity of first kind. 
	 
	If $\mathcal{I}$ does not have R-singularities of first kind, then $\mathcal{I}_{M}$ is empty. By Proposition \ref{prop-superficie}, the item 2 is true.
\end{proof}

Let $r$ be a parallel line with respect to the $z$-axis and $q\in\mathcal{I}$ the point in which $r$ 
intercepts the $xz$-plane. From Corollary \ref{coro-superficie} we know that if $q$ is not a R or 
RK-singularity of first kind, then $r$ does not intercept the surface $M$. On the other hand, 
as a consequence of Proposition \ref{lemma-infinity} we have that $r$ intersects $M$ at infinity.

\begin{corollary}
	If $q\in\mathcal{I}$ is an isolated K-singularity, R-singularity of second kind or RK-singularity 
	of second kind, then the line $r = \{(p,z) | z\in\mathbb{R}\}$ intersects $M$ at the infinity.
\end{corollary}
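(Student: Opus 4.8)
The plan is to derive the statement as a short consequence of Proposition \ref{lemma-infinity}, using the hypotheses only to guarantee that the line $r$ avoids the finite part of $M$, so that the intersection genuinely occurs at infinity. First I would verify that in each of the three cases the point $q$ fails to be an R-singularity of first kind. By the computation behind Proposition \ref{prop-pontos-de-impasse}, on the impasse curve $\{f=0\}$ the adjoint field \eqref{sis-impasse-casoparticular-regularizado} reduces to $(\dot x,\dot y)=(0,g^{n}\beta_{n})$, so an impasse point is of first kind precisely when $g(q)=0$. A K-singularity satisfies condition (B), hence is not an R-singularity at all and in particular $g(q)\neq 0$; an R- or RK-singularity of second kind is one with $\beta_{n}(q)=0$, and for it to be genuinely of second kind we have $g(q)\neq 0$. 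In all three cases $q$ lies off the set of R-singularities of first kind, which by the remark following Proposition \ref{prop-pontos-de-impasse} is exactly $\Pi(\mathcal{I}_{M})=\{f=0=g\}$. Hence $q\notin\Pi(\mathcal{I}_{M})$. Since the graph $\mathcal{G}_{M}$ projects onto $\{f\neq 0\}$ while $q\in\mathcal{I}=\{f=0\}$, the vertical line $r$ cannot meet $\mathcal{G}_{M}$, and $q\notin\Pi(\mathcal{I}_{M})$ shows it does not meet $\mathcal{I}_{M}$ either; this is the content of Corollary \ref{coro-superficie} in the form observed just before the present statement, namely that $r$ does not meet $M$ at finite distance.

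Next I would pass to the Poincar\'e ball $\D^{3}$ and describe the closure of $r$. Since $r=\{(p,z)\mid z\in\R\}$ is parallel to the $z$-axis, as $z\to\pm\infty$ the direction of the point $(p,z)$ tends to $(0,0,\pm 1)$; under the central projection onto the boundary sphere $\s^{2}=\{x^{2}+y^{2}+z^{2}=1\}$ this means that the closure of $r$ meets $\s^{2}$ exactly at the two poles $(0,0,\pm 1)$. I would then invoke Proposition \ref{lemma-infinity}, which asserts that the extension of $M$ to $\s^{2}$ always contains the poles $(0,0,\pm 1)$, independently of the degrees of $f$ and $g$. Comparing the two, the closure of $r$ and the extension of $M$ share the points $(0,0,\pm 1)$, which is precisely the assertion that $r$ intersects $M$ at infinity.

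Since both ingredients are already established, the proof is mostly bookkeeping; the only genuinely geometric point is the identification of the limit of a vertical line in $\D^{3}$ with the poles. The main thing to be careful about is that this limit is read consistently with the compactification used in Lemma \ref{lemma-paulo}: the chart $w=0$ there corresponds to the boundary sphere, and a ray in the $z$-direction limits to the point of $\{w=0\}$ with $(x,y,z)$-direction $(0,0,\pm1)$, that is, to a pole. Once this identification is fixed, the conclusion is immediate, and the hypotheses on $q$ serve precisely to upgrade ``$r$ meets $M$ somewhere at infinity'' to ``$r$ meets $M$ \emph{only} at infinity''.
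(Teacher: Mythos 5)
Your proof is correct and follows essentially the same route as the paper: the paper's own argument simply invokes Proposition \ref{lemma-infinity} to place the poles $(0,0,\pm 1)$ in the extension of $M$ and observes that the vertical line $r$ reaches those poles at infinity. Your additional first paragraph (checking that the hypotheses exclude R-singularities of first kind, so $r$ misses $M$ at finite distance) is exactly the content of the remark the paper states immediately before the corollary, so you have merely made explicit what the paper leaves to the surrounding discussion.
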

\begin{proof}
	We already know by Proposition \ref{lemma-infinity} that the extension of $M$ at the infinity 
	contains the poles $(0,0\pm 1)$. The corollary follows observing that the line reaches such poles at the infinity.
\end{proof}

\noindent\textbf{Remark.}  We say that $H(x,y,z)$  is a
\emph{Darboux polynomial} of $X$ if it
satisfies $(\nabla H)\cdot X=\mu H,$ where $\mu=\mu(x,y,z)$  is a
real polynomial  called the {\it cofactor} of
$H(x,y,z)$. The surface $H(x,y,z)=0$ is an {\it invariant algebraic
surface}.\\

\noindent{\textbf{Example.}	The polynomial $H(x,y,z) = yz - 1$ is a Darboux polynomial of $X(x,yz)=(0,z,-z^3)$
	with cofactor $\mu(x,y,z) = -z^{2}$. The flows on $M = H^{-1}(0)$ are described by the constrained system
	$	\dot{x} = 0, \ y\dot{y} = 1.	$ All points on $\mathcal{I} = \{y = 0\}$ are non singular.
	 It follows from Corollary \ref{coro-superficie} that $M$ is disconnected. See Figure \ref{fig-exemplo-naosing}.\\

\noindent{\textbf{Example.}	 The polynomial $H(x,y,z) = (x + y^{2})z - 1$ is Darboux polynomial for $X(x,yz)=(0,z,-2yz^3)$
		with cofactor $\mu(x,y,z) = -2yz^{2}$. The flows on $M = H^{-1}(0)$ are described by the constrained system
	$\dot{x} = 0, \ (x + y^{2})\dot{y} = 1.$	The origin is a K-singularity. Moreover, it follows from 
	Corollary \ref{coro-superficie} that $M$ is disconnected. See Figure \ref{fig-exemplo-dobra}.\\

\noindent{\textbf{Example.}	 Let $\lambda\in\mathbb{R}$ be a parameter satisfying $|\lambda| > 1$. 
	The polynomial $H(x,y,z) = (y-x)z - 1$ is Darboux polynomial for
	$X(x,yz)=(1,\lambda yz,z^2-\lambda yz^3)$ with cofactor $\mu(x,y,z) = z -\lambda yz^{2}$. 
	The flows on $M = H^{-1}(0)$ are described by the constrained system
	$	\dot{x} = 1, \ (y - x)\dot{y} = \lambda y.	$ The origin is a hyperbolic equilibrium
	 point for the adjoint vector field, whose eigenvalues are $-1$ and $\lambda$ and 
	 its eigenvectors are $v_{-1} = (1,0)$ and $v_{\lambda} = (1,1 + \lambda)$. 
	 If $\lambda > 1$ the origin will be a saddle point and if $\lambda < 1$ the origin 
	 will be a node. In both cases, the origin is a R-singularity of second kind. 
	 See Figure \ref{fig-exemplo-singtipo2-sela}.\\

\noindent{\textbf{Example.}	
	Let $\lambda\in\mathbb{R}$ be a parameter satisfying $\lambda > 0$. 
	The polynomial $H(x,y,z) = (y-x)z - 1$ is Darboux polynomial for
	$X(x,y,z)=(\frac{1 + \lambda^{2}}{\lambda},-(\lambda x+y)z, (\lambda x+y)z^{3})$
	 with cofactor $\mu(x,y,z) = (\lambda x+y)z^{2}$. The flows on $M = H^{-1}(0)$ 
	 are described by the constrained system
	$
	\dot{x} = \frac{1 + \lambda^{2}}{\lambda}, \ y\dot{y} = -(\lambda x+y).
	$
	The origin is a R-singularity of second kind. Moreover, it is a hyperbolic focus 
	for the adjoint vector field whose eigenvalues are $\frac{-1\pm\sqrt{-3-4\lambda^{2}}}{2}$. 
	See figure \ref{fig-exemplo-singtipo2-foco}.\\

\noindent{\textbf{Example.}
	The polynomial $H(x,y,z) = y^{2}z - (x+y)$ is a first integral for
	 $X(x,y,z)=(\frac{1}{4},  -\frac{1}{2}(yz + \frac{1}{2}),z^2 )$
	and the flows on $M = H^{-1}(0)$ are described by
	$
	\dot{x} = \frac{1}{4}, \ y\dot{y} = -\frac{1}{2}(\frac{3y}{2} + x).$ The origin is a 
	R-singularity of first kind for the system whose eigenvalues are $-\frac{1}{2}$ and 
	$-\frac{1}{4}$. See figure \ref{fig-exemplo-singtipo1-no}.\\

\begin{figure}[h!]
	\center{\includegraphics[width=0.3\textwidth]{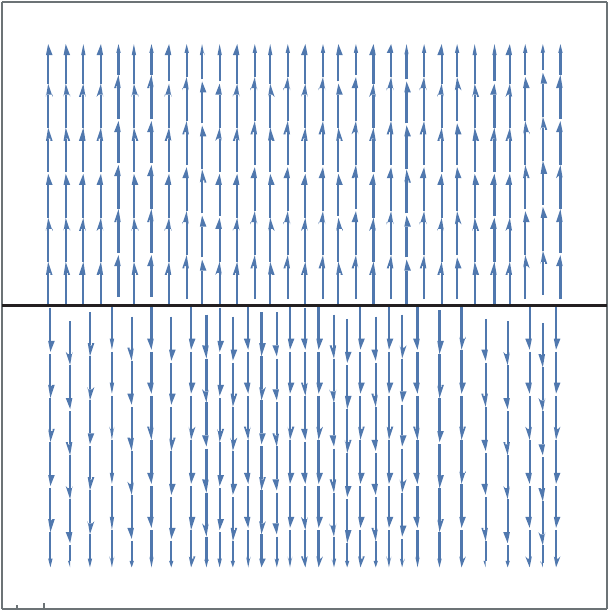}\hspace{0.5cm}\includegraphics[width=0.4\textwidth]{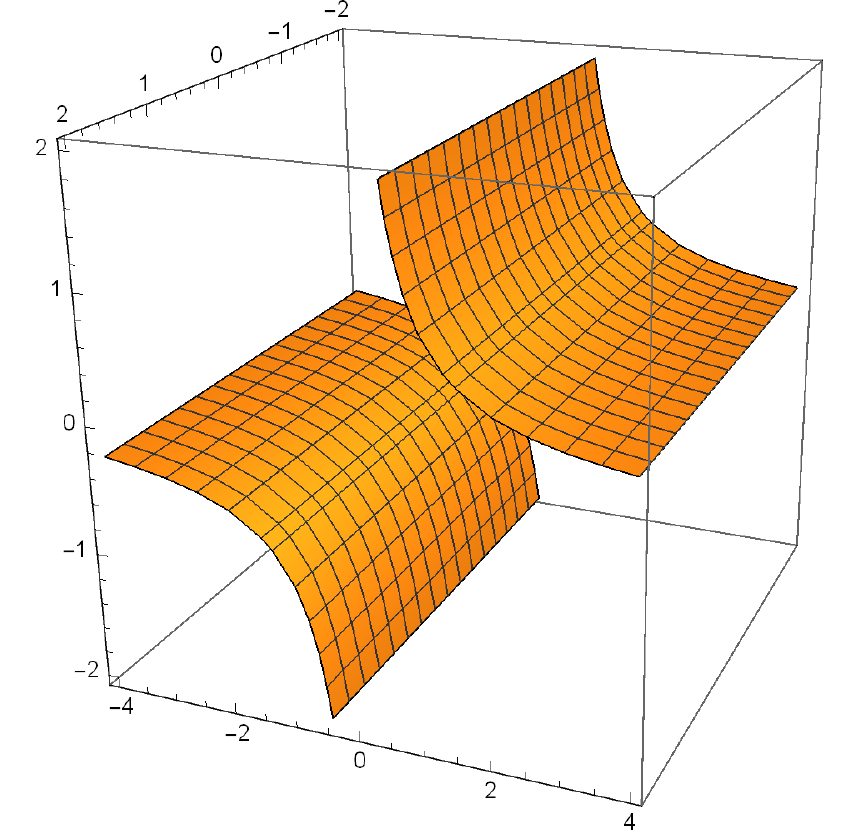}}
	\caption{\small{Phase portrait of $X(x,yz)=(0,z,-z^3)$ (left) and surface $M$ (right).}}\label{fig-exemplo-naosing}
\end{figure}

\begin{figure}[h!]
	\center{\includegraphics[width=0.3\textwidth]{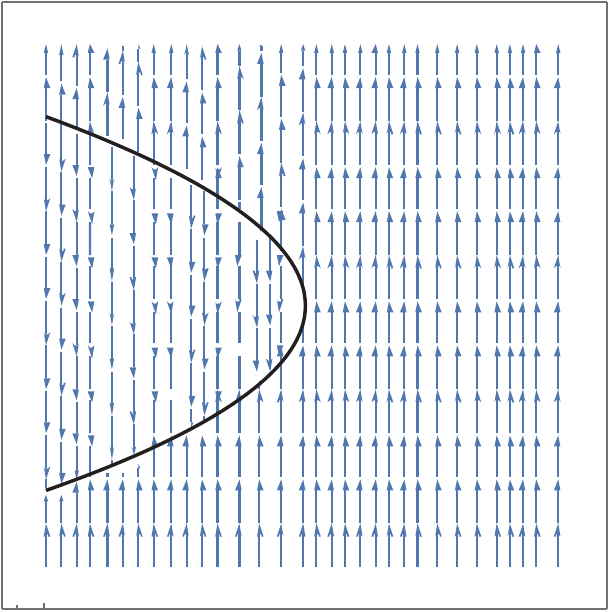}\hspace{0.5cm}\includegraphics[width=0.4\textwidth]{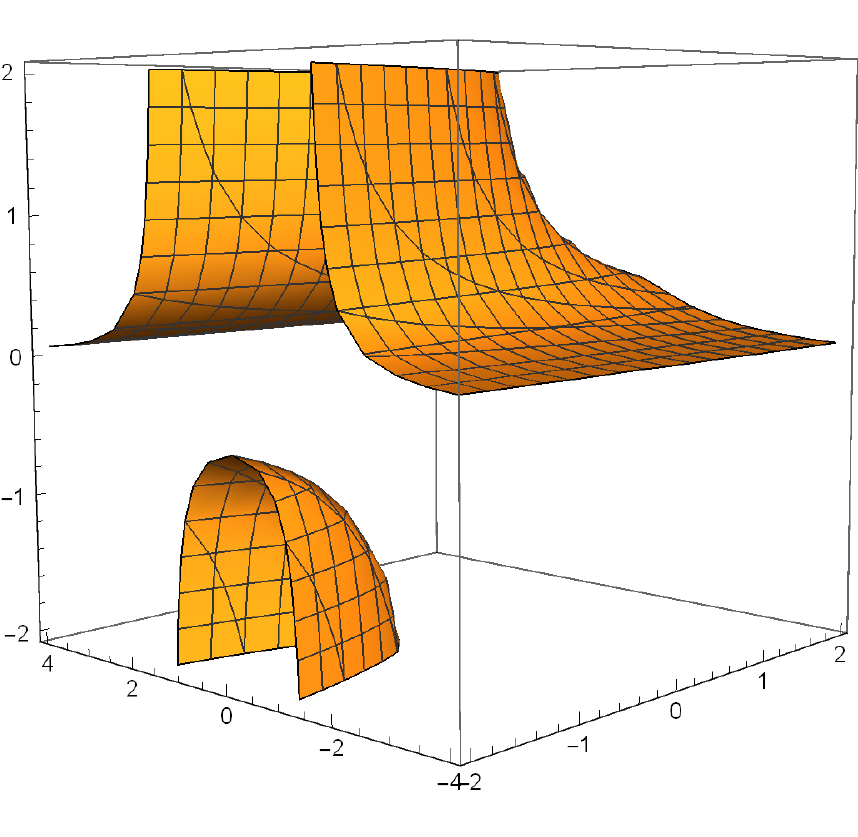}}
	\caption{\small{Phase portrait of $X(x,yz)=(0,z,-2yz^3)$ (left) and surface $M$ (right).}}\label{fig-exemplo-dobra}
\end{figure}

\begin{figure}[h!]
	\center{\includegraphics[width=0.25\textwidth]{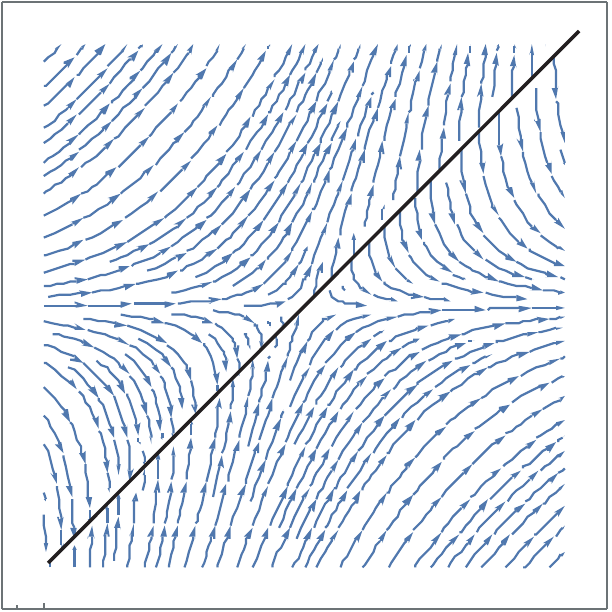}
		\includegraphics[width=0.25\textwidth]{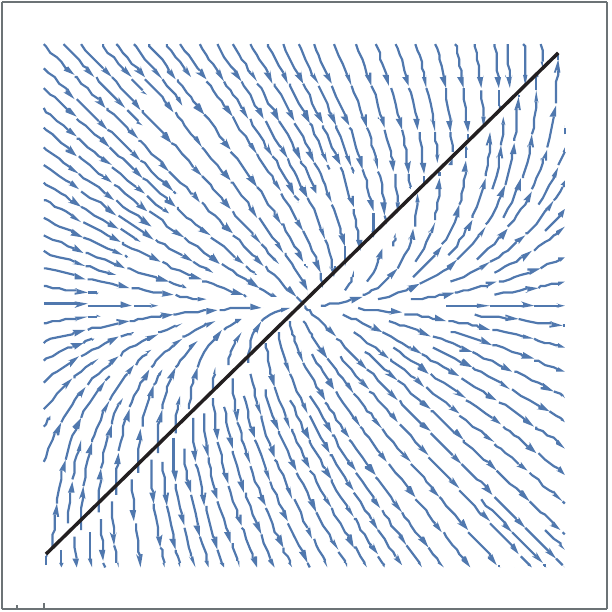}\includegraphics[width=0.3\textwidth]{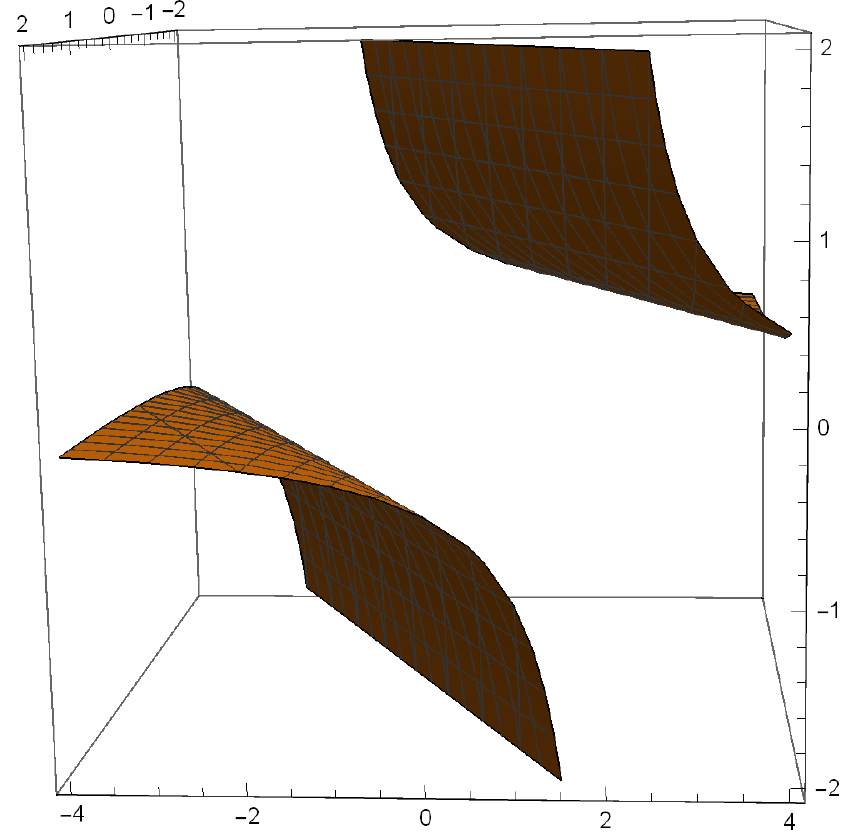}}
	\caption{\small{Phase portrait of $X(x,yz)=(1,\lambda yz,z^2-\lambda yz^3)$ for $\lambda > 1$, for $\lambda<1$ and surface $M$ (right).}}\label{fig-exemplo-singtipo2-sela}
\end{figure}

\begin{figure}[h!]
	\center{\includegraphics[width=0.3\textwidth]{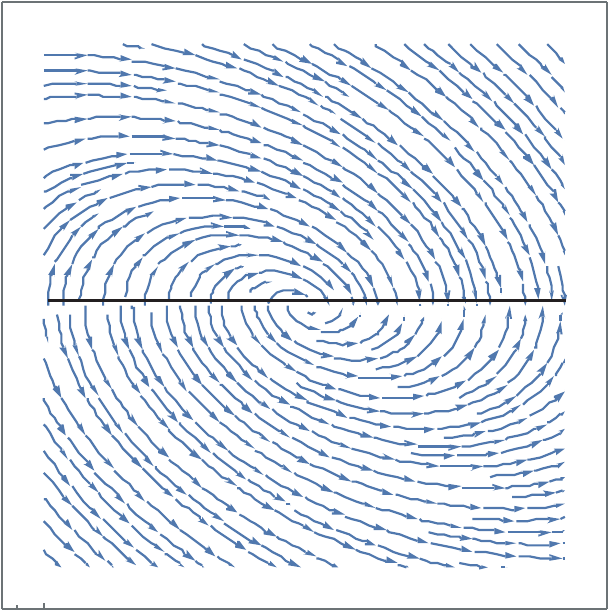}\hspace{0.5cm}\includegraphics[width=0.4\textwidth]{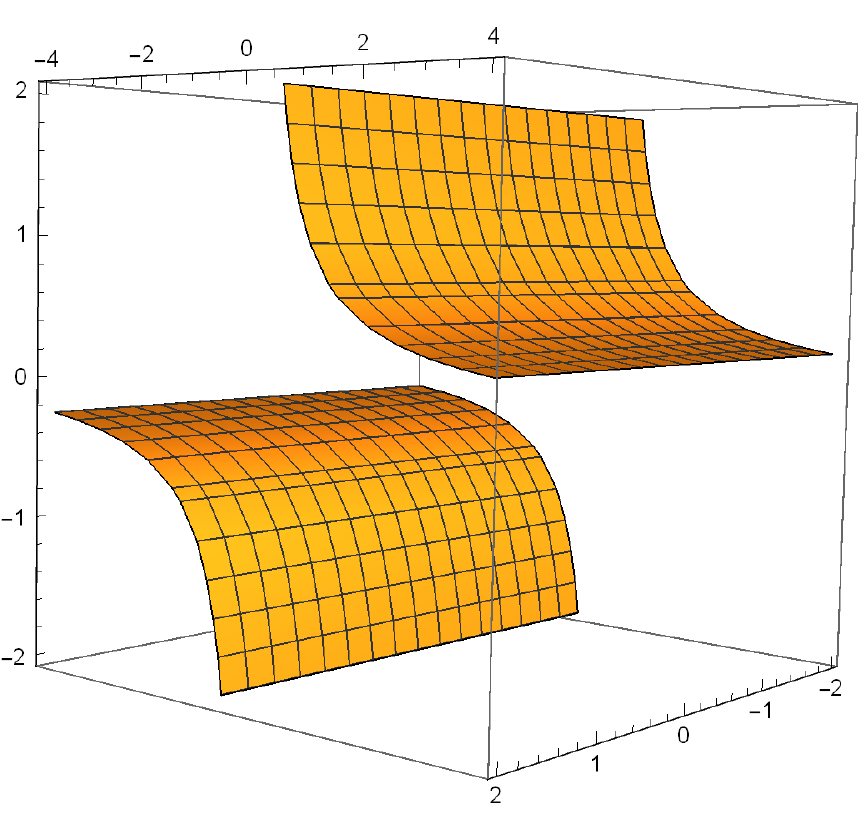}}
	\caption{\small{Phase portrait of $X(x,y,z)=(\frac{1 + \lambda^{2}}{\lambda},-(\lambda x+y)z, (\lambda x+y)z^{3})$ 
			(left) and surface $M$ (right).}}\label{fig-exemplo-singtipo2-foco}
\end{figure}

\begin{figure}[h!]
	\center{\includegraphics[width=0.3\textwidth]{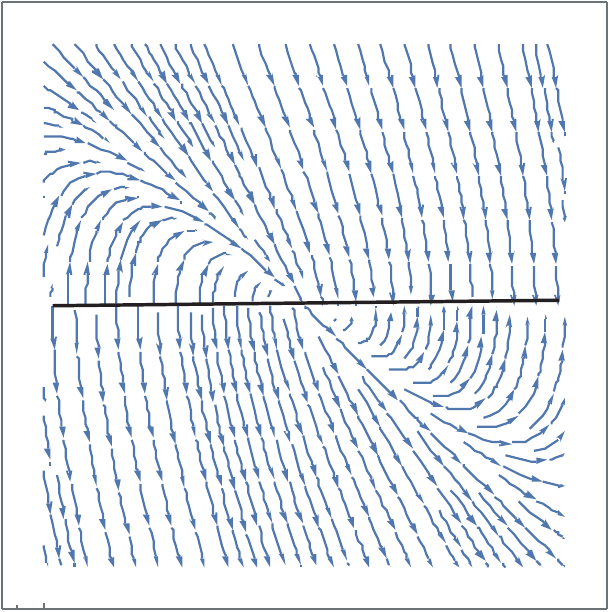}\hspace{0.5cm}\includegraphics[width=0.4\textwidth]{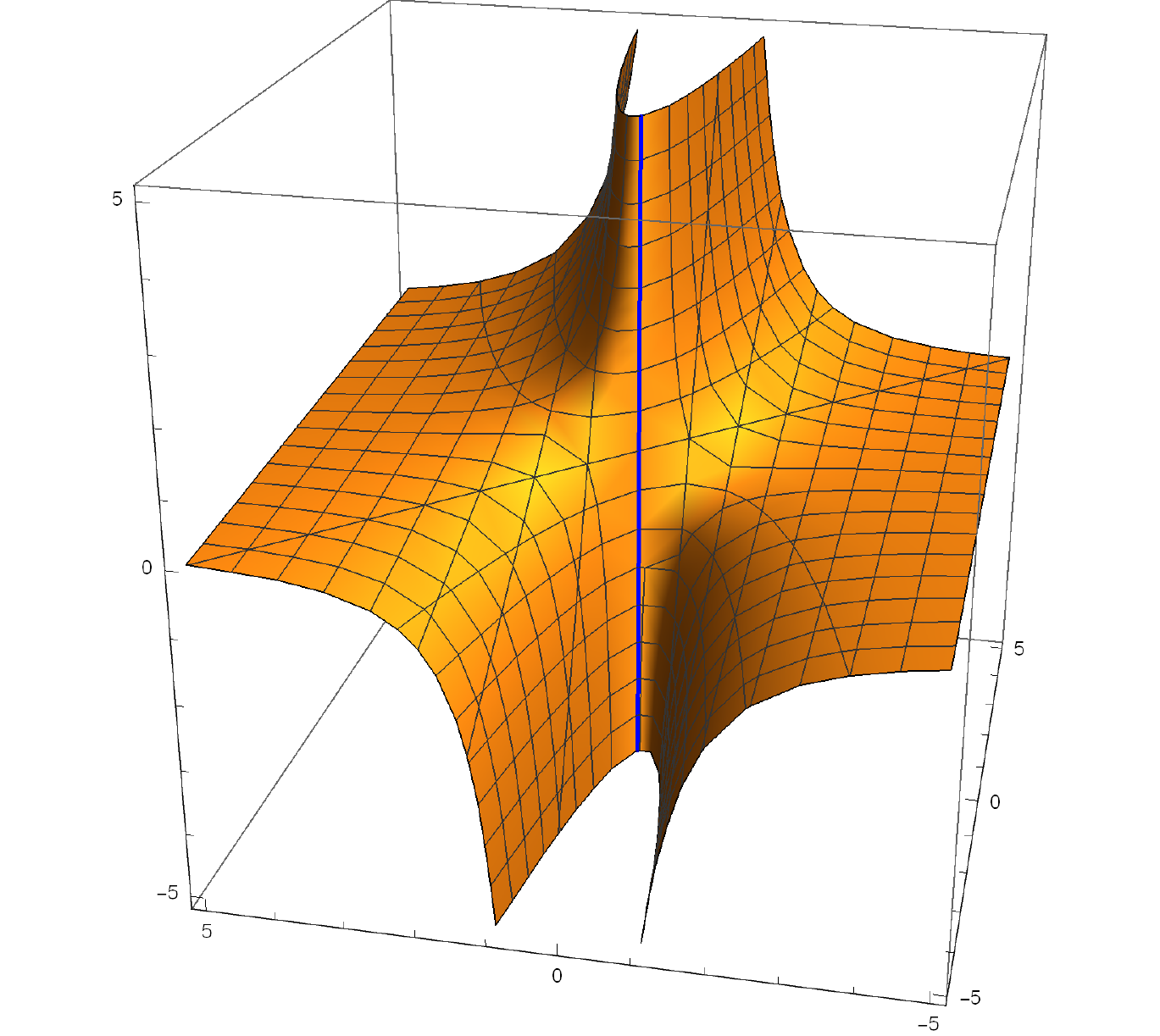}}
	\caption{\small{Phase portrait of $X(x,y,z)=(\frac{1}{4},  -\frac{1}{2}(yz + \frac{1}{2}),z^2 )$ (left) and surface $M$ (right). 
			The pseudo impasse set $\mathcal{I}_{M}$ is denoted in blue.}}\label{fig-exemplo-singtipo1-no}
\end{figure}

If $n \geq 2$ the linearization of the adjoint vector field \eqref{sis-impasse-casoparticular-regularizado} $J\widetilde{X}(x,y)$
 is given by
\begin{equation}\label{sis-impasse-matrixjacobiana-adjunto}
 \left(
\begin{array}{cc}
f^{n-1}(nf_{x}\alpha + f\alpha_{x}) & f^{n-1}(nf_{y}\alpha + f\alpha_{y}) \\
g^{n-1}(ng_{x}\beta_{n} + g\beta_{nx}) + \sum_{i = 0}^{n-1}S_{ix} & g^{n-1}(ng_{y}\beta_{n} + g\beta_{ny}) + \sum_{i = 0}^{n-1}S_{iy} \\
\end{array}
\right),
\end{equation}
where
$$S_{ix} = f^{n-i-1}g^{i-1}\Big{(} fg\beta_{ix} + \big{(} (n-i)f_{x}g + ifg_{x} \big{)}\beta_{i}\Big{)},$$ and
$$S_{iy} = f^{n-i-1}g^{i-1}\Big{(} fg\beta_{iy} + \big{(} (n-i)f_{y}g + ifg_{y} \big{)}\beta_{i}\Big{)}.$$

If $p\in\mathbb{R}^{2}$ is a R-singularity (of first or second kind),
then $p$ will be non-hyperbolic. In particular, if $p$ is R-singularity of first kind then $J\widetilde{X}(p)$ is identically zero.\\

\subsection{Particular case: $\beta(x,y)$ with degree 1 in the variable $z$.}   In what follows, we suppose that $X$ is
\begin{equation}\label{sis-impasse-grau1}
\dot{x} = \alpha(x,y), \ \dot{y} = \beta_{1}(x,y)z + \beta_{0}(x,y), \ \dot{z} = \gamma(x,y,z),
\end{equation}
whose flows are described by the constrained system
\begin{equation}\label{sis-impasse-casoparticular-grau1}
\dot{x} = \alpha(x,y), \ f(x,y)\dot{y} = g(x,y)\beta_{1}(x,y) + f(x,y)\beta_{0}(x,y),
\end{equation}
and its adjoint system  is
\begin{equation}\label{sis-impasse-casoparticular-regularizado-grau1}
\dot{x} = f(x,y)\alpha(x,y), \ \dot{y} = g(x,y)\beta_{1}(x,y) + f(x,y)\beta_{0}(x,y).
\end{equation}

The linearization  $J\widetilde{X}(x,y)$ of \eqref{sis-impasse-casoparticular-regularizado-grau1} is
\begin{equation}\label{sis-impasse-matrixjacobiana-adjunto1}
 \left(
\begin{array}{cc}
f_{x}\alpha + f\alpha_{x} & f_{y}\alpha + f\alpha_{y} \\
(f\beta_{0x} + g\beta_{1x}) + (f_{x}\beta_{0} + g_{x}\beta_{1}) & (f\beta_{0y} + g\beta_{1y}) + (f_{y}\beta_{0} + g_{y}\beta_{1}) \\
\end{array}
\right).
\end{equation}

Another justify for our choice is that all the examples in the next section (Falkner-Skan Equation,
 Lorenz System and Chen System) are in the form \eqref{sis-impasse-grau1}. Moreover, 
 linear vector fields can be written as \eqref{sis-impasse-grau1}.

It is important to observe that it may exist a function $\widetilde{f}:\mathbb{R}^{2}\rightarrow\mathbb{R}$ 
such that $f(x,y) = \widetilde{f}(x,y)\beta_{1}(x,y)$. In fact, this is the case for Lorenz System and 
Chen System. This implies that System \eqref{sis-impasse-grau1} gives rise to the constrained system
\begin{equation}\label{sis-impasse-casoparticular-grau1-obs}
\dot{x} = \alpha(x,y), \ \widetilde{f}(x,y)\dot{y} = g(x,y) + \widetilde{f}(x,y)\beta_{0}(x,y),
\end{equation}
whose adjoint vector field is
\begin{equation}\label{sis-impasse-casoparticular-regularizado-grau1-obs}
\dot{x} = \widetilde{f}(x,y)\alpha(x,y), \ \dot{y} = g(x,y) + \widetilde{f}(x,y)\beta_{0}(x,y).
\end{equation}

System  \eqref{sis-impasse-casoparticular-grau1-obs} does not have R-singularities of second
 kind and    $\mathcal{I}_{M}$ is
the union of two sets: lines that intersect the $xy$-plane at the points of $\{\widetilde{f}(x,y) = 0 = g(x,y)\}$ 
and the set of lines that intersect the $xy$-plane at the points of $\{\beta_{1}(x,y) = 0 = g(x,y)\}$. 
System \eqref{sis-impasse-casoparticular-grau1-obs}  describes the flow on the first set only.

In Lorenz and Chen Systems, the sets $\{\widetilde{f}(x,y) = 0\}$ and $\{\beta_{1}(x,y) = 0\}$ are the same. 
Therefore, we will focus in this case.

\begin{proposition}\label{prop-singularidades-impasses}
Consider system \eqref{sis-impasse-casoparticular-grau1} and let $r = \{(x_{0},y_{0},z)\}\subset\mathcal{I}_{M}$
 be a line. Then
\begin{enumerate}
\item The point $q = (x_{0},y_{0})$ is a RK-singularity of first kind or a R-singularity of first and second kind.
\item If $q$ is a RK-singularity of first kind, then $q$ is a hyperbolic equilibrium point if, and only if, $q$ is 
a node with one eigenvalue.
\item If $q$ is a R-singularity of first and second kind, then $q$ is non hyperbolic.
\end{enumerate}
\end{proposition}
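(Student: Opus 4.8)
The plan is to use the invariance of $M$ to produce a polynomial identity in $z$ along the vertical line $r$, and then to read off all three items from the coefficients of that identity together with the explicit Jacobian \eqref{sis-impasse-matrixjacobiana-adjunto1}. Write $q = (x_{0},y_{0})$; since $r \subset \mathcal{I}_{M} \subset M$ the whole line lies on the surface, so $f(q) = g(q) = 0$. Because $M$ is invariant, $\langle \nabla H, X\rangle$ vanishes on $M$ (it is a multiple of $H$), hence at every point of $r$. Using $\nabla H = (f_{x}z - g_{x},\, f_{y}z - g_{y},\, f)$ and $X = (\alpha,\, \beta_{1}z + \beta_{0},\, \gamma)$, the term $f\gamma$ drops out along $r$ because $f(q) = 0$, and what remains is the polynomial identity in $z$
\[
(f_{x}z - g_{x})\alpha + (f_{y}z - g_{y})(\beta_{1}z + \beta_{0}) \equiv 0,
\]
with every coefficient evaluated at $q$. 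Matching the coefficients of $z^{2}$, $z^{1}$, $z^{0}$ gives
\[
f_{y}\beta_{1} = 0, \qquad f_{x}\alpha + f_{y}\beta_{0} - g_{y}\beta_{1} = 0, \qquad g_{x}\alpha + g_{y}\beta_{0} = 0 .
\]

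Item (1) is now immediate: $g(q) = 0$ already makes $q$ an R-singularity of first kind, and the relation $f_{y}(q)\beta_{1}(q) = 0$ forces either $f_{y}(q) = 0$, so that $q$ is also a K-singularity (an RK-singularity of first kind), or $\beta_{1}(q) = 0$, so that $q$ is an R-singularity of first and second kind. These are precisely the two alternatives asserted.

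For item (2), assume $q$ is an RK-singularity of first kind, i.e.\ $f_{y}(q) = 0$. Substituting $f(q) = g(q) = f_{y}(q) = 0$ into \eqref{sis-impasse-matrixjacobiana-adjunto1} makes $J\widetilde{X}(q)$ lower triangular, with diagonal entries $f_{x}\alpha$ and $g_{y}\beta_{1}$; hence its eigenvalues are $\lambda_{1} = f_{x}(q)\alpha(q)$ and $\lambda_{2} = g_{y}(q)\beta_{1}(q)$. The second coefficient relation, evaluated with $f_{y}(q) = 0$, reads $f_{x}\alpha = g_{y}\beta_{1}$, so $\lambda_{1} = \lambda_{2}$. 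Thus $q$ is hyperbolic exactly when this common real eigenvalue is nonzero; a nonzero double real eigenvalue produces a node with a single eigenvalue, and conversely such a node is hyperbolic, which is the claimed equivalence. For item (3), assume $q$ is an R-singularity of first and second kind, so $g(q) = \beta_{1}(q) = 0$ in addition to $f(q) = 0$. Then \eqref{sis-impasse-matrixjacobiana-adjunto1} collapses to
\[
J\widetilde{X}(q) = \begin{pmatrix} \alpha f_{x} & \alpha f_{y} \\ \beta_{0} f_{x} & \beta_{0} f_{y} \end{pmatrix},
\]
a rank-one outer product of the columns $(\alpha,\beta_{0})$ and $(f_{x},f_{y})$ evaluated at $q$; consequently $\det J\widetilde{X}(q) = 0$ and $q$ is non-hyperbolic.

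The only genuinely delicate point is the first step: recognizing that invariance forces the $z$-polynomial identity along the \emph{entire} line $r$ and, crucially, that the $f\gamma$ contribution disappears there, so that the leading ($z^{2}$) coefficient is exactly $f_{y}\beta_{1}$. Once this identity is secured, items (1)--(3) reduce to the short substitutions into \eqref{sis-impasse-matrixjacobiana-adjunto1} carried out above.
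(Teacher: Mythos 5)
Your proof is correct and follows essentially the same route as the paper's: both derive the identity $f_{y}\beta_{1}=0$, $\alpha f_{x}+\beta_{0}f_{y}-\beta_{1}g_{y}=0$, $\alpha g_{x}+\beta_{0}g_{y}=0$ by restricting $\langle\nabla H,X\rangle$ (a multiple of $H$) to the line $r$ and equating coefficients of the resulting polynomial in $z$, and then substitute the two alternative degeneracies into the Jacobian \eqref{sis-impasse-matrixjacobiana-adjunto1} to get the repeated eigenvalue $f_{x}\alpha=g_{y}\beta_{1}$ in case (2) and a rank-one (hence non-hyperbolic) linearization in case (3). No substantive difference from the paper's argument.
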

\begin{proof}
Since $H(x,y,z) = f(x,y)z-g(x,y)$ defines an invariant surface for \eqref{sis-impasse-grau1}, then the equation
$z^{2}f_{y}\beta_{1} + z\Big{(}\alpha f_{x} + \beta_{0}f_{y} - \beta_{1}g_{y}\Big{)} - \Big{(}\alpha g_{x} + b_{0}g_{y}\Big{)} +\gamma f = \mu H$
holds for $(x,y,z)\in\mathbb{R}^{3}$. In particular, this still true for points in the pseudo-impasse set 
$\mathcal{I}_{M}$, that is, points such that $f(x,y) = g(x,y) = 0$. Then we have
$z^{2}f_{y}\beta_{1} + z\Big{(}\alpha f_{x} + \beta_{0}f_{y} - \beta_{1}g_{y}\Big{)} - \Big{(}\alpha g_{x} + \beta_{0}g_{y}\Big{)} = 0.$
In particular, we have a polynomial in the $z$ variable which is identically zero. 
Therefore its coefficients are identically zero for all $(x,y)\in\mathbb{R}^{2}$, and it implies that
\begin{equation}\label{eq-singularidades-impasse}
f_{y}\beta_{1} = 0, \ \alpha f_{x} + \beta_{0}f_{y} - \beta_{1}g_{y} = 0, \ \alpha g_{x} + \beta_{0}g_{y} = 0.
\end{equation}

The first equation in \eqref{eq-singularidades-impasse} tells us that a R-singularity of 
first kind $q = (x_{0}, y_{0})$ satisfies $f_{y}(q) = 0$ or $\beta_{1}(q) = 0$, and therefore 
$q$ is a RK-singularity of first kind or a R-singularity of first and second kind. 
For the second statement, if $q$ is RK-singularity of first kind then the linearization of 
\eqref{sis-impasse-casoparticular-regularizado-grau1} is
$
J\widetilde{X}(x,y) = \left(
\begin{array}{cc}
f_{x}\alpha & 0 \\
(f_{x}\beta_{0} + g_{x}\beta_{1}) & g_{y}\beta_{1} \\
\end{array}
\right),
$
and it implies that $q$ is hyperbolic if, and only if, $f_{x}\alpha$ and $g_{y}\beta_{1}$ are non-zero. 
Moreover, the second equation in \eqref{eq-singularidades-impasse} assures that 
$f_{x}\alpha = g_{y}\beta_{1}$, and then $q$ is a hyperbolic node for the adjoint 
vector field \eqref{sis-impasse-casoparticular-regularizado-grau1}. Finally, for the third statement,
 if $q$ is a R-singularity of first and second kind then the linearization of \eqref{sis-impasse-casoparticular-regularizado-grau1} is
$
J\widetilde{X}(x,y) = \left(
\begin{array}{cc}
f_{x}\alpha & f_{y}\alpha \\
f_{x}\beta_{0} & f_{y}\beta_{0} \\
\end{array}
\right),
$
and its eigenvalues are 0 and $f_{x}\alpha + f_{y}\beta_{0}$. Therefore $q$ is non hyperbolic.
\end{proof}

\begin{remark}
Proposition \ref{prop-singularidades-impasses} concerns systems of the form 
\eqref{sis-impasse-casoparticular-grau1}, and in general the statements 2 and 3 
are not true for systems of the form \eqref{sis-impasse-casoparticular-grau1-obs}. 
In fact, according our examples, systems like \eqref{sis-impasse-casoparticular-grau1-obs} 
can have a hyperbolic node of the adjoint system with two eigenvectors.
\end{remark}

\begin{theorem}\label{teo-fluxo-1}
Suppose that $H:\mathbb{R}^{3}\rightarrow\mathbb{R}$ given by $H(x,y,z) = f(x,y)z - g(x,y)$ 
defines an algebraic invariant surface for \eqref{sis-impasse-grau1}.
Assume that $\mathcal{Z}_{f}$ intercepts $\mathcal{Z}_{g}$ at isolated points.
\begin{enumerate}
\item Let $p = (x_{0}, y_{0}, z_{0})\in\mathcal{I}_{M}$ be a point and let $r\subset\mathcal{I}_{M}$ 
be the line through $p$. If $X(p)\in r$ then $q = (x_{0}, y_{0})$ is a non hyperbolic equilibrium 
point of the adjoint vector field \eqref{sis-impasse-casoparticular-regularizado-grau1}.
\item If $p = (x_{0},y_{0},z_{0})$ is a singular point for $M$ then $q = (x_{0},y_{0})$ is a non 
hyperbolic equilibrium point for the adjoint vector field \eqref{sis-impasse-casoparticular-regularizado-grau1}.
\item If $q = (x_{0}, y_{0})$ is a hyperbolic RK-singularity of first kind for the adjoint system 
\eqref{sis-impasse-casoparticular-regularizado-grau1}, then the line 
$r = \{(q, z), z\in\mathbb{R}\}\subset \mathcal{I}_{M}$ does not contain any equilibrium point of \eqref{sis-impasse-grau1}.
\end{enumerate}
\end{theorem}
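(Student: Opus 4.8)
The plan is to reduce all three statements to one computation: the Jacobian of the adjoint field \eqref{sis-impasse-casoparticular-regularizado-grau1} at a point $q=(x_0,y_0)$ lying on $\mathcal{I}$ with $f(q)=g(q)=0$, read together with the dichotomy of Proposition \ref{prop-singularidades-impasses}. First I would observe that in each item $q$ is an R-singularity of first kind, hence an equilibrium of the adjoint field: in (1) because $p\in\mathcal{I}_M$; in (2) because a singular point of $M$ lies in $\mathcal{I}_M$ by Proposition \ref{prop-superficie}(e); in (3) because $r\subset\mathcal{I}_M$ by hypothesis. Evaluating \eqref{sis-impasse-matrixjacobiana-adjunto1} at such a $q$, every entry carrying a factor $f$ or $g$ vanishes, so the linearization collapses to
\begin{equation*}
J\widetilde{X}(q)=\begin{pmatrix} f_x\alpha & f_y\alpha \\ f_x\beta_0+g_x\beta_1 & f_y\beta_0+g_y\beta_1 \end{pmatrix},
\end{equation*}
with all functions evaluated at $q$.

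The next step is to split according to Proposition \ref{prop-singularidades-impasses}: either $q$ is an RK-singularity of first kind, where $f_y(q)=0$, or an R-singularity of first and second kind, where $\beta_1(q)=0$. In the latter branch $q$ is already non-hyperbolic by Proposition \ref{prop-singularidades-impasses}(3), so the work lies in the RK-branch. There $f_y(q)=0$ kills the upper-right entry, leaving a lower-triangular matrix with eigenvalues $f_x\alpha$ and $g_y\beta_1$; moreover the middle identity in \eqref{eq-singularidades-impasse} becomes $f_x\alpha=g_y\beta_1$, so the two eigenvalues coincide and hyperbolicity of $q$ is exactly the condition $f_x(q)\alpha(q)\neq 0$.

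With this reduction each item becomes short. For (1) I would note that $X(p)\in r$ forces the planar components of $X(p)$ to vanish, in particular $\alpha(q)=0$; then $f_x\alpha=0$ and $q$ is non-hyperbolic in the RK-branch, while in the other branch it is non-hyperbolic regardless. For (2) I would use that $p$ singular for $M$ means $\nabla H(p)=0$, which yields $f_y(q)z_0=g_y(q)$; in the RK-branch $f_y(q)=0$ then forces $g_y(q)=0$, so the eigenvalue $g_y\beta_1$ vanishes. For (3) I would argue by contradiction: an equilibrium $(x_0,y_0,z^{\ast})$ of \eqref{sis-impasse-grau1} on $r$ would satisfy $\alpha(q)=0$ from its first coordinate, giving $f_x\alpha=0$ and contradicting the hyperbolicity of the RK-singularity $q$; hence $r$ carries no equilibrium of $X$.

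The main obstacle is not analytic but organizational: one must verify that the suppression of the $f$- and $g$-terms in \eqref{sis-impasse-matrixjacobiana-adjunto1} is legitimate on $\mathcal{I}_M$, and then route each hypothesis into the correct branch of the Proposition \ref{prop-singularidades-impasses} dichotomy. Once that bookkeeping is fixed, every item reduces to the single observation that the geometric hypothesis ($X(p)$ tangent to $r$, $p$ singular, or an equilibrium sitting on $r$) forces one eigenvalue of $J\widetilde{X}(q)$ to be zero.
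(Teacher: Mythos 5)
Your proposal is correct and follows essentially the same route as the paper: evaluate the Jacobian \eqref{sis-impasse-matrixjacobiana-adjunto1} at a point with $f(q)=g(q)=0$, feed in the identities \eqref{eq-singularidades-impasse} and the relevant geometric hypothesis, and read off a vanishing eigenvalue (or, for item (3), that hyperbolicity forces $\alpha(q)\neq 0$). The only difference is organizational --- you factor all three items through the dichotomy of Proposition \ref{prop-singularidades-impasses} and the identity $f_x\alpha=g_y\beta_1$ once, whereas the paper repeats the linearization computation separately for each item --- but the underlying argument is identical.
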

\begin{proof}
Since $X(p)\in r$, then $X(p) = \big{(}0,0,\gamma(p)\big{)}$. In particular, 
we have $\alpha(x_{0}, y_{0}) = 0$ and therefore the linearization $J\widetilde{X}(x_{0},y_{0}) $  
of the adjoint system  \eqref{sis-impasse-casoparticular-regularizado-grau1} at $(x_{0}, y_{0})$ is
$ \left(
\begin{array}{cc}
0 & 0 \\
(f_{x}\beta_{0} + g_{x}\beta_{1}) & (f_{y}\beta_{0} + g_{y}\beta_{1}) \\
\end{array}
\right).
$
The eigenvalues are $0$ and $f_{y}\beta_{0} + g_{y}\beta_{1}$, thus $q$ is not hyperbolic. 
For the second statement, Corollary \ref{coro-superficie} assures that $q$ is R-singularity of 
first kind and therefore $q$ is an equilibrium point for the adjoint system 
\eqref{sis-impasse-casoparticular-regularizado-grau1}. Since $p$ is singular, we have the equations
$
f_{x}(x_{0},y_{0})z_{0} = g_{x}(x_{0},y_{0}), \ f_{y}(x_{0},y_{0})z_{0} = g_{y}(x_{0},y_{0}), \ f(x_{0},y_{0}) = 0,
$
and therefore the linearization $J\widetilde{X}(q)$ of \eqref{sis-impasse-casoparticular-regularizado-grau1} 
computed in $q$ is
$
\left(
\begin{array}{cc}
f_{x}\alpha & f_{y}\alpha \\
f_{x}(\beta_{0} + z_{0}\beta_{1}) & f_{y}(\beta_{0} + z_{0}\beta_{1}) \\
\end{array}
\right).
$

The eigenvalues are $0$ and $f_{x}\alpha + f_{y}(\beta_{0} + \beta_{1}z_{0})$. It follows that $q$ 
is non hyperbolic. Finally, for the third statement, since $q = (x_{0}, y_{0})$ is a hyperbolic 
R-singularity of first kind then $f(q) = g(q) = f_{y}(q) = 0$. The linearization $J\widetilde{X}(x,y)$ 
of \eqref{sis-impasse-casoparticular-regularizado-grau1} computed at $q$ is
$\left(
\begin{array}{cc}
f_{x}\alpha & 0 \\
(f_{x}\beta_{0} + g_{x}\beta_{1}) & g_{y}\beta_{1} \\
\end{array}
\right).
$

In particular, $\alpha(q) \neq 0$ and therefore the line $r = \{(x_{0},y_{0}, z), z\in\mathbb{R}\}\subset \mathcal{I}_{M}$ 
does not contain any equilibrium point of system \eqref{sis-impasse-grau1}.
\end{proof}

\begin{corollary}
	Let $r\subset\mathcal{I}_{M}$ be an invariant line of \eqref{sis-impasse-grau1}. 
	Then the projection of $r$ on the $xy$-plane is a non hyperbolic 
	equilibrium point for \eqref{sis-impasse-casoparticular-regularizado-grau1}.
\end{corollary}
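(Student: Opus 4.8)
The plan is to recognize that this corollary is an immediate consequence of part (1) of Theorem \ref{teo-fluxo-1}, once invariance of the line is correctly translated into the tangency hypothesis $X(p)\in r$ appearing there. So the first step is that geometric translation.

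Write $r=\{(x_0,y_0,z):z\in\mathbb{R}\}$ and set $q=(x_0,y_0)$. Since $r$ is parallel to the $z$-axis, its tangent space at every point is spanned by $(0,0,1)$, so $r$ is invariant under the flow of \eqref{sis-impasse-grau1} if and only if the first two components of $X$ vanish identically along $r$. Reading off these components, $\alpha(x_0,y_0)$ and $\beta_1(x_0,y_0)z+\beta_0(x_0,y_0)$, and requiring them to vanish for every $z$, I obtain $\alpha(q)=0$ (and also $\beta_1(q)=\beta_0(q)=0$, which will not be needed). In particular, for any fixed $p=(x_0,y_0,z_0)\in r$ we have $X(p)=(0,0,\gamma(p))\in r$, which is exactly the hypothesis of Theorem \ref{teo-fluxo-1}(1).

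I would then simply invoke Theorem \ref{teo-fluxo-1}(1) to conclude that $q$ is a non-hyperbolic equilibrium of the adjoint vector field \eqref{sis-impasse-casoparticular-regularizado-grau1}. If a self-contained argument is preferred, I would instead substitute $f(q)=g(q)=0$ (valid since $r\subset\mathcal{I}_M$) together with $\alpha(q)=0$ into the Jacobian \eqref{sis-impasse-matrixjacobiana-adjunto1}: every term in the top row carries a factor of $f$ or of $\alpha$, so that row vanishes at $q$, leaving a lower-triangular matrix with diagonal entries $0$ and $f_y(q)\beta_0(q)+g_y(q)\beta_1(q)$. Since $0$ is an eigenvalue, $q$ is non-hyperbolic. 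There is essentially no genuine obstacle here; the only delicate point is the geometric translation in the first step, namely that for a line parallel to the $z$-axis invariance is equivalent to the vanishing of the horizontal components of $X$ along $r$ — after which the statement follows directly from the already-established theorem.
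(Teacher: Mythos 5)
Your proposal is correct and follows exactly the paper's route: the paper's own proof is the one-liner ``for all $p\in r$, $X(p)\in r$, so apply Theorem \ref{teo-fluxo-1}(1),'' which is precisely your main argument, with your translation of invariance into the vanishing of the horizontal components of $X$ along $r$ being the (correct) unpacking of that step. Your optional direct Jacobian computation is a harmless bonus not present in the paper.
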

\begin{proof}
	Note that for all $p\in r$,  $X(p)\in r$. By the first item of the Theorem \eqref{teo-fluxo-1}, the Corollary is true.
\end{proof}

\begin{corollary}\label{teo-fluxo-2-hiperbolico}
If $q = (x_{0}, y_{0})$ is a hyperbolic RK-singularity of first kind of the adjoint 
system \eqref{sis-impasse-casoparticular-regularizado-grau1}
then the line $r = \{(x_{0}, y_{0}, z)|z\in\mathbb{R}\}\subset \mathcal{I}_{M}$ satisfies:
	\begin{enumerate}
		\item The vector field \eqref{sis-impasse-grau1} is transversal to $r$.
		\item There are no singular point of $M$ in $r$.
		\item The line $r$ does not contain any equilibrium points of system \eqref{sis-impasse-grau1}.
	\end{enumerate}
\end{corollary}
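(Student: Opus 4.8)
The plan is to read off all three conclusions from the structure of the linearization of the adjoint field at a RK-singularity of first kind, which was already computed in the proof of Proposition \ref{prop-singularidades-impasses}. First I would record what the hypotheses give at $q=(x_0,y_0)$: being an R-singularity of first kind means $g(q)=0$ (and $f(q)=0$ since $q\in\mathcal{I}$), while being a K-singularity means $f_y(q)=0$. Under these equalities the Jacobian of \eqref{sis-impasse-casoparticular-regularizado-grau1} at $q$ is the lower-triangular matrix
\[
J\widetilde{X}(q)=\begin{pmatrix} f_x\alpha & 0\\ f_x\beta_0+g_x\beta_1 & g_y\beta_1\end{pmatrix},
\]
so its eigenvalues are exactly the diagonal entries $f_x(q)\alpha(q)$ and $g_y(q)\beta_1(q)$. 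Hyperbolicity then forces both to be nonzero; in particular $\alpha(q)\neq 0$ (and $f_x(q)\neq 0$) and $g_y(q)\beta_1(q)\neq 0$. These two nonvanishing facts carry all three items.

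For statement (1), I note that $r$ is parallel to the $z$-axis, so $T_pr=\mathrm{span}\{(0,0,1)\}$ at every $p\in r$, and that every point of $r$ projects to $q$. Since the first component of \eqref{sis-impasse-grau1} is $\alpha$ and depends only on $(x,y)$, it equals $\alpha(q)\neq 0$ all along $r$; hence $X(p)$ always has a nonzero $x$-component and can never lie in $T_pr$, which is exactly transversality.

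For statement (2), I would argue by contradiction (equivalently, invoke the contrapositive of item (2) of Theorem \ref{teo-fluxo-1}). If some $p=(x_0,y_0,z_0)\in r$ were a singular point of $M$, then $\nabla H(p)=0$ would give in particular $f_y(q)z_0=g_y(q)$; since $f_y(q)=0$ this forces $g_y(q)=0$, whence $g_y(q)\beta_1(q)=0$, contradicting hyperbolicity. So no point of $r$ is singular for $M$.

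For statement (3), an equilibrium of \eqref{sis-impasse-grau1} at $p=(x_0,y_0,z_0)\in r$ would require $\alpha(x_0,y_0)=\alpha(q)=0$ from its first component, again contradicting $\alpha(q)\neq 0$; this is precisely item (3) of Theorem \ref{teo-fluxo-1}, so $r$ contains no equilibria. I do not expect a genuine obstacle here: the only point requiring care is to correctly extract from hyperbolicity of the triangular linearization the two inequalities $\alpha(q)\neq 0$ and $g_y(q)\beta_1(q)\neq 0$, after which each of the three items is a one-line consequence.
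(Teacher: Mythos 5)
Your proof is correct and follows essentially the same route as the paper, which leaves this corollary unproved as an immediate consequence of Theorem \ref{teo-fluxo-1}: items (1) and (2) are the contrapositives of items 1 and 2 of that theorem, and item (3) is its item 3 verbatim, all resting on the same triangular linearization yielding $f_x(q)\alpha(q)\neq 0$ and $g_y(q)\beta_1(q)\neq 0$. Your explicit unpacking of these facts matches the computations in the paper's proofs of Proposition \ref{prop-singularidades-impasses} and Theorem \ref{teo-fluxo-1}.
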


The next results concern to the flow in a line $r\subset\mathcal{I}_{M}$ which its 
projection is a non hyperbolic point for the adjoint vector field \eqref{sis-impasse-casoparticular-regularizado-grau1}.

\begin{theorem}\label{teo-fluxo-nao-hip-2}
Let $q = (x_{0}, y_{0})\in\mathcal{I}$ be an isolated R-singularity of \eqref{sis-impasse-casoparticular-regularizado-grau1} 
and $r$ be the line through $q$ such that $r\subset\mathcal{I}_{M}$.
\begin{enumerate}
  \item If $\nabla f(q)$ and $\nabla g(q)$ are linearly independent and $q$ is of first and second kind, 
  then $r$ is invariant by the flows of \eqref{sis-impasse-grau1}. Moreover, the linearization of 
  \eqref{sis-impasse-casoparticular-regularizado-grau1} computed at $q$ is zero.
  \item If $\nabla f(q)$ and $\nabla g(q)$ are linearly independent and $q$ is a RK-singularity, 
  then $r$ is invariant by the flows of \eqref{sis-impasse-grau1} if, and only if, $\alpha(q)$ = 0.
  \item If $\nabla f(q)$ and $\nabla g(q)$ are linearly dependent, then there is a singular point of $M$ in $r$.
\end{enumerate}
\end{theorem}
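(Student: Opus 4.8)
The plan is to combine the invariance identity $\langle\nabla H,X\rangle=\mu H$, restricted to the line $r$, with the elementary tangency criterion for the invariance of $r$. Writing $H=fz-g$ and $X$ as in \eqref{sis-impasse-grau1}, on $r=\{(x_0,y_0,z):z\in\mathbb{R}\}$ we have $H\equiv0$, so the left-hand side vanishes identically in $z$; collecting the coefficients of $z^2,z^1,z^0$ gives precisely the relations \eqref{eq-singularidades-impasse} evaluated at $q$, i.e. $f_y\beta_1=0$, $f_x\alpha+f_y\beta_0-g_y\beta_1=0$ and $g_x\alpha+g_y\beta_0=0$ at $q$. Since $r$ is parallel to the $z$-axis and $\alpha,\beta_0,\beta_1$ are independent of $z$, the line $r$ is invariant under \eqref{sis-impasse-grau1} if and only if its $x$- and $y$-components vanish along $r$, that is, if and only if $\alpha(q)=\beta_0(q)=\beta_1(q)=0$. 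One further fact I would record at the outset is that an R-singularity is, by Definition \ref{def-impasse-singularidades}, a \emph{regular} impasse point, so $\nabla f(q)=d(\det A)(q)\neq0$; this is what will rescue the degenerate case in item (3).

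For item (1), being of first and second kind means $f(q)=g(q)=\beta_1(q)=0$. Setting $\beta_1(q)=0$ in the last two relations above leaves the homogeneous linear system $f_x\alpha+f_y\beta_0=0$, $g_x\alpha+g_y\beta_0=0$ at $q$ in the unknowns $\alpha(q),\beta_0(q)$, whose determinant is $f_xg_y-f_yg_x=\det\big(\nabla f(q),\nabla g(q)\big)\neq0$ by linear independence; hence $\alpha(q)=\beta_0(q)=0$ and the tangency criterion gives invariance. The vanishing of the linearization then follows by substituting $f(q)=g(q)=\alpha(q)=\beta_0(q)=\beta_1(q)=0$ directly into \eqref{sis-impasse-matrixjacobiana-adjunto1}. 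For item (2), $q$ is additionally a K-singularity, so $f_y(q)=0$; then linear independence of $\nabla f(q)=(f_x(q),0)$ and $\nabla g(q)$ forces $f_x(q)\neq0$ and $g_y(q)\neq0$. The implication ``$r$ invariant $\Rightarrow\alpha(q)=0$'' is immediate; conversely, assuming $\alpha(q)=0$ and using $f_y(q)=0$ in the two relations gives $g_y(q)\beta_1(q)=0$ and $g_y(q)\beta_0(q)=0$, so $\beta_0(q)=\beta_1(q)=0$ since $g_y(q)\neq0$, and $r$ is invariant.

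For item (3) I would characterize singular points of $M$ on $r$: a point $(x_0,y_0,z_0)\in r$ is singular for $M$ exactly when $\nabla H=0$ there, i.e. when $z_0\,\nabla f(q)=\nabla g(q)$ (the remaining equation $f(q)=0$ holding automatically). The task is to produce such a $z_0$ out of the linear dependence of $\nabla f(q)$ and $\nabla g(q)$. The hard part --- really the only subtle point of the whole theorem --- is that linear dependence does not by itself put $\nabla g(q)$ on the line spanned by $\nabla f(q)$: this can fail when $\nabla f(q)=0$ but $\nabla g(q)\neq0$, in which case no $z_0$ exists. This is exactly where impasse-regularity enters: because $q$ is an R-singularity we have $\nabla f(q)\neq0$, so dependence forces $\nabla g(q)=\lambda\nabla f(q)$ for some $\lambda\in\mathbb{R}$, and then $z_0=\lambda$ gives a singular point of $M$ on $r$. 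I expect the bookkeeping of which of $f,g,\beta_1,f_y$ vanish in each type of singularity, together with this one appeal to regularity, to be the entire content; everything else is two-by-two linear algebra.
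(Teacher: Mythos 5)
Your proposal is correct and follows essentially the same route as the paper: both derive the relations \eqref{eq-singularidades-impasse} from $\langle\nabla H,X\rangle=\mu H$ restricted to $r$, reduce items (1) and (2) to the same $2\times2$ linear algebra in $\alpha(q),\beta_0(q)$ (resp.\ the forced vanishing of $\beta_0,\beta_1$ when $g_y(q)\neq0$), and prove item (3) by exhibiting $z_0$ with $\nabla g(q)=z_0\nabla f(q)$. The one place you go beyond the paper is in item (3), where you explicitly invoke the regularity of the impasse point (so $\nabla f(q)=d(\det A)(q)\neq 0$) to justify writing $\nabla g(q)$ as a multiple of $\nabla f(q)$ --- a step the paper's proof takes for granted when it asserts the existence of $c$ with $\nabla g=c\nabla f$.
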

\begin{proof}
Proposition \ref{prop-singularidades-impasses} implies that $q$ is a non hyperbolic point of 
the adjoint system \eqref{sis-impasse-casoparticular-regularizado-grau1}. Since $q$ is a
 R-singularity of first and second kind, then equation \eqref{eq-singularidades-impasse} becomes
$\alpha f_{x} + \beta_{0}f_{y} = 0, \ \alpha g_{x} + \beta_{0}g_{y} = 0.$ We can rewrite the previous equations as
${\langle}\big{(}\alpha(q), \beta_{0}(q)\big{)}, \nabla f(q){\rangle} = 0$, 
$ {\langle}\big{(}\alpha(q), \beta_{0}(q)\big{)}, \nabla g(q){\rangle} = 0.$
Since $\nabla f(q)$ and $\nabla g(q)$ are linearly independent, it follows  that 
$\big{(}\alpha(q), \beta_{0}(q)\big{)} = (0,0)$. Therefore we have 
$\alpha(q) = \beta_{0}(q) = \beta_{1}(q) = 0$ and $r$ is an invariant set for \eqref{sis-impasse-grau1}. 
Moreover, the linearization of \eqref{sis-impasse-casoparticular-regularizado-grau1} computed at $q$ is zero.
 For the second statement, since $q$ is a RK-singularity then $g_{y}(q)\neq 0$ and 
 equation \eqref{eq-singularidades-impasse} becomes
$\beta_{0} = -\frac{\alpha g_{x}}{g_{y}}, \ \beta_{1} = \frac{\alpha f_{x}}{g_{y}},$
and then $\alpha(q) = 0$ is a necessary and sufficient condition to assure that 
$\mathcal{I}_{M}$ is invariant. Finally, if $\nabla f(q)$ and $\nabla g(q)$ are 
linearly dependent then there is $c\neq0$ such that $\nabla g = c\nabla f$. 
Therefore we have
$\nabla H(q,z) = \Big{(}(z-c)f_{x}, (z-c)f_{y},0\Big{)},$
and then $(x_{0},y_{0},c)$ is a singular point of $M$.
\end{proof}

\begin{proposition}\label{teo-fluxo-nao-hip}
Let $p = (x_{0}, y_{0}, z_{0})\in\mathcal{I}_{M}$ be a singular point of $M$ and let 
$r$ be the line through $p$ such that $r\subset\mathcal{I}_{M}$. Then $X(p)\in r$ if, 
and only if, the linearization of the adjoint system
 \eqref{sis-impasse-casoparticular-regularizado-grau1} at $q = (x_{0},y_{0})$ is identically zero.
\end{proposition}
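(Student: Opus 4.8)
The plan is to reduce the whole statement to a single evaluation of the Jacobian \eqref{sis-impasse-matrixjacobiana-adjunto1} of the adjoint system \eqref{sis-impasse-casoparticular-regularizado-grau1} at the projected point $q=(x_0,y_0)$, using the algebraic conditions that characterize a singular point of $M$ sitting on $\mathcal{I}_M$. First I would record these conditions: since $p\in\mathcal{I}_M$ we have $f(q)=g(q)=0$, and since $p$ is a singular point of $M$, the computation $\nabla H(p)=0$ carried out in the proof of Proposition \ref{prop-superficie} gives $g_x(q)=z_0 f_x(q)$ and $g_y(q)=z_0 f_y(q)$, i.e. $\nabla g(q)=z_0\,\nabla f(q)$.

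Substituting $f(q)=g(q)=0$ into \eqref{sis-impasse-matrixjacobiana-adjunto1} annihilates every entry carrying a factor $f$ or $g$, and then replacing $g_x,g_y$ by $z_0 f_x,z_0 f_y$ makes the second row factor through $\beta_0+z_0\beta_1$. Thus I expect
\[
J\widetilde{X}(q)=\begin{pmatrix} \alpha(q) \\ \beta_0(q)+z_0\beta_1(q)\end{pmatrix}\begin{pmatrix} f_x(q) & f_y(q)\end{pmatrix},
\]
the outer product of the column $v=(\alpha(q),\,\beta_0(q)+z_0\beta_1(q))^{\top}$ with the row $\nabla f(q)^{\top}$; this is exactly the matrix already obtained in item 2 of Theorem \ref{teo-fluxo-1}. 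The decisive observation is that $v$ is precisely the horizontal part of $X(p)$: since $X(p)=(\alpha(q),\,\beta_1(q)z_0+\beta_0(q),\,\gamma(p))$ and $r$ is parallel to the $z$-axis, the condition $X(p)\in r$ is equivalent to $v=0$.

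From here the two implications are read directly off the outer-product form. If $X(p)\in r$ then $v=0$, so $J\widetilde{X}(q)=v\,\nabla f(q)^{\top}=0$, which settles the forward direction with no further work. For the converse, $J\widetilde{X}(q)=0$ forces the rank-one product $v\,\nabla f(q)^{\top}$ to vanish, and such a product is zero only when $v=0$ or $\nabla f(q)=0$.

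The hard part will be excluding the degenerate branch $\nabla f(q)=0$ in the converse. Note that $\nabla f(q)=0$ would force $\nabla g(q)=z_0\nabla f(q)=0$ as well, so $q$ would be a common critical point of $f$ and $g$ at which both functions vanish; moreover the invariance relations \eqref{eq-singularidades-impasse} then collapse to $0=0$ and impose nothing on $v$, so the equivalence could genuinely fail at such a point. I would therefore argue (or record as a standing hypothesis consistent with Proposition \ref{prop-superficie}(b) and the isolated-intersection assumption of Theorem \ref{teo-fluxo-1}) that $\nabla f(q)\neq 0$ at a projected impasse point; granting this, $J\widetilde{X}(q)=0$ yields $v=0$, that is $X(p)\in r$, which completes the equivalence.
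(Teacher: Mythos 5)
Your proposal is correct and follows essentially the same route as the paper: substitute the singularity conditions $f(q)=g(q)=0$, $\nabla g(q)=z_0\nabla f(q)$ into the Jacobian to obtain the rank-one form, identify its vanishing with $\alpha(q)=0$ and $\beta_0(q)+z_0\beta_1(q)=0$, and rule out the degenerate branch via $\nabla f(q)\neq 0$. The paper disposes of that last point exactly as you anticipated, by invoking the standing assumption that $\mathcal{I}$ is a regular impasse curve (so $f_x(q)\neq 0$ or $f_y(q)\neq 0$).
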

\begin{proof}
Since $p$ is singular, we have the equations
$
f_{x}(x_{0},y_{0})z_{0} = g_{x}(x_{0},y_{0})$, $ f_{y}(x_{0},y_{0})z_{0} = g_{y}(x_{0},y_{0})$,  $f(x_{0},y_{0}) = 0,
$
and therefore the linearization of \eqref{sis-impasse-casoparticular-regularizado-grau1} at $q$ is
$
J\widetilde{X}(q) = \left(
\begin{array}{cc}
f_{x}\alpha & f_{y}\alpha \\
f_{x}(\beta_{0} + z_{0}\beta_{1}) & f_{y}(\beta_{0} + z_{0}\beta_{1}) \\
\end{array}
\right).
$

Remember that we are supposing that $\mathcal{I}$ is regular, thus $f_{x}(q) \neq 0$ 
or $f_{y}(q)\neq 0$. Then $J\widetilde{X}(q)$ is identically null if, and only if, 
$\alpha(q) = 0$ and $\beta_{0}(q) + z_{0}\beta_{1}(q) = 0$.
\end{proof}

\begin{corollary}
Let $r = \{(x_{0},y_{0},z)|z\in\mathbb{R}\}\subset\mathcal{I}_{M}$ be a line 
parallel to the $z$-axis. If $r$ is a singular subset of $M$, then $r$ is invariant by 
\eqref{sis-impasse-grau1} if, and only if, the linearization of 
\eqref{sis-impasse-casoparticular-regularizado-grau1} at $q = (x_{0}, y_{0})$ is identically null.
\end{corollary}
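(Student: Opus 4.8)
The plan is to reduce the Corollary to Proposition \ref{teo-fluxo-nao-hip} applied pointwise along $r$. First I would record the elementary observation that a line $r$ parallel to the $z$-axis is invariant by \eqref{sis-impasse-grau1} precisely when $X$ is tangent to $r$ at every one of its points; since the tangent direction of $r$ is $(0,0,1)$, this is equivalent to the vanishing of the first two components of $X$ along $r$, that is, to $X(p)\in r$ for every $p\in r$. (Concretely, along $r$ one has $\dot{x}=\alpha(q)$ and $\dot{y}=\beta_{1}(q)z+\beta_{0}(q)$, so invariance means $\alpha(q)=\beta_{0}(q)=\beta_{1}(q)=0$, but for the proof it is cleaner to keep the geometric formulation $X(p)\in r$.)

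Next, because $r$ is a singular subset of $M$, each point $p=(x_{0},y_{0},z)\in r$ is a singular point of $M$, so Proposition \ref{teo-fluxo-nao-hip} is available at every such $p$. It yields the equivalence $X(p)\in r \Leftrightarrow J\widetilde{X}(q)=0$, where $q=(x_{0},y_{0})$ and $J\widetilde{X}(q)$ denotes the linearization of the adjoint system \eqref{sis-impasse-casoparticular-regularizado-grau1} at $q$. Thus the first step has already been translated, point by point, into a statement about the vanishing of $J\widetilde{X}(q)$.

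The crucial point—and the only step requiring care—is that $J\widetilde{X}(q)$ is a single matrix attached to the planar point $q\in\R^{2}$: being the Jacobian of the planar adjoint field \eqref{sis-impasse-casoparticular-regularizado-grau1}, whose entries are given by \eqref{sis-impasse-matrixjacobiana-adjunto1}, it does not depend on the $z$-coordinate of the chosen point $p\in r$. Consequently the right-hand side of the equivalence furnished by Proposition \ref{teo-fluxo-nao-hip} is the \emph{same} for all $p\in r$, so the pointwise condition collapses into a single global one. Chaining the three steps then gives $r \text{ invariant} \Leftrightarrow X(p)\in r \text{ for all } p\in r \Leftrightarrow J\widetilde{X}(q)=0$, which is exactly the assertion of the Corollary. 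The main obstacle, therefore, is not any computation but the verification that the linearization of the adjoint field is intrinsic to $q$ and hence uniform along the fiber $r$; once this is observed, the biconditional is immediate.
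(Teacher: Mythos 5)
Your proposal is correct and follows essentially the same route as the paper, whose proof simply observes that invariance of the vertical line $r$ amounts to $X(p)\in r$ for every $p\in r$ and then invokes Proposition \ref{teo-fluxo-nao-hip} pointwise. Your explicit remark that $J\widetilde{X}(q)$ is a single matrix attached to the planar point $q$ (independent of the $z$-coordinate of $p$), so the pointwise equivalences collapse to one condition, is left implicit in the paper but is exactly the right justification.
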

\begin{proof}
Observe that for all $p\in r$ we have $X(p)\in r$.
\end{proof}

\section{Examples}\label{sec-examples}
In this section we apply our results to four well-known equations: Falkner-Skan equation (derived from fluid dynamics); 
Lorenz equation (meteorological studies); Chen equation (shows chaotic behavior) and  Fisher-Kolmogorov equation 
(related to population dynamics).

\subsection{Falkner-Skan equation}

\noindent

The Falkner-Skan equation was studied in \cite{FalknerSkan} and it is given by
$f''' + ff' + \lambda(1 - (f')^{2}) = 0,$
where $\lambda$ is a parameter. This equation describes a model in fluid dynamics and 
it describes a model of the steady two-dimensional flow of a slightly viscous incompressible 
fluid past a wedge. We can express this equation as a system of differential equations
\begin{equation}\label{sis-falkner}
\dot{x} = y, \ \dot{y} = z, \ \dot{z} = -xz -\lambda(1 - y^{2}).
\end{equation}

In \cite{LlibreValls} the authors proved that $H(x,y,z) = 2xz + (1 - y^{2})$ is the only 
Darboux polynomial of \eqref{sis-falkner} when $\lambda = \frac{1}{2}$. Observe that
${\langle} \Big{(}H_x,H_y,H_z\Big{)}, X {\rangle} = -xH(x,y,z).$
Denote $M = \{H(x,y,z) = 0\}$. The pseudo impasse set $\mathcal{I}_{M}\subset M$ is 
given by the lines $r_{1,2} = \{(0,\pm1,z),z\in\mathbb{R}\}$. Moreover, $M$ is a regular surface.

Let $f(x,y) = 2x$ and $g(x,y) = y^{2} -1$. The dynamics of \eqref{sis-falkner} on $M$ are described by
\begin{equation}\label{sis-falkner-plano}
\dot{x} = y, \ 2x\dot{y} = y^2 - 1,
\end{equation}
whose impasse curve is given by $\mathcal{I} = \{x = 0\}$. Its adjoint vector field is
\begin{equation}\label{sis-falkner-plano-adjunto}
\dot{x} = 2xy, \ \dot{y} = y^2 - 1,
\end{equation}
and the equilibrium points $p_{1,2} = (0, \pm 1)$ are RK-singularities of first kind on $\mathcal{I}$. 
Note that the points $p_{1,2}$ are the projections of the lines $r_{1,2}$. Observe that there are no 
equilibrium points of \eqref{sis-falkner-plano-adjunto} outside $\mathcal{I}$, therefore there are
 no equilibrium points of \eqref{sis-falkner} on $\mathcal{G}_{M}$ when $\lambda = \frac{1}{2}$.

The linearization of \eqref{sis-falkner-plano-adjunto} is given by
$
J\widetilde{X}(x,y) = \left(
\begin{array}{cc}
2y & 2x \\
0 & 2y \\
\end{array}
\right),
$
and thus $p_{1}$ is an unstable hyperbolic node and $p_{2}$ is a stable hyperbolic node for the 
adjoint vector field \eqref{sis-falkner-plano-adjunto}. It follows from Theorem \ref{teo-fluxo-2-hiperbolico} that 
the flows of \eqref{sis-falkner} with $\lambda = \frac{1}{2}$ is transversal to $\mathcal{I}_{M}$ and $\mathcal{I}_{M}$ 
does not contain any equilibrium point of \eqref{sis-falkner}. See figure \ref{fig-exemplo-falkner}.

Moreover, note that when we project the flows of \eqref{sis-falkner} with $\lambda = \frac{1}{2}$ 
on the $xz$-plane we obtain the systems
$
\dot{x} = \pm\sqrt{xz + 1}, \ \dot{z} = 0,
$
where $\dot{z} = 0$ means that the trajectories of \eqref{sis-falkner} on $M$ are contained 
in planes parallel to the $xy$-plane. Another way to verify this fact is observing that  at 
the points of $M$ the system \eqref{sis-falkner} takes form
$
\dot{x} = y, \ \dot{y} = z, \ \dot{z} = 0.
$

\begin{figure}[h!]
	\center{\includegraphics[width=0.35\textwidth]{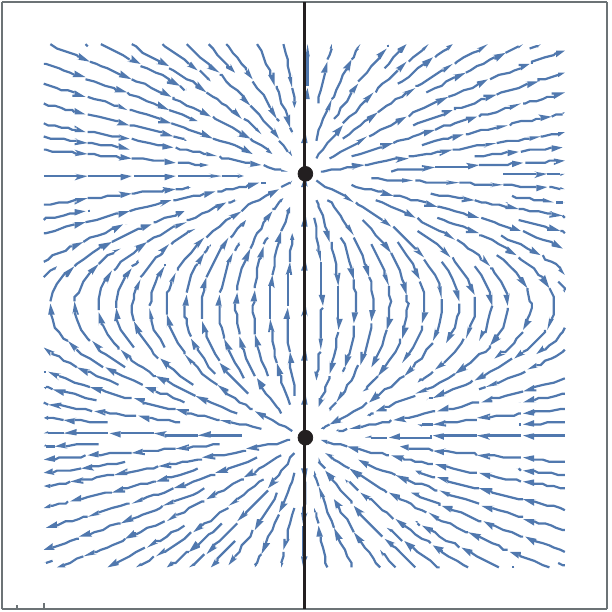}\hspace{0.5cm}\includegraphics[width=0.4\textwidth]{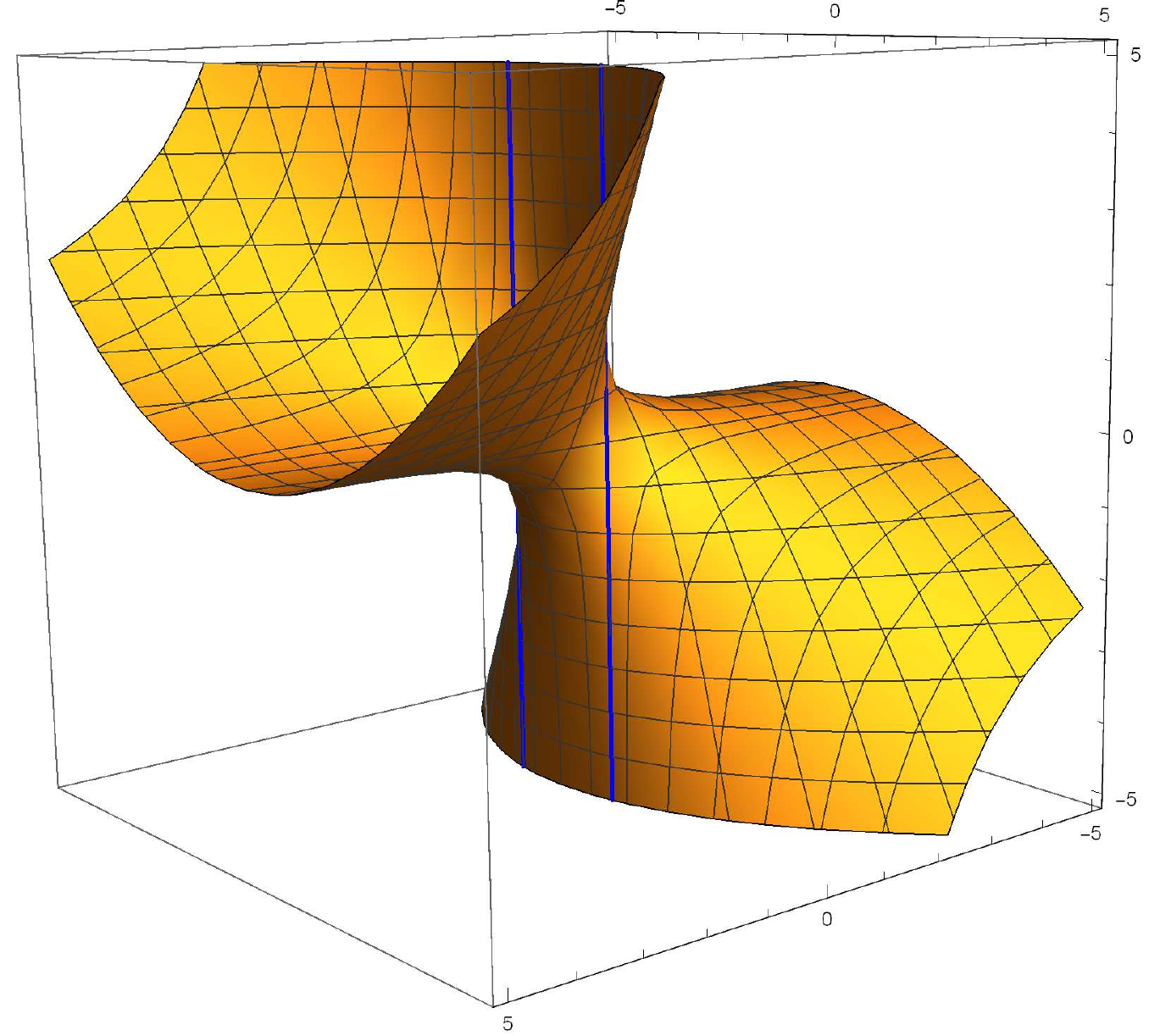}}
	\caption{\small{Phase portrait of \eqref{sis-falkner-plano} (left) and surface $M$ (right). The 
			pseudo impasse set $\mathcal{I}_{M}$ is denoted in blue.}}\label{fig-exemplo-falkner}
\end{figure}

\subsection{Lorenz System}

\noindent

The Lorenz System is given by
\begin{equation}\label{sis-Lorenz}
\dot{x} = s(-x+y), \ \dot{y} = rx - y - xz, \ \dot{z} = xy - bz,
\end{equation}
where $(x,y,z)\in\mathbb{R}^{3}$ are variables and $(s,r,b)\in\mathbb{R}^{3}$ are parameters. 
This model was proposed by Lorenz in 1963 (see \cite{Lorenz}) in order to study meteorological 
phenomena. When $(s,r,b) = (10,28, \frac{8}{3})$, this system presents chaotic behavior. 
In \cite{LlibreZhang} the authors gave all the six invariant algebraic surfaces for the 
Lorenz System \eqref{sis-Lorenz}. In \cite{CaoZhang} the flows on such invariant surfaces 
were considered and in \cite{LlibreMessiasSilva} the authors studied the global dynamics 
of \eqref{sis-Lorenz}.

Three out of six invariant algebraic surfaces can be written in the form 
$M = \{H^{-1}(0)\}$ with $H_{i}(x,y,z) = f_{i}(x,y)z-g_{i}(x,y)$, $i = 1, 2, 3$. They are:

\
\begin{center}
	\begin{tiny}
		\begin{tabular}{|c|c|c|c|}
			\hline
			Case & $(r,s,b)$ & $f_{i}(x,y)$ & $g_{i}(x,y)$ \\
			\hline
			(a) & $(r,0,\frac{1}{3})$ & $-\frac{4}{3}x^{2}$ & $\frac{4}{9}y^{2} + \frac{8}{9}xy - \frac{4}{3}rx^{2} - x^{4}$ \\
			(b) & $(r,1,4)$ & $-4x^{2} - 16(1-r)$ & $4rx^{2} - 8xy + 4y^{2} - x^{4}$ \\
			(c) & $(2s-1,s,6s-2)$ & $-4sx^{2}$ & $4s^{2}y^{2} - 4s(4s-2)xy + (4s-2)^{2}x^{2} - x^{4}$ \\
			\hline
		\end{tabular}
	\end{tiny}
\end{center}

\

In what follows, we analyze the flows of \eqref{sis-Lorenz} on such surfaces.

\textit{Case (a).} Since $(r,s,b) = (r,0,\frac{1}{3})$, system \eqref{sis-Lorenz} takes form
\begin{equation}\label{sis-lorenz-casoa-r3}
\dot{x} = \frac{1}{3}(-x+y), \ \dot{y} = rx - y - xz, \ \dot{z} = xy,
\end{equation}
and its flows on $M_{1} = \{H_{1}(x,y,z) = 0\}$ is described by the constrained system
\begin{equation}\label{sis-lorenz-casoa-impasse}
\dot{x}  =  \frac{1}{3}(-x+y), \ \frac{4}{3}x\dot{y}  =  \frac{4}{3}x(rx - y) + \frac{4}{9}y^{2} + \frac{8}{9}xy - \frac{4}{3}rx^{2} - x^{4},
\end{equation}
whose adjoint vector field is given by
\begin{equation}\label{sis-lorenz-casoa-adjunto}
\dot{x}  =  \frac{4}{9}x(-x+y), \ \dot{y}  =  \frac{4}{3}x(rx - y) + \frac{4}{9}y^{2} + \frac{8}{9}xy - \frac{4}{3}rx^{2} - x^{4}.
\end{equation}

The origin $(0,0)\in\mathcal{I}_{1} = \{x = 0\}$ is the only equilibrium point for \eqref{sis-lorenz-casoa-adjunto}. 
Since $\mathcal{I}_{M_{1}} = \{(0,0,z)| z\in\mathbb{R}\}$ is a singular subset of $M_{1}$ and 
the linearization of \eqref{sis-lorenz-casoa-adjunto} computed at $(0,0)$ is zero, 
it follows from Proposition \ref{teo-fluxo-nao-hip} that $\mathcal{I}_{M_{1}}$ is an 
invariant set of \eqref{sis-lorenz-casoa-r3}. This fact also can be verified observing 
that all points on $\mathcal{I}_{M_{1}}$ are equilibrium points. Notice that outside 
$\mathcal{I}_{1}$ there are no equilibrium points for \eqref{sis-lorenz-casoa-adjunto}, 
therefore there are no equilibrium points of \eqref{sis-lorenz-casoa-r3} on 
$\mathcal{G}_{M_{1}}$. See figure \ref{fig-exemplo-lorenz-casoa}.

\begin{figure}[h!]
	\center{\includegraphics[width=0.35\textwidth]{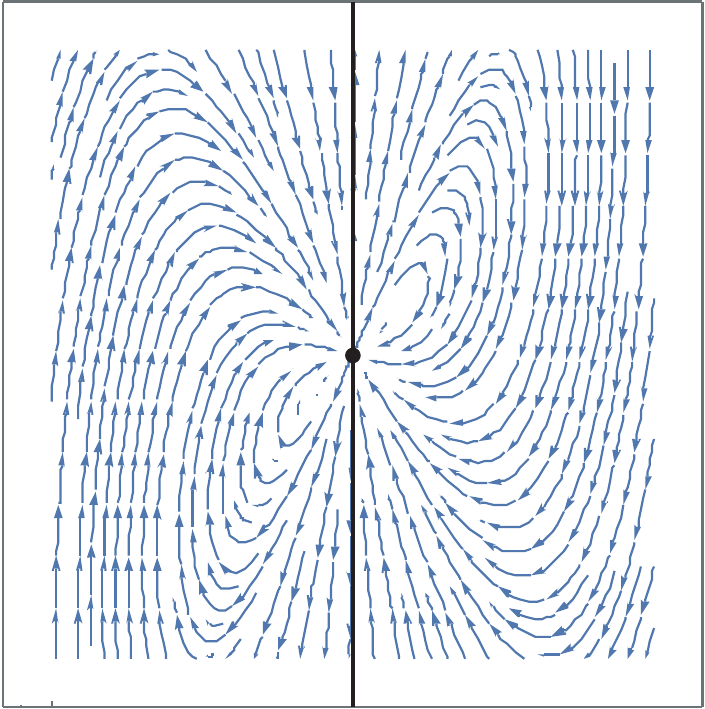}\hspace{0.5cm}\includegraphics[width=0.4\textwidth]{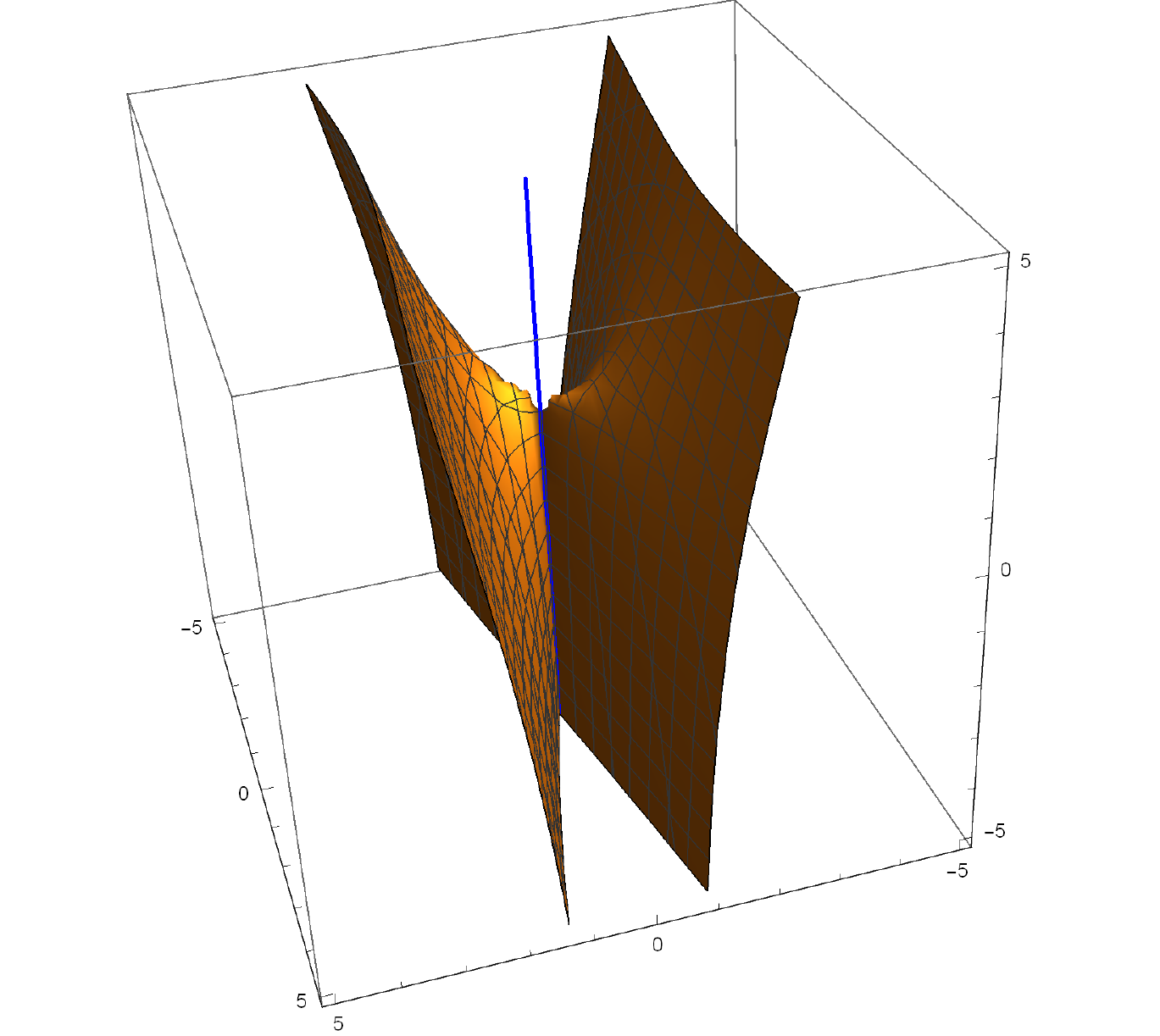}}
	\caption{\small{Phase portrait of \eqref{sis-lorenz-casoa-impasse} (left) and surface $M_{1}$ (right). 
			The pseudo impasse set $\mathcal{I}_{M}$ is denoted in blue.}}\label{fig-exemplo-lorenz-casoa}
\end{figure}

\textit{Case (b).} Suppose $(r,s,b) = (r,1,4)$. For $r < 1$ the function $f_{2}$ is always nonzero, for $r = 1$
 it has only one root and for $r > 1$ it has two roots. Since we are interested in the cases where $f_{2}$ 
 has real roots (because it is in this case that constrained systems rise), we will study the cases where $r = 1$ and $r > 1$.

For $r = 1$, system \eqref{sis-Lorenz} is
\begin{equation}\label{sis-lorenz-casob-r3-igual}
\dot{x} = -x+y, \ \dot{y} = x - y - xz, \ \dot{z} = -4z + xy,
\end{equation}
and its flows on $M_{2} = \{-4x^{2}z -(4x^{2} - 8xy + 4y^{2} - x^{4})= 0\}$ are described by
\begin{equation}\label{sis-lorenz-casob-impasse-igual}
\dot{x}  =  -x+y, \ 4x\dot{y} =  4x(x - y) + (4x^{2} - 8xy + 4y^{2} - x^{4}),
\end{equation}
whose adjoint system is
\begin{equation}\label{sis-lorenz-casob-adjunto-igual}
\dot{x}  =  4x(-x+y), \ \dot{y}  =  4x(x - y) + (4x^{2} - 8xy + 4y^{2} - x^{4}).
\end{equation}

The origin $(0,0)\in\mathcal{I}_{2} = \{x = 0\}$ is the only equilibrium point of \eqref{sis-lorenz-casob-adjunto-igual}. 
Since $\mathcal{I}_{M_{2}} = \{(0,0,z)| z\in\mathbb{R}\}$ is a singular set of $M_{2}$ and the 
linearization of \eqref{sis-lorenz-casob-adjunto-igual} computed at $(0,0)$ is zero, it follows 
from Proposition \ref{teo-fluxo-nao-hip} that $\mathcal{I}_{M_{2}}$ is invariant for \eqref{sis-lorenz-casob-r3-igual}. 
This fact also can be checked observing that at the points on $\mathcal{I}_{M_{2}}$ the vector field
 \eqref{sis-lorenz-casob-r3-igual} is $\big{(}0,0,-4z\big{)}$. See figure \ref{fig-exemplo-lorenz-casob-igual}.

For $r > 1$, system \eqref{sis-Lorenz} is
\begin{equation}\label{sis-lorenz-casob-r3-maior}
\dot{x} = -x+y, \ \dot{y} = rx - y - xz, \ \dot{z} = -4z + xy.
\end{equation}

Denote
$A(x,r) = -4x^{2} - 16(1-r), \ M_{2} = \big{\{}A(x,r)z -(4rx^{2} - 8xy + 4y^{2} - x^{4})= 0\big{\}}.$
The flow of \eqref{sis-lorenz-casob-r3-maior} on $M_{2}$ is described by the constrained system
\begin{equation}\label{sis-lorenz-casob-impasse-maior}
\dot{x} = -x+y, \ A(x,r)\dot{y} = A(x,r)(rx - y) - x(4rx^{2} - 8xy + 4y^{2} - x^{4}),
\end{equation}
whose adjoint system is
\begin{equation}\label{sis-lorenz-casob-adjunto-maior}
\dot{x} =  A(x,r)(-x+y), \ \dot{y} = A(x,r)(rx - y) - x(4rx^{2} - 8xy + 4y^{2} - x^{4}).
\end{equation}

In this case, $\mathcal{I}^{\pm}_{2} = \{(\pm2\sqrt{r-1}, y), y\in\mathbb{R}\}$ are impasse curves 
for system \eqref{sis-lorenz-casob-impasse-maior}. The pseudo impasse set in $M_{2}$ are the 
lines $$\mathcal{I}^{\pm}_{M_{2}} = \{(\pm2\sqrt{r-1},\pm2\sqrt{r-1},z), z\in\mathbb{R}\}.$$

Observe that $(\pm2\sqrt{r-1},\pm2\sqrt{r-1}, r-1)\in\mathcal{I}^{\pm}_{M_{2}}$ are singular points of 
$M_{2}$ and equilibrium points for \eqref{sis-lorenz-casob-r3-maior}. However, all the 
other points on $\mathcal{I}^{\pm}_{M_{2}}$ are regular points. System \eqref{sis-lorenz-casob-adjunto-maior} 
has three equilibrium points, and two of them are on the impasse curve. More precisely, 
$(\pm2\sqrt{r-1},\pm2\sqrt{r-1})\in\mathcal{I}^{\pm}_{2}$. The linearization of 
\eqref{sis-lorenz-casob-adjunto-maior} computed in such points is identically zero. 
The third equilibrium point of \eqref{sis-lorenz-casob-adjunto-maior} is the origin and 
it is a hyperbolic saddle. Therefore, there is a saddle point in $\mathcal{G}_{M_{2}}$. 
See figure \ref{fig-exemplo-lorenz-casob-maior}.

\begin{figure}[h!]
	\center{\includegraphics[width=0.35\textwidth]{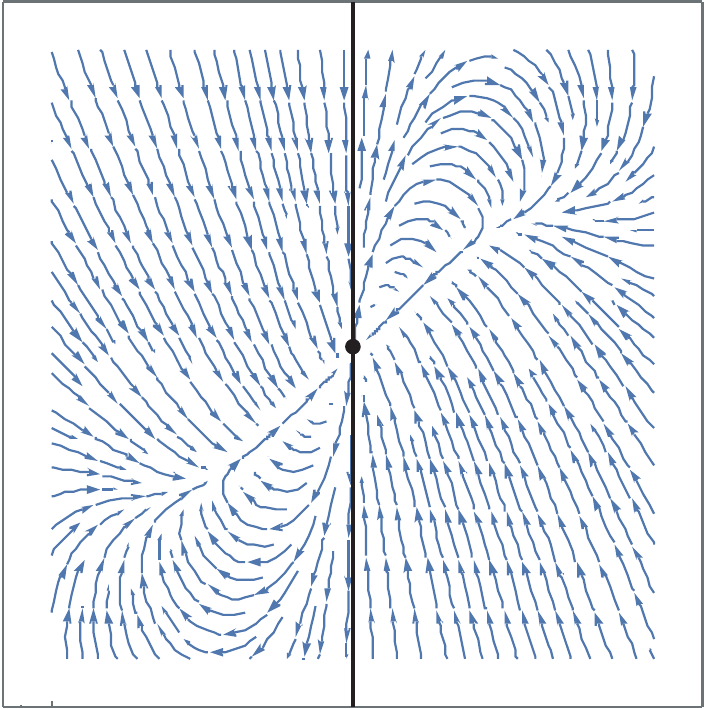}\hspace{0.5cm}\includegraphics[width=0.4\textwidth]{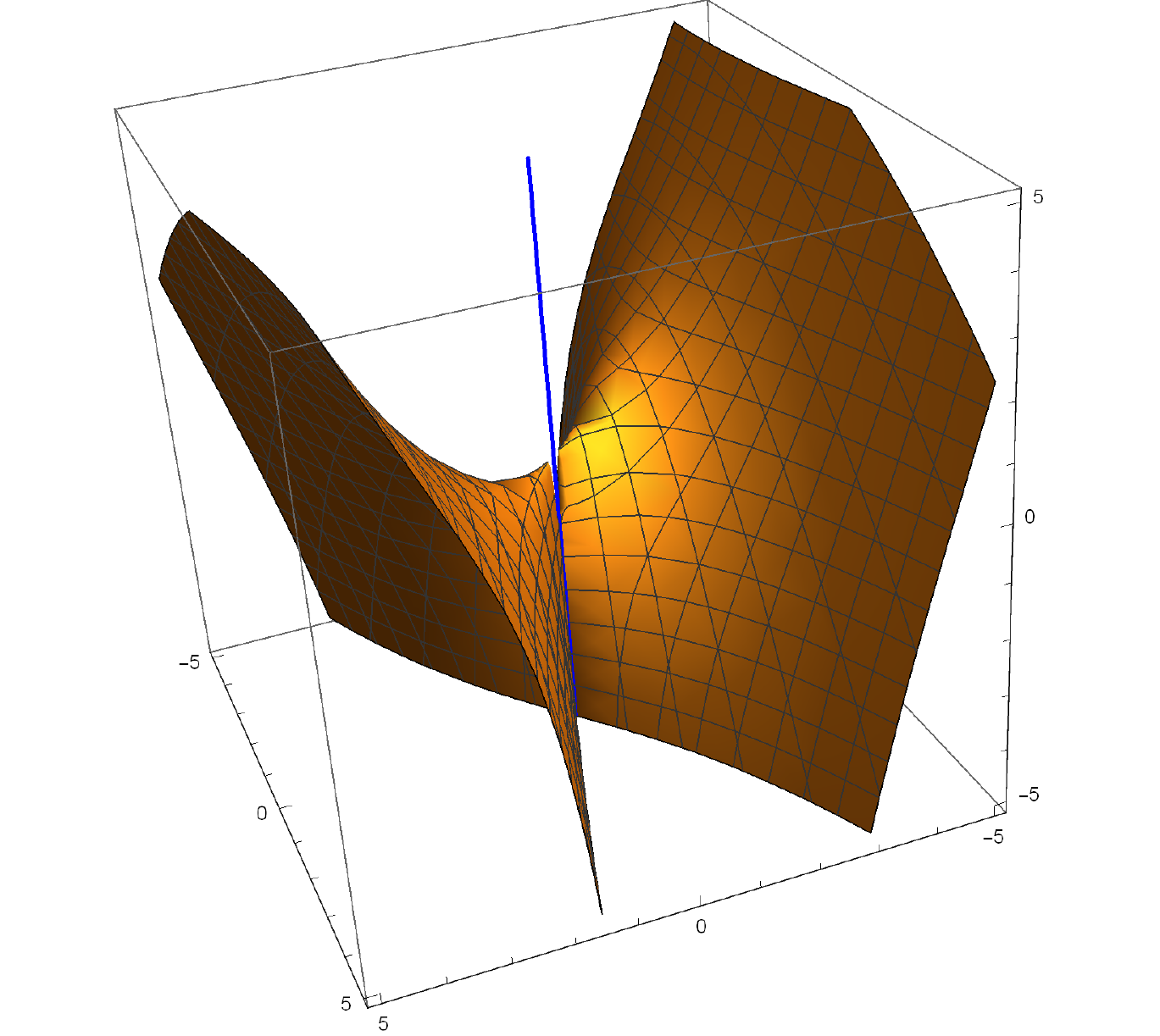}}
	\caption{\small{Phase portrait of \eqref{sis-lorenz-casob-impasse-igual} (left) and surface $M_{2}$ for $r = 1$ (right). 
			The pseudo impasse set $\mathcal{I}_{M}$ is denoted in blue.}}\label{fig-exemplo-lorenz-casob-igual}
\end{figure}

\begin{figure}[h!]
	\center{\includegraphics[width=0.35\textwidth]{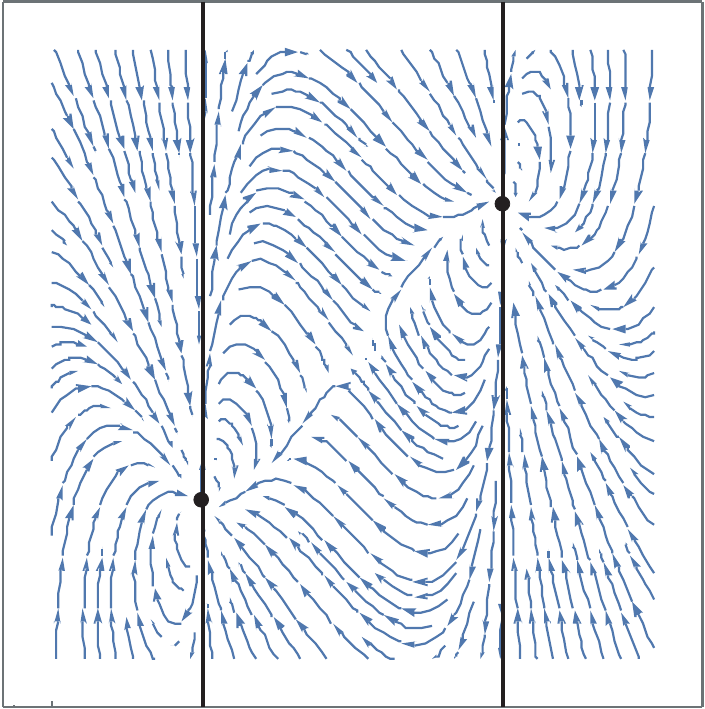}\hspace{0.5cm}\includegraphics[width=0.4\textwidth]{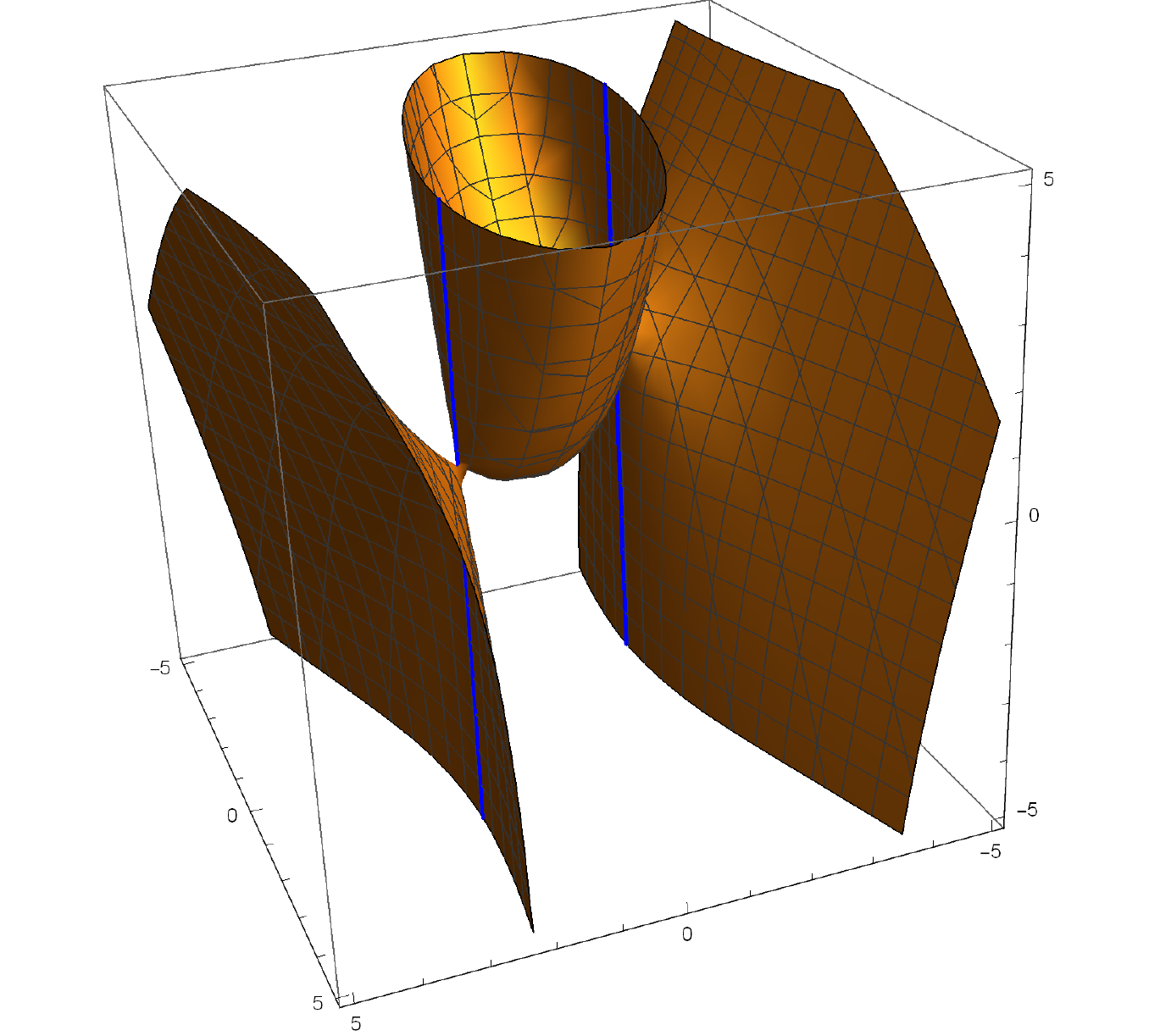}}
	\caption{\small{Phase portrait of system \eqref{sis-lorenz-casob-impasse-maior} (left) and 
			surface $M_{2}$ for $r > 1$ (right). The pseudo impasse set $\mathcal{I}_{M}$ is 
			denoted in blue.}}\label{fig-exemplo-lorenz-casob-maior}
\end{figure}

\textit{Case (c).} Since $(r,s,b) = (2s-1,s,6s-2)$, system \eqref{sis-Lorenz} takes form
\begin{equation}\label{sis-lorenz-casoc-r3}
\dot{x} = s(-x+y), \ \dot{y} = (2s-1)x - y - xz, \ \dot{z} = -(6s-2)z + xy,
\end{equation}

Denote
$g_{3}(x,y) = 4s^{2}y^{2} + (4s-2)^{2}x^{2} - 4s(4s-2)xy - x^{4}.$
The flow of \eqref{sis-lorenz-casoc-r3} on $M_{3} = \{H_{3}(x,y,z) = 0\}$ is given by
\begin{equation}\label{sis-lorenz-casoc-impasse}
\dot{x} = s(-x+y), \ 4sx\dot{y} = 4sx\big{(}(2s-1)x - y\big{)} + g_{3}(x,y),
\end{equation}
whose adjoint system is
\begin{equation}\label{sis-lorenz-casoc-adjunto}
\dot{x} = 4s^{2}x(-x+y), \ \dot{y} = 4sx\big{(}(2s-1)x - y\big{)} + g_{3}(x,y).
\end{equation}

The impasse curve is given by $\mathcal{I}_{3} = \{x = 0\}$ and the origin $(0,0)$ is the only equilibrium 
point for \eqref{sis-lorenz-casoc-adjunto}. Since $\mathcal{I}_{M_{3}} = \{(0,0,z)| z\in\mathbb{R}\}$ 
is a singular set of $M_{3}$ and the linearization of \eqref{sis-lorenz-casoc-adjunto} computed at $(0,0)$ 
is zero, it follows from Proposition \ref{teo-fluxo-nao-hip} that $\mathcal{I}_{M_{3}}$ is invariant by 
the flows of \eqref{sis-lorenz-casoc-r3}. This fact also can be checked observing that on 
$\mathcal{I}_{M_{3}}$ the system \eqref{sis-lorenz-casoc-r3} is $X = \big{(}0,0,-(6s-2)z\big{)}$.

Observe that outside $\mathcal{I}_{3}$ system \eqref{sis-lorenz-casoc-adjunto} has two equilibrium 
points given by $(\pm2\sqrt{1 + 3s^{2} - 4s}, \pm2\sqrt{1 + 3s^{2} - 4s})$. Such points are different
 when  $s > 1$ or $s < \frac{1}{3}$ and they collide at the origin when $s = 1$ or $s = \frac{1}{3}$. 
 Therefore there are two equilibrium points of \eqref{sis-lorenz-casoc-r3} on $\mathcal{G}_{M_{3}}$
  when $s > 1$ or $s < \frac{1}{3}$. See figure \ref{fig-exemplo-lorenz-casoc}.

\begin{figure}[h!]
	\center{\includegraphics[width=0.35\textwidth]{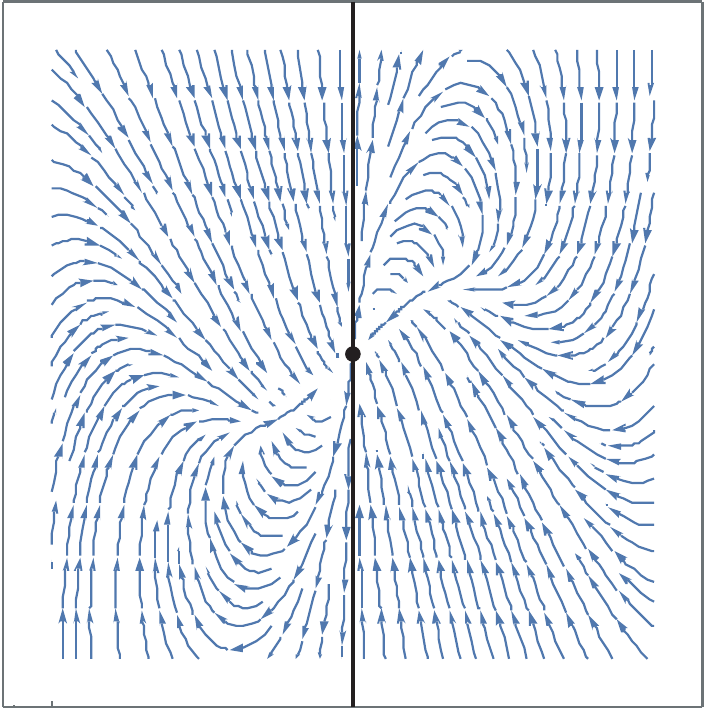}\hspace{0.5cm}\includegraphics[width=0.4\textwidth]{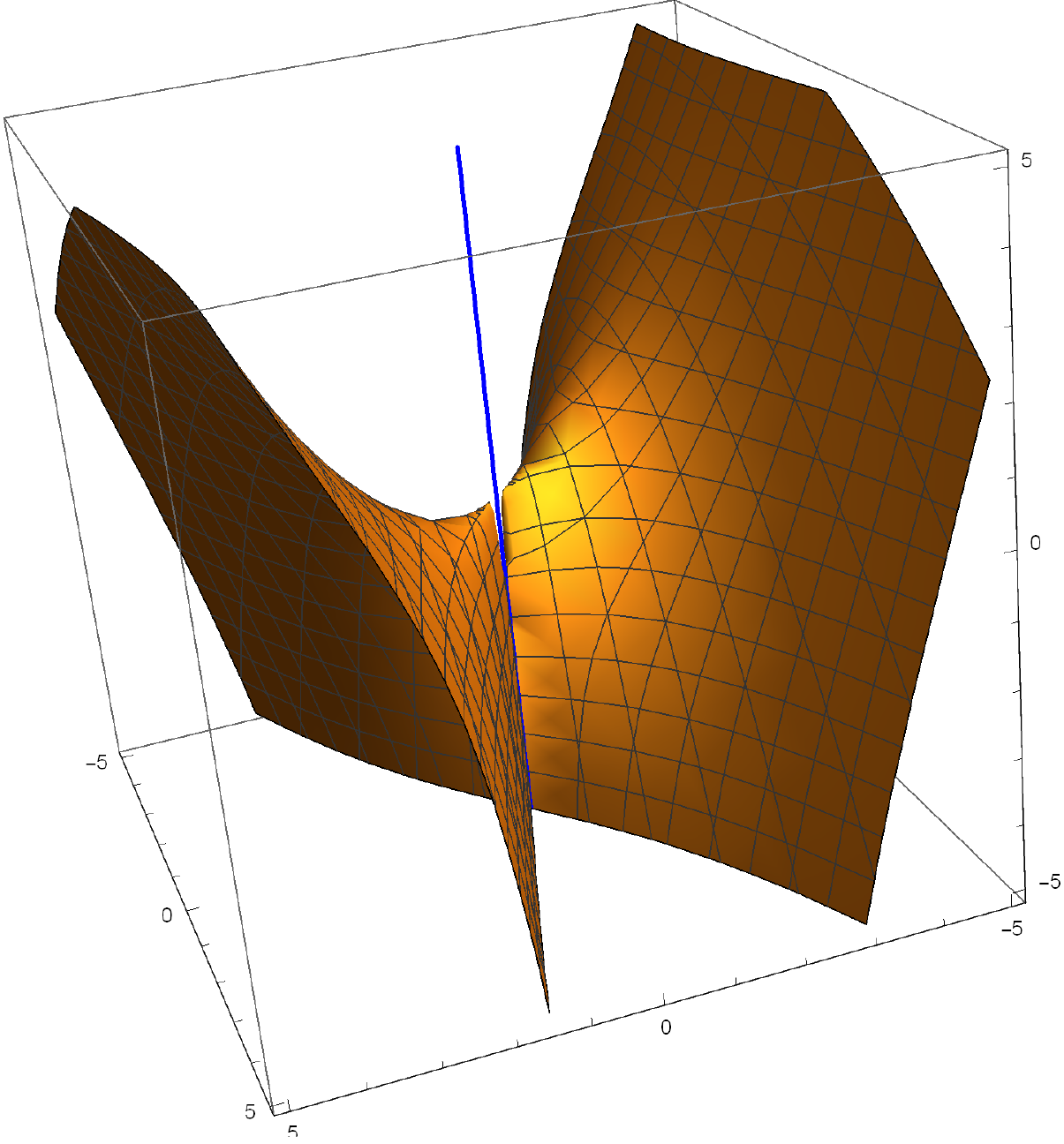}}
	\caption{\small{Phase portrait of System \eqref{sis-lorenz-casoc-impasse} (left) and surface $M_{3}$ for $s = 1$ (right). 
			The pseudo impasse set $\mathcal{I}_{M}$ is denoted in blue.}}\label{fig-exemplo-lorenz-casoc}
\end{figure}

\subsection{Chen System}

\noindent

Chen System is given by
\begin{equation}\label{sis-Chen}
\dot{x} = a(-x+y), \ \dot{y} = (c-a)x +cy - xz, \ \dot{z} = xy - bz,
\end{equation}
where $(x,y,z)\in\mathbb{R}^{3}$ are variables and $(a,b,c)\in\mathbb{R}^{3}$ are parameters. 
This model was proposed for the first time in \cite{ChenUeta} and it shows a chaotic behavior for a 
convenient choice of $a$, $b$ and $c$. In \cite{LuZhang} was given six invariant algebraic surfaces 
for Chen System \eqref{sis-Chen}. In \cite{CaoChenZhang} the authors studied the dynamics of 
\eqref{sis-Chen} on such surfaces and in \cite{LlibreMessiasSilva2012} the global dynamics of 
\eqref{sis-Chen} was considered.

Two out of six invariant algebraic surfaces studied in such references can be written in the form 
$M = \{H^{-1}_{i}(0)\}$ com $H_{i}(x,y,z) = f_{i}(x,y)z-g_{i}(x,y)$, $i = 4,5$. They are:

\
\begin{center}
	\begin{tiny}
		\begin{tabular}{|c|c|c|c|}
			\hline
			Case & $(a,b,c)$ & $f_{i}(x,y)$ & $g_{i}(x,y)$ \\
			\hline
			(d) & $(-\frac{c}{3}, 0, c)$ & $\frac{4}{3}cx^{2}$ & $\frac{16}{9}c^{2}x^{2} + \frac{8}{9}c^{2}xy + \frac{4}{9}c^{2}y^{2} - x^{4}$ \\
			(e) & $(-c,-4c,c)$ & $4cx^{2} + 48c^{3}$ & $4c^{2}y^{2} - 8c^{2}xy - 8c^{2}x^{2} - x^{4}$ \\
			\hline
		\end{tabular}
	\end{tiny}
\end{center}

\

Observe that for all $c\in\mathbb{R}$ the function $f_{5}(x,y) = 4cx^{2} + 48c^{3}$ is always non zero. 
This implies that the pseudo impasse set $\mathcal{I}_{M_{5}}$ is empty and the flows on $M_{5}$ 
are described by a smooth system.

When $(a,b,c) = (-\frac{c}{3}, 0, c)$, system \eqref{sis-Chen} is
\begin{equation}\label{sis-chen-caso1}
\dot{x} = -\frac{c}{3}(-x+y), \ \dot{y} = c(\frac{4}{3}x + y) - xz, \ \dot{z} = xy,
\end{equation}
and its flows on $M_{4} = \{H_{4}(x,y,z) = 0\}$ are described by
\begin{equation}\label{sis-chen-caso1-impasse}
\dot{x} = -\frac{c}{3}(-x+y), \ \frac{4}{3}cx\dot{y} = \frac{4}{3}c^{2}x\big{(}\frac{4}{3}x + y\big{)} - g_{5}(x,y),
\end{equation}
whose adjoint system is
\begin{equation}\label{sis-chen-caso1-adjunto}
\dot{x} = \frac{4}{9}c^{2}x(x-y), \ \dot{y} = \frac{4}{3}c^{2}x\big{(}\frac{4}{3}x + y\big{)} - g_{5}(x,y).
\end{equation}

The impasse curve is given by $\mathcal{I}_{4} = \{x = 0\}$ and the origin $(0,0)$ is the only equilibrium 
point for \eqref{sis-chen-caso1-adjunto}. Since $\mathcal{I}_{M_{4}} = \{(0,0,z)| z\in\mathbb{R}\}$ 
is a singular set of $M_{4}$ and the linearization of \eqref{sis-chen-caso1-adjunto} computed at $(0,0)$ 
is zero, it follows from Proposition \ref{teo-fluxo-nao-hip} that $\mathcal{I}_{M_{4}}$ is an invariant set for the 
flows of \eqref{sis-chen-caso1}. Observe that outside $\mathcal{I}_{4}$ there are no 
equilibrium points for \eqref{sis-chen-caso1-adjunto}, therefore there are no equilibrium 
points for \eqref{sis-chen-caso1} on $\mathcal{G}_{M_{4}}$. See figure \ref{fig-exemplo-chen}.

\begin{figure}[h!]
	\center{\includegraphics[width=0.35\textwidth]{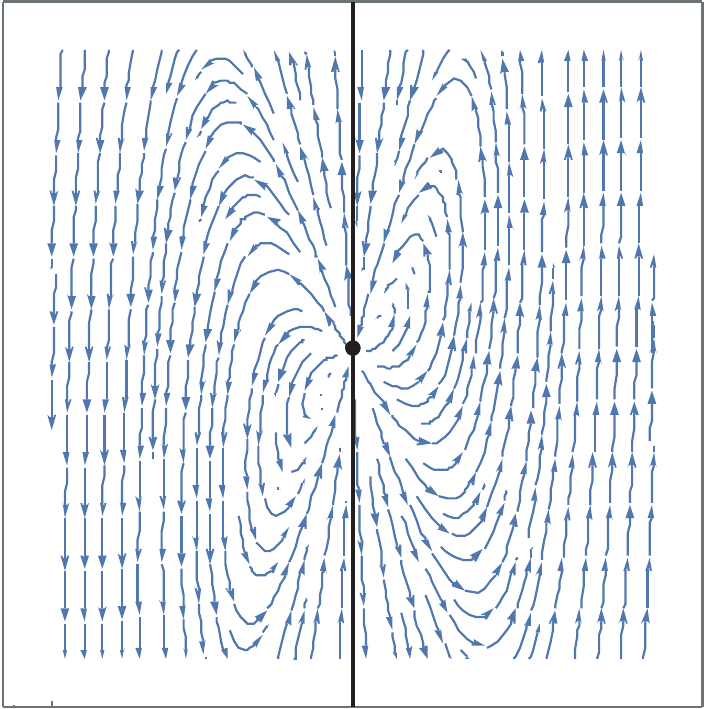}\hspace{0.5cm}\includegraphics[width=0.4\textwidth]{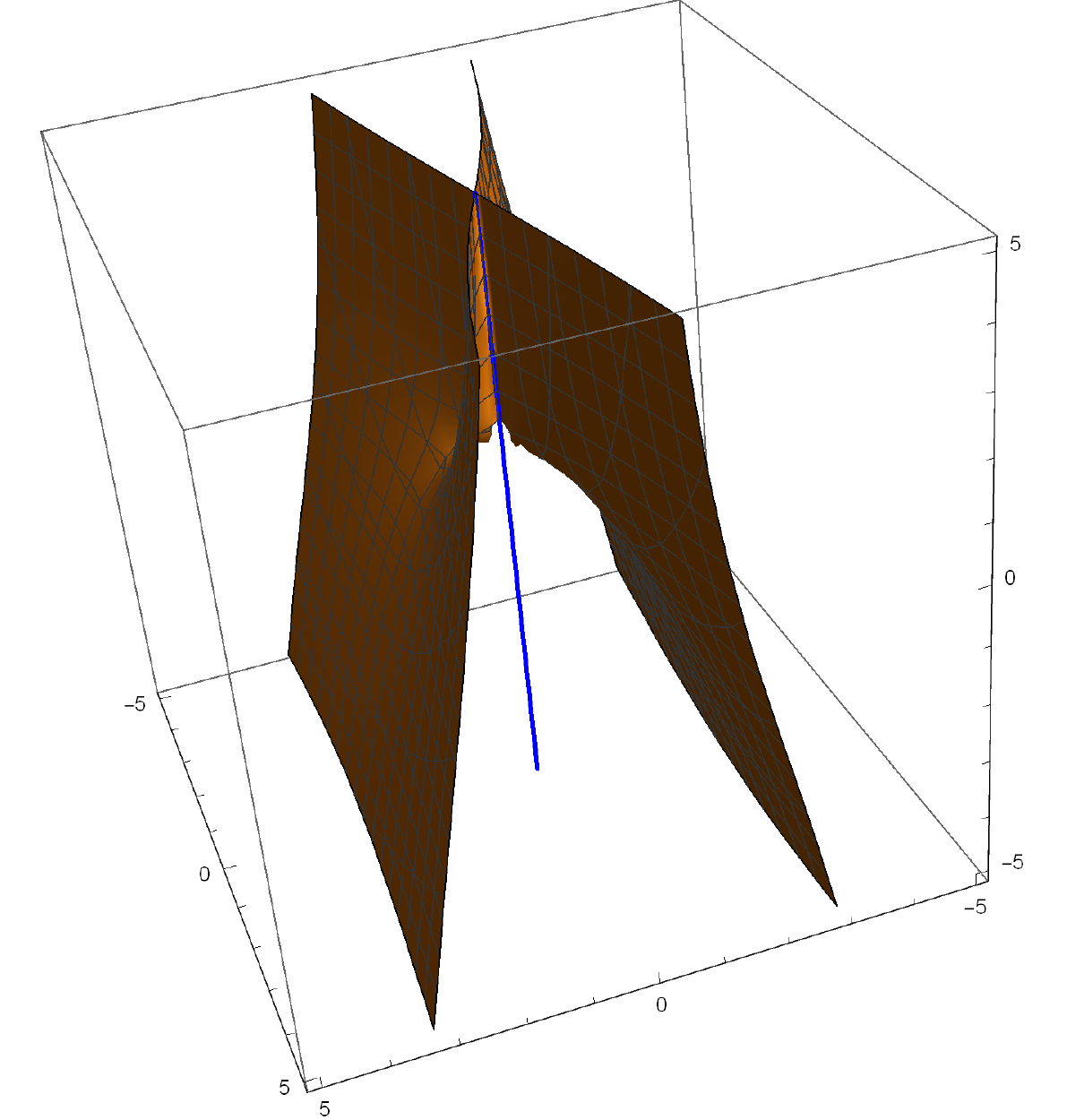}}
	\caption{\small{Phase portrait of system \eqref{sis-chen-caso1-impasse} (left) and surface $M_{4}$ (right). 
			The pseudo impasse set $\mathcal{I}_{M}$ is denoted in blue.}}\label{fig-exemplo-chen}
\end{figure}

\subsection{Constrained systems in higher dimensions and flows on invariant hypersurfaces}

In this section we discuss how the previous problems can be extended to higher dimensions. 
In order to do this, we illustrate our discussion by means of an example.

The Fisher-Kolmogorov Equation was introduced in \cite{Fischer} and it is a model for populational dynamics. 
Such equation is given by
$
u_t = u_{xx} + u - u^{3}.
$
In \cite{DeeSaarloos} the authors proposed the Extended Fisher-Kolmogorov Equation (EFK-equation for short) given by
$u_t=-\gamma u_{xxxx}+u_{xx} + u - u^{3}, \ \gamma > 0.
$

Observe that for $\gamma = 0$, EFK-equation becomes the regular Fisher-Kolmogorov equation. 
For stationary solutions (solutions in which do not depend on time $t$), the EFK-equation reduces to
$$
-\gamma u_{xxxx}+u_{xx} + u - u^{3} = 0, \ \gamma >0.
$$

Applying some transformations and changes of variables, we can express the stationary solutions of 
the EFK-equation by means of the polynomial system
\begin{equation}\label{EFK-system}
\dot{x} = y, \ \dot{y} = z, \ \dot{z} = w, \ \dot{w} = x - qz - x^{3},
\end{equation}
where $(x,y,z,w)\in \mathbb{R}^{4}$ and $q\in\mathbb{R}$ is negative.

In \cite{LlibreMessiasSilva2011} the authors proved that
$
H(x,y,z,w) = \frac{qy^{2} - x^{2} - z^{2}}{2} + \frac{x^{4}}{4} + wy
$
is a first integral for \eqref{EFK-system} and therefore $H^{-1}(k)$ defines an invariant algebraic 
hypersurface. For $k\neq 0$ and $k\neq -\frac{1}{4}$, $H^{-1}(k)$ is a smooth 3-dimensional hypersurface. 
Note that $H$ is written in the form $H(x,y,z,w) = f(x,y,z)w - g(x,y,z)$, and therefore we can 
define the sets $\mathcal{G}_{M}$ and $\mathcal{I}_{M}$, where $M = H^{-1}(k)$. Moreover, 
Proposition \ref{prop-superficie} is still true for $n$-dimensional hypersurfaces.

The authors also proved in \cite{LlibreMessiasSilva2011} that the flows on $H^{-1}(k)$ can be 
described by the constrained system
\begin{equation}\label{EFK-system-impasse}
\dot{x} = y, \ \dot{y} = z, \ 4y\dot{z} = 4k + 2(x^{2} + z^{2} - qy^{2}) - x^{4},
\end{equation}
whose adjoint vector field is
\begin{equation}\label{EFK-system-adjoint}
\dot{x} = 4y^{2}, \ \dot{y} = 4yz, \ \dot{z} = 4k + 2(x^{2} + z^{2} - qy^{2}) - x^{4}.
\end{equation}

The ideas used to prove this fact are the same used in the proof of Theorem \ref{teo-fluxo-impasse}. 
Note that the impasse surface of \eqref{EFK-system-impasse} is the $xz$-plane on $\mathbb{R}^{3}$, 
and the projection of the pseudo impasse set 
$\mathcal{I}_{M} = \Big{\{}(x,y,z,w) | y = 0 = \frac{qy^{2} - x^{2} - z^{2}}{2} + \frac{x^{4}}{4} + k, w\in\mathbb{R}\Big{\}}$ 
is a one-dimensional curve of equilibrium points for the adjoint vector field \eqref{EFK-system-adjoint}.

Although Proposition \ref{prop-superficie} and Theorem \ref{teo-fluxo-impasse} can 
be easily extended in higher dimensions, it is harder to generalize Theorem \ref{teo-fluxo-2-hiperbolico} 
and Proposition \ref{prop-singularidades-impasses}. In the 2-dimensional case, the projection of the 
pseudo impasse set is a set of equilibrium points contained in the impasse curve. In higher dimensions,
 if $M$ is a $n$-dimensional hypersurface embedded in $\mathbb{R}^{n+1}$, then $\mathcal{I}_{M}$ 
 and $\mathcal{I}$ will be a $(n-1)$-dimensional submanifold of $M$ and $\mathbb{R}^{n}$, respectively. 
 Moreover, the projection of $\mathcal{I}_{M}$ is a $(n-2)$-dimensional submanifold of $\mathcal{I}$ and 
 all their points are equilibrium points for the $n$-dimensional constrained system. This case require a detailed analysis.

\section{Unfolding minimal sets in 1-parameter families of invariant algebraic surfaces}\label{sec-spp}

In this section we consider 1--parameter families of smooth vector fields
\begin{equation}\label{1pf}
X_{\e}(x,\mathbf{y})=\Big{(}\alpha_{\e}(x,\mathbf{y}), \beta_{\e} (x,\mathbf{y})\Big{)}, \quad \e \downarrow 0,
\end{equation}
where $x\in\mathbb{R}$ and $\mathbf{y}\in\mathbb{R}^{m}$. Assume that $H_{\e}(x,\mathbf{y})$ 
is a smooth first integral and denote
\begin{equation}\label{ais}
M_{\e}=H_{\e}^{-1}(0),
\end{equation}
the correponding invariant surface. We also assume that the vector field and the first integral vary 
smoothly with respect to the parameter $\e$.

The trajectories of \eqref{1pf} are the solutions of
\begin{equation}\label{sys-family-general}
\dot{x} = \alpha_{\e}(x,\mathbf{y}), \ \dot{\mathbf{y}} = \beta_{\e}(x,\mathbf{y}).
\end{equation}

Using singular perturbation theory and Fenichel Theorem \ref{teo-Fenichel} as main tools, we  study 
the persistence of equilibrium points and periodic orbits using normal hyperbolicity.

Since $H_{\e}$ is a first integral of \eqref{sys-family-general}, for each $\e$
sufficiently small we have
$\alpha_{\e}(H_{\e})_x+
 \beta_{\e}(H_{\e})_{\mathbf{y}}  = 0.$
Thus we can rewrite system \eqref{sys-family-general} as
\begin{equation}\label{sys-first-integral}
\dot{x} = -\Bigg{(}\frac{\beta_{\e}(H_{\e})_{\mathbf{y}}}{(H_{\e})_x}\Bigg{)}, \ \dot{\mathbf{y}} = \beta_{\e}(x,\mathbf{y}).
\end{equation}

The smooth dependence on $\e$ implies that $\alpha_{\e}\rightarrow \alpha_{0}$, $\beta_{\e}\rightarrow \beta_{0}$
 and $H_{\e}\rightarrow H_{0}$ in the $C^1$-topology.

If $\varphi_{0}(t) = \big{(} x_{0}(t), \mathbf{y}_{0}(t)\big{)}$ is a solution of \eqref{sys-family-general} 
satisfying that $H_0(\varphi_{0}(0))=0$, then $\varphi_{0}(t)$ is a solution for the 
differential-algebraic equation
\begin{equation}\label{diff-algebraic-eq}
0 = H_{0}(x,\mathbf{y}), \ \dot{\mathbf{y}} = \beta_{0}(x,\mathbf{y}).
\end{equation}

Conversely, consider the differential-algebraic equation \eqref{diff-algebraic-eq} and let 
$p_{0}\in\mathcal{NH}(M_{0})$. There is a neighborhood $V\subset \mathcal{NH}(M_{0})$ of 
$p_{0}$ such that if $\varphi_{0}(t) = \big{(}x_{0}(t),\mathbf{y}_{0}(t)\big{)}$,  $\varphi_{0}(0) = p_{0}$
 is a solution of \eqref{diff-algebraic-eq} in $V$, then $\varphi_{0}(t)$ is a solution of
\begin{equation}\label{sys-first-integral0}
\dot{x} = -\Bigg{(}\frac{\beta_{0}(H_0)_\mathbf{y}}{(H_0)_x}\Bigg{)},
\ \dot{\mathbf{y}} = \beta_0(x,\mathbf{y}).
\end{equation}

The neighborhood $V$ is the neighborhood in which $(H_0)_x\neq0$. Differentiating 
$H_0(x, \mathbf{y}) = 0$ with respect to $t$, we get
$0 = \dot{x}(H_0)_x + \dot{\mathbf{y}}(H_0)_\mathbf{y}, \ \dot{\mathbf{y}} = \beta_{0}(x,\mathbf{y}),
$
as desired. We  summarize these previous facts in the following Proposition.

\begin{proposition}\label{main-theorem}
Let $H_{0}:\mathbb{R}^{m+1}\rightarrow\mathbb{R}$ be a smooth first integral of system
 \eqref{sys-family-general} with $\e=0$ and  $p_{0}\in \mathcal{NH}(M_{0})$ be a normally
  hyperbolic point. Then there exists a neighborhood $V\subset \mathcal{NH}(M_{0})$ of $p_{0}$ such that
$\varphi_{0}(t) = \big{(} x_{0}(t), \mathbf{y}_{0}(t)\big{)}, \varphi_{0}(0) = p_{0},$
is a solution of the slow system \eqref{diff-algebraic-eq} in $V$ if, and only if, $\varphi_{0}(t)$ 
is a solution of \eqref{sys-first-integral0}
\end{proposition}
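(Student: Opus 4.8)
The plan is to prove the equivalence by reading both implications off the algebraic constraint $H_0\equiv 0$ via the chain rule, once the neighborhood $V$ has been fixed. The only geometric input needed is an interpretation of the hypothesis $p_0\in\mathcal{NH}(M_0)$. First I would view the differential--algebraic equation \eqref{diff-algebraic-eq} as the reduced problem \eqref{sis-lento} of a slow--fast system \eqref{sis-slowfast} whose fast variable is the scalar $x$ and whose slow variables are $\mathbf{y}$; then the critical manifold is exactly $S_0=\{H_0=0\}=M_0$ and the fast linearization at a point is the $1\times 1$ matrix $(H_0)_x$. Consequently normal hyperbolicity of $p_0$ is equivalent to $(H_0)_x(p_0)\neq 0$, and by smoothness I would take $V\subset\mathcal{NH}(M_0)$ to be a neighborhood of $p_0$ on which $(H_0)_x\neq 0$. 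This is precisely the set on which the right--hand side of \eqref{sys-first-integral0} is defined.

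For the implication from \eqref{diff-algebraic-eq} to \eqref{sys-first-integral0}, suppose $\varphi_0(t)=(x_0(t),\mathbf{y}_0(t))$ is a solution of the slow system in $V$. Then $H_0(\varphi_0(t))\equiv 0$, and differentiating this identity with respect to $t$ gives $\dot{x}(H_0)_x+\dot{\mathbf{y}}(H_0)_{\mathbf{y}}=0$ along $\varphi_0$. Substituting the slow equation $\dot{\mathbf{y}}=\beta_0$ and dividing by $(H_0)_x\neq 0$ yields $\dot{x}=-\beta_0(H_0)_{\mathbf{y}}/(H_0)_x$, which is the first equation of \eqref{sys-first-integral0}; the second equation is common to both systems. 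Hence $\varphi_0$ solves \eqref{sys-first-integral0}.

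For the converse, suppose $\varphi_0$ solves \eqref{sys-first-integral0} with $\varphi_0(0)=p_0\in M_0$. Differentiating $H_0$ along $\varphi_0$ and inserting both equations of \eqref{sys-first-integral0} makes the two resulting terms cancel, so $H_0(\varphi_0(t))$ is constant in $t$; since $H_0(p_0)=0$, the constraint $0=H_0$ holds identically along $\varphi_0$, and together with $\dot{\mathbf{y}}=\beta_0$ this shows $\varphi_0$ solves \eqref{diff-algebraic-eq}. I would also record the consistency remark that, because $H_0$ is a first integral of $X_0$, the identity $\alpha_0(H_0)_x+\beta_0(H_0)_{\mathbf{y}}=0$ holds; dividing by $(H_0)_x\neq 0$ on $V$ gives $\alpha_0=-\beta_0(H_0)_{\mathbf{y}}/(H_0)_x$, so on $V$ the system \eqref{sys-first-integral0} is nothing but \eqref{sys-family-general} restricted to the invariant surface.

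The calculations are elementary, so the single point that genuinely requires care is conceptual: recognizing that $x$ plays the fast role, so that the abstract hypothesis $p_0\in\mathcal{NH}(M_0)$ turns into the concrete and checkable nondegeneracy condition $(H_0)_x(p_0)\neq 0$. Once this translation is made, the choice of $V$ and both implications follow immediately from the chain rule, and no estimate beyond continuity of $(H_0)_x$ is needed.
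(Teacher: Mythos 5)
Your proposal is correct and follows essentially the same route as the paper: identify $V$ as a neighborhood where $(H_0)_x\neq 0$ (which is exactly how the paper characterizes $\mathcal{NH}(M_0)$), and obtain both implications by differentiating $H_0\equiv 0$ along the trajectory via the chain rule. Your converse direction, showing directly that the two terms cancel so $H_0$ is conserved by \eqref{sys-first-integral0}, is a minor repackaging of the paper's observation that \eqref{sys-first-integral0} is just \eqref{sys-family-general} rewritten using the first-integral identity, so nothing substantive differs.
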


In other words, Proposition \ref{main-theorem} says that $\varphi_{0}$ is an orbit on the normally hyperbolic
part of the slow manifold if, and only if, $\varphi_{0}$ is an orbit of \eqref{sys-first-integral0} on 
the level $H_{0}=0$.\\

Now consider the singularly perturbed system
\begin{equation}\label{sys-spp}
\e\dot{x} = H_{\e}(x,\mathbf{y}), \ \dot{\mathbf{y}} = \beta_{\e}(x,\mathbf{y}).
\end{equation}

We remark that $M_{0}$ is the slow manifold of \eqref{sys-spp}.
Take $p_{0} = (x_{0}, \mathbf{y}_{0})\in\mathcal{NH}(M_{0})$. Since $H_{\e}\rightarrow H_{0}$ in 
the $C^{1}$-topology we have
$$
H_{\e}(p_{0})\rightarrow H_{0}(p_{0}), \ \frac{\partial^{i} H_{\e}}{\partial x^{i}}(p_{0}) \rightarrow \frac{\partial^{i} H_{0}}{\partial x^{i}}(p_{0}), \ \frac{\partial^{i} H_{\e}}{\partial \mathbf{y}^{i}}(p_{0}) \rightarrow \frac{\partial^{i} H_{0}}{\partial \mathbf{y}^{i}}(p_{0}).
$$

\noindent\textbf{Remark.} Let $N_{\e}$ be the family of locally invariant surface of 
\eqref{sys-spp} given by Fenichel's Theorem \ref{teo-Fenichel} converging to a compact 
subset $N_{0}\subset\mathcal{NH}\big{(}M_{0}\big{)}$. Then there is a compact subset 
$\widetilde{M}_{\e}\subset\mathcal{NH}\big{(}M_{\e}\big{)}$ diffeomorphic to $N_{\e}$, for $\e$ sufficiently small.\\

From now on we denote
 $\mathcal{NH}\big{(}M_{\e}\big{)}= \big{\{}(x,\mathbf{y})\in M_{\e}: (H_{\e})_x\neq 0\big{\}}.$
The next statements aim to relate the dynamics of \eqref{sys-first-integral} on 
$\mathcal{NH}\big{(}M_{\e}\big{)}$ with the dynamics of the singular perturbation problem 
\eqref{sys-spp}. We will always suppose that $\mathcal{NH}\big{(}M_{0}\big{)}$ and $V$ 
is the neighborhood given by Proposition \ref{main-theorem}. The idea is to use the normal 
hyperbolicity  and Fenichel Theorem \ref{teo-Fenichel} to study the persistence of equilibrium 
points and periodic orbits.

\begin{proposition}\label{prop-spp-equilibrio}
	Let $p_{0}\in N_{0}$ be an equilibrium point of \eqref{sys-first-integral} (for $\e = 0$). Then
\begin{description}
  \item[(a)] For $\e > 0$ sufficiently small, there is a sequence of points 
  $p_{\e}\in \mathcal{NH}\big{(}M_{\e}\big{)}$ which converges to $p_{0}$ and, 
  for each $\e$, $p_{\e}$ is an equilibrium point of \eqref{sys-first-integral}.
  \item[(b)] $p_{\e}\in \mathcal{NH}\big{(}M_{\e}\big{)}$ is an equilibrium point of 
  \eqref{sys-first-integral} (for $\e > 0$) if, and only if, $p_{\e}$ is an equilibrium
   point for \eqref{sys-spp}.
\end{description}
\end{proposition}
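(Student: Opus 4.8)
The plan is to dispose of part (b) first, as a direct identification of equilibria, and then feed it into an Implicit Function Theorem argument for part (a).

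For (b), fix $\e>0$ and take $p_\e\in\mathcal{NH}(M_\e)$, so that by the definition of $\mathcal{NH}(M_\e)$ we already have $H_\e(p_\e)=0$ and $(H_\e)_x(p_\e)\neq 0$. A point is an equilibrium of \eqref{sys-first-integral} exactly when $\beta_\e=0$ together with $-\beta_\e (H_\e)_{\mathbf y}/(H_\e)_x=0$; since the first component carries $\beta_\e$ as a factor and $(H_\e)_x\neq 0$, both conditions collapse to $\beta_\e(p_\e)=0$. On the other hand $p_\e$ is an equilibrium of \eqref{sys-spp} exactly when $H_\e(p_\e)=0$ and $\beta_\e(p_\e)=0$, and the first of these already holds because $p_\e\in M_\e$. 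Hence both notions of equilibrium are equivalent to the single scalar/vector condition $\beta_\e(p_\e)=0$, which proves (b).

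For (a), I would hunt for zeros of the map $\Phi(x,\mathbf y,\e)=\big(H_\e(x,\mathbf y),\,\beta_\e(x,\mathbf y)\big)\in\R^{1+m}$ near $(x_0,\mathbf y_0)$. By hypothesis $p_0=(x_0,\mathbf y_0)$ is an equilibrium of \eqref{sys-first-integral0}, so $\beta_0(p_0)=0$, and $p_0\in M_0$ gives $H_0(p_0)=0$; thus $\Phi(p_0,0)=0$. The smooth dependence on $\e$ makes $\Phi$ smooth, so provided $D_{(x,\mathbf y)}\Phi(p_0,0)$ is invertible the Implicit Function Theorem yields a smooth branch $\e\mapsto p_\e$ with $p_\e\to p_0$ and $\Phi(p_\e,\e)=0$, i.e. $H_\e(p_\e)=0$ (so $p_\e\in M_\e$) and $\beta_\e(p_\e)=0$ (so $p_\e$ is an equilibrium of \eqref{sys-first-integral} by part (b)). Continuity of $(H_\e)_x$ keeps $(H_\e)_x(p_\e)\neq 0$ for small $\e$, placing $p_\e\in\mathcal{NH}(M_\e)$ and producing the desired sequence.

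The main obstacle is the invertibility of
\[
D_{(x,\mathbf y)}\Phi(p_0,0)=\begin{pmatrix}(H_0)_x & (H_0)_{\mathbf y}\\ (\beta_0)_x & (\beta_0)_{\mathbf y}\end{pmatrix}.
\]
Since $(H_0)_x(p_0)\neq 0$ by normal hyperbolicity (as $p_0\in N_0\subset\mathcal{NH}(M_0)$), I would expand the determinant by the Schur complement of the $(1,1)$ entry, obtaining $(H_0)_x(p_0)\cdot\det\!\big((\beta_0)_{\mathbf y}-(\beta_0)_x(H_0)_x^{-1}(H_0)_{\mathbf y}\big)$. The second factor is exactly the Jacobian at $p_0$ of the reduced slow flow $\dot{\mathbf y}=\beta_0(\xi(\mathbf y),\mathbf y)$ obtained by solving $H_0=0$ for $x=\xi(\mathbf y)$ as in Proposition \ref{main-theorem}, because $\xi'(\mathbf y)=-(H_0)_{\mathbf y}/(H_0)_x$. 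So the full Jacobian is nonsingular precisely when $p_0$ is a nondegenerate (hyperbolic) equilibrium of the slow flow \eqref{sys-first-integral0}, and I expect securing this nondegeneracy to be the crux: normal hyperbolicity governs only the fast $x$-direction, and a degenerate slow equilibrium need not persist. Under this condition the Implicit Function Theorem closes the argument; alternatively one may route the conclusion through Fenichel's Theorem \ref{teo-Fenichel}, on whose slow manifold the induced dynamics is a regular perturbation of the slow flow so that a hyperbolic equilibrium persists, while any equilibrium of \eqref{sys-spp} with $\e>0$ automatically satisfies $H_\e=0$ and therefore lies on $M_\e$.
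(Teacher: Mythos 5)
Your part (b) is correct and is essentially what the paper leaves implicit when it writes that ``statement (b) follows directly'': on $\mathcal{NH}(M_{\e})$ the condition $H_{\e}(p_{\e})=0$ is automatic and $(H_{\e})_x(p_{\e})\neq 0$, so both notions of equilibrium collapse to $\beta_{\e}(p_{\e})=0$. For part (a), however, you take a genuinely different route. The paper does not use the Implicit Function Theorem at all: it asserts that, since $p_{0}$ is an equilibrium, Fenichel's Theorem \ref{teo-Fenichel} yields equilibria $p_{\e}\in N_{\e}$ of \eqref{sys-spp} converging to $p_{0}$, then notes that $\beta_{\e}(p_{\e})=0$ makes $p_{\e}$ an equilibrium of \eqref{sys-first-integral} and that $(H_{\e})_x(p_{\e})\neq 0$ for small $\e$ by continuity. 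Your formulation via the zero set of $\Phi=(H_{\e},\beta_{\e})$ is sharper, and the obstruction you isolate is real: Theorem \ref{teo-Fenichel} as stated gives persistence of the invariant manifold $N_{\e}$ and its local (un)stable manifolds, not persistence of individual equilibria of the flow on it. That extra step needs exactly the nondegeneracy of $p_{0}$ as an equilibrium of the reduced flow $\dot{\mathbf{y}}=\beta_{0}(\xi(\mathbf{y}),\mathbf{y})$ that your Schur-complement computation produces; normal hyperbolicity only controls the factor $(H_{0})_x\neq 0$, i.e.\ the fast direction.

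So the hypothesis you flag as ``the crux'' is genuinely absent from the statement and is silently assumed in the paper's own proof. A minimal illustration: take $H_{\e}(x,y)=x$ (forcing $\alpha_{\e}\equiv 0$) and $\beta_{\e}(x,y)=y^{2}+\e$; then $p_{0}=(0,0)$ is an equilibrium for $\e=0$ lying in $\mathcal{NH}(M_{0})$, but \eqref{sys-first-integral} has no equilibria at all for $\e>0$, so item (a) fails without a nondegeneracy assumption on the slow equilibrium. In short: your (b) matches the paper; your (a) is a correct proof of the appropriately strengthened statement (adding hyperbolicity, or at least nondegeneracy, of $p_{0}$ for the reduced flow), and that strengthening is what the paper's Fenichel-based argument actually requires as well.
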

\begin{proof}
Since $p_{0}$ is an equilibrium point, it follows from Fenichel's Theorem that for $\e > 0$ 
sufficiently small, there is a sequence of points $p_{\e}\in N_{\e}$ which converges to 
$p_{0}$ such that $p_{\e}$ is an equilibrium point of the singularly perturbed problem 
\eqref{sys-spp}. The manifold $N_{\e}$ is locally invariant and normally hyperbolic for \eqref{sys-spp}.

In particular, for each $\e$ we have $\beta_{\e}(p_{\e}) = \gamma_{\e}(p_{\e}) = 0$ and 
thus $p_{\e}$ is an equilibrium point for \eqref{sys-first-integral}. Since $H_{\e}(p_{\e}) = 0$
 and  $p_{\e}\in N_{\e}$ we have $(H_{\e})_x(p_{\e})\neq 0$ for $\e$ suficiently small. 
 Thus $p_{\e}\in\mathcal{NH}\big{(}M_{\e}\big{)}$ and statement \emph{(a)} is true. 
 Statement \emph{(b)} follows directly.
\end{proof}

For the next result, we will suppose that \eqref{sys-spp} is a singularly perturbed system of the form
\begin{equation}\label{sys-spp-particularcase}
\e\dot{x} = H_{\e}(x,\mathbf{y}), \ \dot{\mathbf{y}} = \beta_{\e}(\mathbf{y}),
\end{equation}
that is, we require that $\beta_{\e}$ does not depend on $x$. This assumption is not a 
simple convenience. In fact, since \eqref{sys-spp} is well defined in the normally hyperbolic 
part of the slow manifold, by the Implicit Function Theorem there is a function $\Phi_{\e}$ 
such that $\Phi_{\e}(\mathbf{y}) = x$.

We also suppose that  \eqref{sys-first-integral} is a system of the form
\begin{equation}\label{sys-first-integral-particularcase}
\dot{x} = -\Bigg{(}\frac{\beta_{\e}(H_{\e})_{\mathbf{y}}}{(H_{\e})_x}\Bigg{)}, \ \dot{y} = \beta_{\e}(\mathbf{y}).
\end{equation}

\begin{proposition}\label{prop-spp-periodico}
Let $\{\varphi_{0}(t)\}\subset V$ be a periodic orbit of \eqref{sys-first-integral-particularcase} (for $\e = 0$). Then
\begin{description}
  \item[(a)] For $\e > 0$ sufficiently small, there is a sequence of orbits 
  $\{\overline{\varphi}_{\e}(t)\}\subset\mathcal{NH}\big{(}M_{\e}\big{)}$ which converges to 
  $\{\varphi_{0}(t)\}$, according Hausdorff distance, such that for each $\e$, 
  $\{\overline{\varphi}_{\e}(t)\}$ is a periodic orbit of \eqref{sys-first-integral-particularcase}.
  \item[(b)] If $\{\psi_{\e}(t)\}\subset \mathcal{NH}(M_{\e})$ is a periodic orbit of
   \eqref{sys-first-integral-particularcase} for $\e > 0$, then $\{\psi_{\e}(t)\}$ is 
   a periodic orbit of \eqref{sys-spp-particularcase}.
\end{description}

\end{proposition}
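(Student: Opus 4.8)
The plan is to exploit the special structure of \eqref{sys-spp-particularcase} and \eqref{sys-first-integral-particularcase}: since $\beta_{\e}$ depends only on $\mathbf{y}$, the slow equation $\dot{\mathbf{y}} = \beta_{\e}(\mathbf{y})$ is autonomous and \emph{identical} in both systems, so both flows project to the same flow in the $\mathbf{y}$-variables. Moreover, a direct computation shows that $H_{\e}$ is a first integral of \eqref{sys-first-integral-particularcase}: differentiating $H_{\e}$ along that system gives $(H_{\e})_x\dot{x} + (H_{\e})_{\mathbf{y}}\dot{\mathbf{y}} = -\beta_{\e}(H_{\e})_{\mathbf{y}} + (H_{\e})_{\mathbf{y}}\beta_{\e} = 0$, so any orbit meeting $M_{\e}$ stays on $M_{\e}$. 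Hence a periodic orbit $\{\varphi_{0}(t)\}\subset V$ of \eqref{sys-first-integral-particularcase} lies on $M_{0}$, its $\mathbf{y}$-component is a closed orbit $\Gamma$ of $\dot{\mathbf{y}} = \beta_{0}(\mathbf{y})$, and on $\mathcal{NH}(M_{0})$ the remaining coordinate is recovered from $x = \Phi_{0}(\mathbf{y})$. By Proposition \ref{main-theorem}, $\{\varphi_{0}(t)\}$ is exactly a periodic orbit of the reduced problem \eqref{diff-algebraic-eq} sitting inside $\mathcal{NH}(M_{0})$.

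For item (a) I would pass from the reduced flow to the full fast--slow flow via Fenichel theory. Taking a compact normally hyperbolic neighborhood $N_{0}\subset\mathcal{NH}(M_{0})$ containing $\{\varphi_{0}(t)\}$ and applying Theorem \ref{teo-Fenichel} to \eqref{sys-spp-particularcase}, one obtains the $C^{r-1}$ family $\{N_{\e}\}$ converging to $N_{0}$ and diffeomorphic to it, on which the induced slow flow is a $C^{r-1}$-small perturbation of the reduced flow. Since $\{\varphi_{0}(t)\}$ is a compact periodic orbit of that reduced flow, its persistence as a nearby periodic orbit $\{\widetilde{\varphi}_{\e}(t)\}\subset N_{\e}$ of \eqref{sys-spp-particularcase} follows from hyperbolicity of the cycle together with the smooth dependence on $\e$. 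Finally, using the diffeomorphism between $N_{\e}$ and the compact piece $\widetilde{M}_{\e}\subset\mathcal{NH}(M_{\e})$ from the Remark preceding the statement, together with the first-integral correspondence of Proposition \ref{main-theorem} read at level $\e$, I would transport $\{\widetilde{\varphi}_{\e}(t)\}$ to a periodic orbit $\{\overline{\varphi}_{\e}(t)\}\subset\mathcal{NH}(M_{\e})$ of \eqref{sys-first-integral-particularcase} converging to $\{\varphi_{0}(t)\}$ in the Hausdorff distance, exactly as in the equilibrium argument of Proposition \ref{prop-spp-equilibrio}.

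For item (b) I would argue in the reverse direction. Given a periodic orbit $\{\psi_{\e}(t)\}\subset\mathcal{NH}(M_{\e})$ of \eqref{sys-first-integral-particularcase}, its $\mathbf{y}$-component solves $\dot{\mathbf{y}} = \beta_{\e}(\mathbf{y})$ and is $T$-periodic, while its $x$-component is slaved by the constraint $x = \Phi_{\e}(\mathbf{y})$ coming from $H_{\e} = 0$. Differentiating this constraint recovers precisely the $x$-equation of \eqref{sys-first-integral-particularcase}, which is exactly the reduced flow of the singularly perturbed system \eqref{sys-spp-particularcase}; hence $\{\psi_{\e}(t)\}$ is a periodic orbit of that reduced flow, and the conclusion is reached by realizing it as the corresponding fast--slow periodic orbit on the slow manifold of \eqref{sys-spp-particularcase}.

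I expect the genuine obstacle to be exactly this last identification, which is also the crux of (a). The manifold $M_{\e}$ is not itself invariant for \eqref{sys-spp-particularcase}, since along $M_{\e}$ the fast equation gives $\e\dot{x} = H_{\e} = 0$ whereas \eqref{sys-first-integral-particularcase} has $\dot{x} = \alpha_{\e}\neq 0$ in general; consequently the cycle produced by Fenichel lives on $N_{\e}$ and matches $\{\psi_{\e}(t)\}$ only after the $O(\e)$ correction in the fast coordinate supplied by the diffeomorphism $\widetilde{M}_{\e}\cong N_{\e}$. The routine decoupling of $\mathbf{y}$ and the first-integral bookkeeping are harmless; the real work is the clean persistence statement for the limit cycle under the $C^{r-1}$ perturbation of the reduced flow and its transfer across that diffeomorphism.
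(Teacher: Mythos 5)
Your overall architecture (decouple the autonomous $\mathbf{y}$-equation, use Fenichel on \eqref{sys-spp-particularcase}, then reconstruct the $x$-coordinate from the constraint $H_{\e}=0$) is the paper's architecture, and your observations about $H_{\e}$ being a first integral of \eqref{sys-first-integral-particularcase} and about $M_{\e}$ not being invariant for \eqref{sys-spp-particularcase} are both correct and relevant. But there is one genuine gap in item (a): you obtain the persistent cycle by taking a compact normally hyperbolic \emph{neighborhood} $N_{0}\subset\mathcal{NH}(M_{0})$ of $\{\varphi_{0}\}$, viewing the induced flow on $N_{\e}$ as a small perturbation of the reduced flow, and then invoking ``hyperbolicity of the cycle'' to continue it. The proposition does not assume the cycle is hyperbolic, and a non-hyperbolic periodic orbit need not survive a $C^{r-1}$-small perturbation of the flow on the slow manifold, so as written your persistence step fails exactly in the cases not covered by an unstated hypothesis. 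The paper sidesteps this entirely by applying Theorem \ref{teo-Fenichel} with $N_{0}=\{\varphi_{0}(t)\}$ \emph{itself} as the compact invariant submanifold: F1 then yields invariant closed curves $N_{\e}$ diffeomorphic to $N_{0}\cong S^{1}$ converging to it, and since the flow on $N_{0}$ has no rest points, neither does the flow on $N_{\e}$ for small $\e$, so each $N_{\e}$ is automatically a single periodic orbit. The only hyperbolicity used is normal hyperbolicity of the slow manifold, i.e.\ $(H_{0})_{x}\neq 0$ on $V$, which you already have.

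A secondary point: transporting the cycle from $N_{\e}$ to $\mathcal{NH}(M_{\e})$ ``using the diffeomorphism between $N_{\e}$ and $\widetilde{M}_{\e}$'' is not by itself an argument, since a diffeomorphism of manifolds does not carry orbits of one flow to orbits of another. The paper's construction is more elementary and is exactly the mechanism you describe in your part (b): take the Fenichel cycle $\varphi_{\e}(t)=(x_{\e}(t),\mathbf{y}_{\e}(t))$, observe that because $\beta_{\e}$ is independent of $x$ the component $\mathbf{y}_{\e}(t)$ is already a periodic solution of $\dot{\mathbf{y}}=\beta_{\e}(\mathbf{y})$, and define $\overline{\varphi}_{\e}(t)=\big(\Phi_{\e}(\mathbf{y}_{\e}(t)),\mathbf{y}_{\e}(t)\big)$ with $\Phi_{\e}$ the graph function of $\mathcal{NH}(M_{\e})$; differentiating $H_{\e}\circ\overline{\varphi}_{\e}\equiv 0$ recovers the $x$-equation of \eqref{sys-first-integral-particularcase}, and $\overline{\varphi}_{\e}\to\varphi_{0}$ because $\Phi_{\e}\to\Phi_{0}$. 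If you replace your persistence step by the ``Fenichel applied to the cycle itself'' trick and make the re-lift explicit in this way, your proof closes.
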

\begin{proof}
	Let $\varphi_{0}(t) = \Big{(}x_{0}(t), \mathbf{y}_{0}(t)\Big{)}$ be a periodic orbit of 
	\eqref{sys-first-integral-particularcase} (for $\e = 0$) in $V$. Thus $\varphi_{0}(t)$
	 is a periodic orbit for \eqref{sys-spp-particularcase}, for $\e=0$, by Proposition
	  \ref{main-theorem}. Note that $\{\varphi_{0}(t)\}$ is compact, then it follows from Fenichel's
	   Theorem that for $\e$ sufficiently small, there is a sequence of periodic orbits 
	   $\varphi_{\e}(t) = \Big{(}x_{\e}(t), \mathbf{y}_{\e}(t)\Big{)}$ for \eqref{sys-spp-particularcase} 
	   which converges to $\varphi_{0}$ and such that $\{\varphi_{\e}\}\subset N_{\e}$.

On $\mathcal{NH}\big{(}M_{\e}\big{)}$, we have $(H_{\e})_x \neq 0$. By the Implict Function 
Theorem, there is a function $\Phi_{\e}$ such that we can write $\mathcal{NH}\big{(}M_{\e}\big{)}$ 
as the graphic of $\Phi_{\e}$. Define $\overline{\varphi}_{\e}(t) = \Big{(}\Phi_{\e}\circ \mathbf{y}_{\e}(t), \mathbf{y}_{\e}(t)\Big{)}$. 
Since $\varphi_{\e}$ is periodic, $\mathbf{y}_{\e}(t)$ is periodic and then 
$\overline{\varphi}_{\e}(t)$ is periodic. Moreover, $\overline{\varphi}_{\e}\subset \mathcal{NH}\big{(}M_{\e}\big{)}$ 
because $\overline{\varphi}_{\e}$ is contained in the graphic of $\Phi_{\e}$. We also have
 that $\overline{\varphi}_{\e}(t)$ is solution of \eqref{sys-first-integral-particularcase} because 
 $\dot{\mathbf{y}} = \beta_{\e}\Big{(}\overline{\varphi}_{\e}(t)\Big{)}$ and 
 $H_{\e}\circ\overline{\varphi}_{\e}(t) = 0$ implies
$\dot{x} = -\frac{\beta_{\e}(H_{\e})_y + \gamma_{\e}(H_{\e})_z}{(H_{\e})_x}.$

Finally, we have that $\{\overline{\varphi}_{\e}\}$ converges to $\{\varphi_{0}\}$ because 
$\Phi_{\e}$ converges to $\Phi_{0}$ and we can write 
$\varphi_{0}(t) = \Big{(}\Phi_{0}(\mathbf{y}_{0}(t)), \mathbf{y}_{0}(t)\Big{)}$. 
For item \emph{(b)}, observe that $\dot{\mathbf{y}} = \beta_{\e}\Big{(}\psi_{\e}(t)\Big{)}$ and $H_{\e}\circ\psi_{\e}(t) = 0$, 
and then $\{\psi_{\e}(t)\}$ is periodic of \eqref{sys-spp-particularcase}. 
This completes the proof.
\end{proof}

\noindent\textbf{Example.}
Consider
$H(x,y,z,\e) = x - \Big{(}\frac{y^2 + z^2}{2}\Big{)} - \e h(x,y)$,  $\beta(y,z,\e) = -z + y(1 - y^2 - z^2) + \e,$ 
$\gamma(y,z,\e) = y+z(1 - y^2 - z^2) + \e,$
where $h:\mathbb{R}^{2}\rightarrow\mathbb{R}$ is smooth. Then $H_{\e}$ is a first integral of
\begin{equation}\label{exe-familia-int-primeira}
\dot{x} = \beta.(y + \e h_{y}) + \gamma.(z + \e h_{z}), \ \dot{y} = \beta, \ \dot{z} = \gamma.
\end{equation}

For $\e = 0$, by Proposition \ref{main-theorem} the flow on $M_{0}$ is given by the algebraic differential equation
$0 = x - \Big{(}\frac{y^2 + z^2}{2}\Big{)}$, $\dot{y} = -z + y(1 - y^2 - z^2),$  $\dot{z} = y+z(1 - y^2 - z^2).$

Note that there is a stable limit cycle on $M_{0}$ and the origin is an equilibrium point.
 It follows from Propositions \ref{prop-spp-equilibrio} and \ref{prop-spp-periodico} that for $\e > 0$ 
 sufficiently small such compact orbits persist for system \eqref{exe-familia-int-primeira}. 
 This fact also can be checked noticing that the system
$\dot{y} = -z + y(1 - y^2 - z^2)$, $\dot{z} = y+z(1 - y^2 - z^2)$
is structurally stable.

	\section{Acknowledgments}
	Paulo R. da Silva is partially supported by CAPES  and FAPESP. Ot\'{a}vio H. Perez is partially supported by FAPESP. 
	.

\end{document}